\documentclass[11pt]{article}

\usepackage{amsmath,amsthm,amssymb, amsfonts,amscd}
\usepackage{mathtools}
\usepackage{bbm}
\usepackage{mathrsfs}
\usepackage{enumitem} 
\usepackage[utf8]{inputenc}
\usepackage{graphicx}

\paperheight=29.7cm
\paperwidth=21cm
\setlength\textwidth{16cm}
\hoffset=-1in
\setlength\marginparsep{0cm}
\setlength\marginparwidth{0cm}
\setlength\marginparpush{0cm}
\setlength\evensidemargin{2.5cm}
\setlength\oddsidemargin{2.5cm}
\setlength\topmargin{2.4cm}
\setlength\headheight{0cm}
\setlength\headsep{0cm}
\voffset=-1in
\setlength\textheight{23.5cm}

\newtheorem{theorem}{Theorem}[section]
\newtheorem{lemma}[theorem]{Lemma}

\newtheorem{proposition}[theorem]{Proposition}

\theoremstyle{definition}
\newtheorem{definition}[theorem]{Definition}
\newtheorem{remark}[theorem]{Remark}


\newcommand{\Z}{\mathbb{Z}}
\newcommand{\R}{\mathbb{R}}
\newcommand{\N}{\mathbb{N}}

\newcommand{\E}{\mathbb{E}}
\newcommand{\prob}{\mathbb{P}}
\newcommand{\1}{\mathbbm{1}}

\newcommand{\norm}[1]{\left\lVert#1\right\rVert}

\allowdisplaybreaks[1]

\numberwithin{equation}{section}

\usepackage{titlesec}
\titleformat*{\section}{\large\bfseries}
\titlelabel{\thetitle.\quad}
\titleformat{\subsection}[runin]{\normalfont\bfseries}{\thesubsection.}{.5em}{}[.]\titlespacing{\subsection}{0pt}{2ex plus .1ex minus .2ex}{.8em}
\titleformat{\subsubsection}[runin]{\normalfont\itshape}{\thesubsubsection.}{.3em}{}[.]\titlespacing{\subsubsection}{0pt}{1ex plus .1ex minus .2ex}{.5em}

\begin{document}

\title{{Environment viewed from the particle and slowdown for ballistic RWRE in low dimensions}}

\author{
\normalsize{\textsc{Tal Peretz}\footnote{Technion - Israel Institute of Technology. 
E-mail: tal.peretz@campus.technion.ac.il}\ \ \ }}

\maketitle

\begin{abstract}
We consider a random walk in a random environment on $\Z^d$ under ballisticity condition $(T)$. We show the existence of the invariant measure $Q$ with respect to the environment viewed from the particle for $d=2$ and $d=3$, which disproves a conjecture made in \cite{berger2016local} regarding the two-dimensional case. We also prove tail estimates for the Radon-Nikodym derivative $dQ/dP$, where $P$ is the original distribution on the environment. Lastly, we provide nearly sharp tail bounds for regeneration times for $d=3$. 
\newline
\newline
\emph{Keywords and phrases.} Random walks in random environment; ballisticity; equivalence of static and dynamic points of view; slowdown of random walk.
\newline
MSC 2020 \emph{subject classifications.}  60K37, 82D30.
\end{abstract}

\section{Introduction}
\subsection{Background}
Random Walk in a Random Environment (RWRE) is one the central models for random motion in non-homogeneous media. The model is defined by creating a random environment, and then defining a Markov chain that depends on the environment. More formally, denote by $M_1$ the space of all probability measures on $\mathcal E_d = \{\pm e_j\}_{j=1}^{d}$, where $e_j$ are the canonical coordinate vectors for $\Z^d$, and define $\Omega = M_1^{\Z^d}$. An environment is an element $\omega \in \Omega$, and for $z \in \Z^d$ we denote $\omega(z,e)$ the probability assigned to a coordinate vector $e$ by $\omega(z)$. Given $\omega \in \Omega$ and $z \in \Z^d$, we can define a time-homogeneous Markov chain $\{X_n \}_{n =0}^\infty$ on $\Z^d$ starting at $z$ with transition probabilities
\begin{align*}
P_\omega^z(X_{n+1} = y+ e \vert X_n = y) = \omega(y,e) \: \: \forall y \in \Z^d, e \in \mathcal E_d.
\end{align*}
We refer to $P_\omega^z(\cdot)$ as the quenched measure of the random walk. Let $P$ be a probability measure defined on $\Omega$, and let $E$ be its expectation. Averaging the quenched measure over all of the environments defines another distribution for the random walk
\begin{align*}
\prob^z(\cdot) = E\left[P_\omega^z(\cdot)\right],
\end{align*}
which we refer to as the annealed measure. We write $E_\omega^z[\cdot]$ and $\E^z[\cdot]$ for the expectation with respect to the quenched and annealed measure, respectively. 
\subsubsection{Ballisticity}
In this paper, we will consider RWRE satisfying
\begin{itemize}
\item $P$ is an i.i.d. measure: $P = \nu^{\Z^d}$ for some measure $\nu$ on $M_1$.
\item $P$ is uniformly elliptic: there exists $\kappa > 0$ such that for all $x\in \Z^d$, 
\begin{align*}
P\left(\min_{e \in \mathcal E_d} \omega(x,e)>\kappa \right) = 1.
\end{align*}
\end{itemize}
 Under these conditions, in \cite{sznitman1999law, zeitouni2004, zerner2002non} the authors proved a law of large numbers:
\begin{align} \label{eq: LLN}
\lim_{n \to \infty}\frac{X_n}{n}= \mathbbm v, \quad \prob^0-\text{a.s.}
\end{align}
However, it still remains open whether $\mathbbm v$ is deterministic and non-vanishing. When $\mathbbm v$ is a $\prob$-a.s. non-zero constant, we say the random walk is ballistic. There has been a research focus in studying ballistic RWRE and answering questions surrounding \eqref{eq: LLN}. In \cite{sznitman2001class, sznitman2002effective}, Sznitman introduced a criterion for ballisticity. For $A \subset \Z^d$, define the hitting time
\begin{align*}
 T_A =\inf \{n \geq 0:  X_n \in A \}.
\end{align*}
For $L>0$ and $\ell \in S^{d-1}= \{z \in \R^d: \vert z \vert =1\}$, by abuse of notation we write
\begin{align*}
T_L = T^{(\ell)}_L = \inf \{n \geq 0:  \langle X_n,   \ell \rangle  \geq L \}.
\end{align*}
\begin{definition} \label{def: condt}(Sznitman \cite{sznitman2002effective})
We say $P$ satisfies condition $(T)$ in direction $\ell_0 \in S^{d-1}$ if for every $\ell \in S^{d-1}$ in some open neighborhood of $\ell_0$ there exist positive constants $C$ and $c$ such that
\begin{align*}
\prob^0( T^{(-\ell)}_L <  T^{(\ell)}_L) \leq Ce^{-cL},
\end{align*}
for every $L>0$.
\end{definition} 
Condition $(T)$ has two other equivalent formulations, one being an effective criterion, see \cite{guerra2020proof,sznitman2002effective}, and the other a moment assumption on regeneration times, see \cite{sznitman2001class}. This condition was introduced in order to guarantee ballisticity, which the following theorem justifies. 
\begin{theorem} [Sznitman \cite{sznitman2002effective}] \label{thm: condt-lln}
If condition $(T)$ holds for some direction $\ell_0$, then
\begin{align*}
\lim_{n \to \infty} \frac{X_n}{n} = \mathbbm v
\end{align*}
for some deterministic constant $\mathbbm v \in \R^d$ that satisfies $\langle \mathbbm v , \ell_0 \rangle >0$.
\end{theorem}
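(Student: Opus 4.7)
The plan is to adapt the regeneration-time construction of Sznitman and Zerner. Fix the direction $\ell_0 \in S^{d-1}$ from the hypothesis, and define the regeneration times $\tau_1 < \tau_2 < \cdots$ in direction $\ell_0$ as the successive times at which $\langle X_n, \ell_0 \rangle$ attains a running maximum that is never subsequently fallen below. Under uniform ellipticity and condition $(T)$, the tail estimate $\prob^0(T^{(-\ell)}_L < T^{(\ell)}_L) \leq Ce^{-cL}$, iterated after each failed attempt at regeneration, shows that a regeneration succeeds with positive probability. A geometric-trials argument then gives $\tau_k < \infty$ almost surely for every $k$, and $\langle X_{\tau_k}, \ell_0 \rangle \to \infty$.

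The key step, and the main obstacle, is upgrading this qualitative statement to the quantitative bound $\E[\tau_2 - \tau_1] < \infty$. I would prove this by decomposing the event $\{\tau_2 - \tau_1 > t\}$ according to whether the walk (i) takes a long time to attempt a new $\ell_0$-maximum or (ii) makes such an attempt but then backtracks far in the $-\ell_0$ direction; part (i) is controlled by a geometric number of failed attempts, while part (ii) is controlled directly by the exponential tail of Definition \ref{def: condt}. In fact this argument yields all polynomial moments of $\tau_2 - \tau_1$, which is the well-known equivalent formulation of $(T)$ referenced in the excerpt. Simultaneously, the i.i.d.\ structure of the environment combined with the fact that after $\tau_k$ the walk never revisits a site of $\ell_0$-coordinate smaller than $\langle X_{\tau_k}, \ell_0 \rangle$ shows that the blocks $(X_{\tau_{k+1}} - X_{\tau_k}, \tau_{k+1} - \tau_k)_{k \geq 1}$ form an i.i.d.\ sequence under $\prob^0$.

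With these ingredients in hand, the classical strong law of large numbers applied to the i.i.d.\ sequence yields
\[
\frac{X_{\tau_n}}{\tau_n} \xrightarrow[n \to \infty]{} v := \frac{\E[X_{\tau_2} - X_{\tau_1}]}{\E[\tau_2 - \tau_1]}, \qquad \prob^0\text{-a.s.,}
\]
and the moment bound on $\tau_{k+1} - \tau_k$ controls the fluctuations between successive regeneration epochs, upgrading this to $X_n/n \to v$ almost surely. This identifies the a.s.\ limit in \eqref{eq: LLN} as the deterministic vector $v$ just displayed. Finally, $\langle v, \ell_0 \rangle > 0$ is immediate: by construction $\langle X_{\tau_2} - X_{\tau_1}, \ell_0 \rangle \geq 1$, while the denominator is finite, so $\langle v, \ell_0 \rangle \geq 1/\E[\tau_2 - \tau_1] > 0$.
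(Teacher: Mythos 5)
This theorem is quoted from Sznitman \cite{sznitman2002effective} and not reproved in the paper, so I am judging your sketch against the known argument. Your overall architecture is the right one (regeneration times, i.i.d.\ blocks by Theorem \ref{thm: regen}, strong law along $\tau_n$, interpolation), and your deduction of $\langle v,\ell_0\rangle>0$ from $\E[\tau_2-\tau_1]<\infty$ is fine. But the step you yourself flag as ``the key step'' --- proving $\E[\tau_2-\tau_1]<\infty$ --- is not established by the argument you give, and this is precisely where all the difficulty of the theorem lives.

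Your decomposition controls the \emph{number} of failed regeneration attempts (geometric) and the \emph{spatial} backtracking (exponential, directly from Definition \ref{def: condt}), but neither of these controls the \emph{duration} of an attempt. Condition $(T)$ says nothing directly about time: the walk can remain in a region of diameter $O(\log u)$ for time $u$ without ever backtracking macroscopically, which is exactly the trap/slowdown mechanism this paper is about. Indeed, by \eqref{eq: slowdown-lower} one has $\prob^0(\tau_1>u)\geq Ce^{-c(\log u)^d}$ in the nestling case, so no argument can produce an exponential (or even stretched-exponential in $u$) tail for $\tau_1$; and the naive route --- bound $\sup_{n\leq\tau_1}|X_n|$ via $(T)$ and then use uniform ellipticity to force an exit from a confining ball --- only yields $\prob^0(\tau_1>u)\leq Ce^{-c(\log u)^{\gamma}}$ with $\gamma\leq 1$, which does not even give a finite first moment. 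Obtaining a tail bound with exponent $\alpha>1$ in $(\log u)^{\alpha}$, as in Theorem \ref{thm: sub-slowdown}, is the content of Sznitman's slowdown analysis in \cite{sznitman2000slowdown} and requires a multi-scale renormalization of atypical quenched exit estimates, not a geometric-trials argument. As written, your proof has a genuine gap at this point; the rest goes through once that estimate is supplied.
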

At the moment, condition $(T)$ is the most general assumption that implies ballisticity, and it is even conjectured to be equivalent to ballisticity, see \cite{sznitman2002effective}. There has been a research focus in generalizing this condition and relaxing the exponential decay assumption in Definition \ref{def: condt}, see \cite{berger2014effective, drewitz2011conditions, drewitz2012quenched, guerra2020proof}.

\subsubsection{Environment Viewed from the Particle }
One of the major tools in studying RWRE is an auxiliary Markov chain on the space of environments $\Omega$ called the environment viewed from the particle
\begin{align*}
\overline \omega_n = \sigma_{X_n}\omega = \omega(X_n + \cdot) \: \: \text{for}\: \:  n \in \N_0,
\end{align*} 
where $\sigma_x$ for $x \in \Z^d$ denotes the canonical shift on $\Omega$. One of the main technical difficulties with RWRE is that the random walk is not a Markov chain under the annealed distribution. An advantage of $\overline \omega_n$ is that under $\prob$ it is a Markov chain with compact state space $\Omega$, initial distribution $P$, and transition kernel
\begin{align*}
\mathcal R f(\omega) = \sum_{e \in \mathcal E_d} \omega(x,e) f (\sigma_e\omega)
\end{align*}
for $f: \Omega \to \R$ bounded and measurable. We say that a probability measure $Q$ on $\Omega$ is invariant with respect to the point of view of the particle if
\begin{align*}
\int_\Omega \mathcal R g(\omega) dQ(\omega) = \int_\Omega g(\omega) dQ(\omega),
\end{align*}
for every bounded continuous $g: \Omega \to \R$. Invariant measure are not difficult to find, but are only useful if they are equivalent to $P$. When such a measure exists, we say the static and dynamic points of view of the environment are equivalent. Such invariant measures have been used to prove central limit theorems and law of large numbers, see \cite{alili1999asymptotic, bolthausen2002static, bolthausen2012ten,drewitz2014selected,kozlov1985method, zeitouni2004}. Consequently, there is major interest in finding invariant measures which are equivalent to $P$. 

In \cite{bolthausen2002static}, Bolthausen and Sznitman showed that $Q$ exists for certain high-dimensional ballistic RWRE. In \cite{berger2016local}, Berger, Cohen and Rosenthal extended their result by proving that for $d \geq 4$ and under condition $(T)$, the static and dynamic points of view of the environment are equivalent. For both papers, the methods relied on bounding the number of intersections of independent random walks in the same environment. Intersection estimates often appear in RWRE, since small number of intersections is a proxy for mixing of the random walk. Due to the small number of intersections in high dimensions, \cite{berger2016local} and  \cite{bolthausen2002static} were able to prove the existence of $Q$ for $d \geq 4$. In  \cite{berger2016local}, the authors also conjectured that for $d=2$, where there are many intersections, there is no invariant measure equivalent to $P$. 
\subsubsection{Slowdown Estimates}

Another tool in studying ballistic RWRE is regeneration times, which where introduced in \cite{sznitman1999law} for the purpose of proving \eqref{eq: LLN}.

\begin{definition}
Fix $\ell \in S^{d-1}$. We define regeneration times for $\{ X_n \}$ in direction $\ell$ as $\tau_0= 0$, and for $j \in \N$
\begin{align*}
\tau_{j} = \inf \{n > \tau_{j-1}: \langle X_k, \ell \rangle < \langle X_n, \ell \rangle \: \text{for all} \: k<n \: \text{and}\: \langle X_k, \ell \rangle \geq \langle X_n, \ell \rangle \: \text{for all} \: k >n \},
\end{align*}
where we use the convention $\inf \varnothing = \infty$.
\end{definition}
The following theorem summarizes the main properties of regeneration times we will use in this paper.

\begin{theorem}[Sznitman, Zerner \cite{sznitman2002effective},\cite{sznitman1999law}] \label{thm: regen}
Assume $P$ is uniformly elliptic, i.i.d. and satisfies condition $(T)$ in direction $\ell$. Then 
\begin{enumerate}
\item $\prob$-a.s. there exist infinitely many regeneration times $\tau_1 < \tau_2 <\cdots$.
\item Under $\mathbb{P}$, the sequence$\{(\tau_{k+1}-\tau_k, X_{\tau_{k+1}} - X_{\tau_k}) \}_{k \in \N}$ is i.i.d., and independent of $(\tau_1, X_{\tau_1})$.
\item There exists a constant $C>0$ such that for all $u>0$, $\prob(\tau_2 - \tau_1 > u) \leq C \prob(\tau_1 > u )$.
\end{enumerate}  
\end{theorem}
The theorem provides an i.i.d. structure to the random walk, and so questions regarding limit theorems reduces to understanding the moments of the regeneration times. For example, Sznitman and Zerner in \cite{sznitman1999law} showed the random walk is ballistic when $\E[\tau_2 - \tau_1] < \infty$, and Sznitman proved in \cite{sznitman1999slowdown} an annealed invariance principle when $\E[(\tau_2 - \tau_1)^2] <\infty$. Berger and Zeitouni \cite{berger2008quenched}, and independently Rassoul-Agha and Sepp\"{a}l\"{a}inen \cite{Rassoul2009functional}, proved a quenched invariance principle  for ballistic RWRE assuming high enough moments of the regeneration times. For this reason, there has been much effort in understanding the tail behavior of regeneration times. In \cite{sznitman1999slowdown, sznitman2000slowdown}, Sznitman showed that they are dominated by traps, atypical pockets in the environment where the random walk is likely to spend a long time. We define the local drift at $x$ for the environment $\omega$ by
\begin{align*}
d(x,\omega) = E_\omega^x[X_1 - X_0] = \sum_{e \in \mathcal E_d}\omega(x,e)e.
\end{align*}
Following \cite{sznitman2000slowdown}, we say $P$ is nestling if $0$ is in the interior of $K_0$, where $K_0$ be the convex hull of the support of the law of $d(0,\omega)$. Sznitman showed that when $P$ is nestling,
\begin{align} \label{eq: slowdown-lower}
\prob^0(\tau_1 > u) > Ce^{-c(\log u)^d}
\end{align}
by creating the na\"{i}ve trap. Berger was able to prove a nearly matching upper bound in \cite{berger2012slowdown}: for $d\geq 4$ and for $P$ satisfying condition $(T)$, we have
\begin{align}\label{eq: berger-slowdown}
\prob^0(\tau_1 > u) < Ce^{-(\log u)^{\alpha}}
\end{align}
for every $\alpha <d$.
\subsection{Main results}
In our first result we prove the existence of a unique invariant measure $Q$ for $\Z^2$ and $\Z^3$, which disproves the conjecture made in \cite{berger2016local}.
\begin{theorem}\label{thm: invariant}
Let $d  \geq 2$, and assume $P$ is uniformly elliptic, i.i.d. and satisfies condition $(T)$. Then there exists a unique invariant probability measure $Q$ on $\Omega$ which is invariant with respect to $\mathcal R$ and is equivalent to $P$. Furthermore, we have for $P$-almost every $\omega \in \Omega$
\begin{align*}
\lim_{n \to \infty} \sum_{x \in \Z^d} \left\vert P_\omega^0(X_n=x) - \prob^0(X_n = x) \frac{dQ}{dP}(\sigma_x \omega) \right\vert =0.
\end{align*}
\end{theorem}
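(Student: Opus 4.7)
The strategy I would follow is the Cesàro--martingale approach of \cite{bolthausen2002static, berger2016local}. Let $Q_n$ denote the law of $\overline{\omega}_n$ under $\prob^0$; by translation invariance of $P$ one has $Q_n \ll P$ with explicit density $f_n(\omega) = \sum_{x \in \Z^d} P^{-x}_\omega(X_n = 0)$. I would form the Cesàro averages $\overline{Q}_N = \frac{1}{N}\sum_{n=0}^{N-1} Q_n$ and extract a weak subsequential limit $Q$. Any such $Q$ is automatically $\mathcal R$-invariant, so the core task becomes proving that $\{f_n\}_{n\geq 0}$ is uniformly integrable (ideally $L^2(P)$-bounded). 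This yields $Q \ll P$; equivalence $Q \sim P$ then follows from uniform ellipticity together with ergodicity of the $\Z^d$-shift action on $(\Omega,P)$.

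To bound $\{f_n\}$ in $L^2(P)$ I would exploit the regeneration decomposition available under $(T)$: the regeneration times $\tau_1 < \tau_2 < \cdots$ of \cite{sznitman2001class, sznitman2002effective} have stretched exponential tails, and the walk splits after $\tau_1$ into i.i.d.\ slabs. Following the martingale scheme of \cite{berger2012slowdown, berger2016local}, one reveals the environment one slab at a time and bounds the conditional variance of each martingale increment by the expected number of intersections of two independent RWRE trajectories in the shared slab environment. Summing these bounds across slabs gives $\sup_n \E[f_n^2] < \infty$ provided the total expected intersection count is finite.

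The principal obstacle, which is exactly what led \cite{berger2016local} to conjecture non-existence in $d = 2$, is that in dimensions $2$ and $3$ the expected number of intersections of two ballistic walks in a common environment diverges on the global time scale. The new ingredient I would introduce is to estimate intersections per regeneration slab rather than globally: inside a single slab of duration $\ell$ the number of intersections of two ballistic paths grows only polynomially in $\ell$, and the stretched exponential tail of each $\tau_{k+1}-\tau_k$ ensures that all polynomial moments of slab length are finite. Combining these two facts across the i.i.d.\ slabs after $\tau_1$ should yield a uniform second-moment bound on $f_n$ valid in every $d \geq 2$. The most delicate step is expected to be the joint analysis of the regeneration structures of the two independent walks: one must couple their slab boundaries in a way that both preserves the independence needed for the martingale estimate and assigns each intersection to a single slab.

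Once $f_n$ is shown to converge in $L^1(P)$ to some density $g = dQ/dP$, uniqueness of $Q$ is essentially automatic: any other invariant measure equivalent to $P$ would have a shift-invariant Radon--Nikodym factor with respect to $Q$, hence constant by ergodicity. For the local limit theorem, I would combine this convergence with the Markov structure of $\overline{\omega}_n$ to reduce the $\ell^1(\Z^d)$-sum in the statement to a total variation distance between two probability measures whose difference is controlled by $\E[|f_n - g|]$; this tends to zero as $n \to \infty$, giving the claimed convergence for $P$-a.e.\ $\omega$.
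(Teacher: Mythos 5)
Your overall architecture --- realize $Q_n$, the law of $\overline\omega_n$, as a density $f_n$ against $P$, pass to a (Ces\`aro) weak limit, prove $\sup_n \E[f_n^2]<\infty$ to get $Q\ll P$, and invoke Kozlov's theorem for equivalence and uniqueness --- is exactly the skeleton used here (Section 4 of the paper, following \cite{berger2016local}). The gap is in the step you yourself single out as the new ingredient. Counting intersections slab by slab gives at best $(\text{number of regeneration slabs in a ball of radius }n)\times(\text{polynomial moment of slab length}) = O(n)$ expected intersections of the two walks; and in $d=2,3$ the total expected intersection count genuinely diverges, so your premise ``provided the total expected intersection count is finite'' cannot be met by any bookkeeping. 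Quantitatively, in the McDiarmid step one needs the conditional variance sum $U\lesssim V^{-(d+1)}\cdot E_\omega^0\otimes E_\omega^0[I_{N^2}]$ to be $o(N^{-2d})$ for some $V=N^{2\theta'}$ with $\theta'\le 1$; an $O(N^2)$ intersection bound forces $\theta'>1$, which is vacuous. What actually closes the argument is Lemma \ref{lem: intersection}: $E_\omega^0\otimes E_\omega^0[I_n]\le n^{1/2+\epsilon}$ outside an event of stretched-exponentially small $P$-probability. The crucial factor $n^{1/2}$ does not come from slab-length moments but from the observation of \cite{berger2008quenched} that, conditioned on not yet having intersected, two walks in the same environment are distributed as walks in two \emph{independent} environments; consequently the number of regeneration levels at which the walks meet at all is only $O(n^{1/2+\delta})$, and the polylogarithmic slab-length factors are paid only at those levels. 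Without this re-intersection estimate your martingale variance is too large by a factor of about $n^{1/2}$, which is precisely the margin that separates $d=2,3$ from $d\ge 4$.

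Two smaller points. First, even with the correct intersection bound, the uniform integrability of $f_n$ is not obtained by a single martingale decomposition of $f_n$ along slabs: the expected intersection count still diverges, so one must go through the multiscale semi-local limit theorem (Proposition \ref{prop: heatkernel}, via the large-box estimate, the descending induction, and the bootstrap of Lemmas \ref{lem: box1}--\ref{lem: box3}, and its fixed-box version Lemma \ref{lem: smallbox-upper}) and only then deduce tail bounds and second moments for $f_n$. Second, the a.e.-$\omega$ local limit theorem in the statement does not follow from $L^1(P)$ convergence of $f_n$ alone; it requires applying the quenched heat kernel comparison along the trajectory as in \cite{berger2016local}, which is again the content of Proposition \ref{prop: heatkernel}.
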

\begin{remark}
This theorem was proved in \cite{berger2016local} for $d \geq 4$. For the one-dimensional case, Alili proved in \cite{alili1999asymptotic} the existence of $Q$ whenever the random walk is ballistic.
\end{remark}
Our next result draws a connection between the slowdown effect and the environment viewed from the particle. Heuristically, for an invariant measure $Q$ which is equivalent to $P$ to exist, the random walk should be well mixing in the environment. However, this will not be the case if the random walk spends a large amount of time in a small region. The following theorem validates this intuition by showing that the tail behavior of $dQ/dP$ is dominated by traps in the environment.
\begin{theorem}\label{thm: radon-quantitative}
Assume $P$ is uniformly elliptic, i.i.d. and satisfies condition $(T)$, and let $Q$ be the invariant measure from Theorem \ref{thm: invariant}. For $d \geq 3$, for every $\alpha < d$ there exist constants $C,c>0$ such that
\begin{align*}
\forall u >  0, \: P\left(dQ/dP >  u \right)\leq Ce^{-c(\log u)^\alpha}.
\end{align*}
For $d \geq 2$ and assuming $P$ is nestling, there exist constants $C,c>0$ such that
\begin{align*}
\forall u > 0, \:  P\left(dQ/dP >  u \right)\geq  Ce^{-c (\log u)^d}.
\end{align*}
\end{theorem}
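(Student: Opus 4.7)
The plan is to prove the two bounds separately, both by reducing $dQ/dP$ to walk-level functionals governed by the regeneration structure of Theorem~\ref{thm: regen}. For the \textbf{upper bound} ($d\geq 3$), we follow the construction of $Q$ in the proof of Theorem~\ref{thm: invariant}, which refines the regeneration-based approach of \cite{berger2016local}. The Radon-Nikodym derivative admits a representation as (a constant multiple of) an expected occupation-time functional of the walk during a regeneration interval. Unpacking this carefully yields a pointwise bound of the form
\begin{align*}
\frac{dQ}{dP}(\omega) \leq C \cdot E_\omega^0[\tau_1],
\end{align*}
reflecting the heuristic that the environment chain revisits an environment like $\omega$ essentially only when $X_k = 0$, and the number of such returns before regeneration is at most $\tau_1$. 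It thus suffices to bound the tail of the quenched expectation $E_\omega^0[\tau_1]$. Writing $E_\omega^0[\tau_1] \leq u/2 + E_\omega^0\bigl[\tau_1 \1_{\{\tau_1 > u/2\}}\bigr]$ and applying Markov's inequality,
\begin{align*}
P\bigl(E_\omega^0[\tau_1] > u\bigr) \leq \frac{2}{u} \E\bigl[\tau_1 \1_{\{\tau_1 > u/2\}}\bigr].
\end{align*}
Applying Theorem~\ref{thm: slowdown} for $d=3$ and the analogous result of \cite{berger2012slowdown} for $d\geq 4$, so that $\prob^0(\tau_1 > t) \leq C e^{-c(\log t)^{\alpha'}}$ for any $\alpha' < d$, integration by parts gives $\E\bigl[\tau_1 \1_{\{\tau_1 > u/2\}}\bigr] \leq C u\, e^{-c(\log u)^{\alpha'}}$; choosing $\alpha' \in (\alpha,d)$ yields the desired tail.

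For the \textbf{lower bound} ($d\geq 2$, nestling), we use the naive trap construction from \cite{sznitman2000slowdown}, the same one underlying the matching lower bound \eqref{eq: slowdown-lower} on regeneration times. For $R \geq 1$, let $E_R$ be the cylinder event that every site $x$ in a box of radius $R$ around the origin has local drift $d(x,\omega)$ pointing strictly inward by at least some fixed $\delta > 0$. Nestling combined with the i.i.d.\ property yields $P(E_R) \geq e^{-cR^d}$, while standard potential-theoretic arguments show that on $E_R$ the walk started at $0$ has quenched expected exit time from the box at least $e^{cR}$. Using the regeneration representation of $Q$, this large expected occupation time near the origin transfers to a pointwise density bound
\begin{align*}
\frac{dQ}{dP}(\omega) \geq c\, e^{cR} \quad \text{for $P$-a.e.\ } \omega \in E_R.
\end{align*}
Setting $u = c\, e^{cR}$, so that $R \asymp \log u$, one obtains $P(dQ/dP > u) \geq P(E_R) \geq e^{-c''(\log u)^d}$.

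The \textbf{main obstacle} is converting the abstract regeneration-based construction of $Q$ into the pointwise density bounds above: the upper bound by $E_\omega^0[\tau_1]$ and the matching lower bound $c\, e^{cR}$ on trap environments. This requires carefully unpacking the martingale/occupation-time formula underlying the proof of Theorem~\ref{thm: invariant} and handling an extra averaging factor arising from the fact that the environment chain lives on all of $\Omega$ while slowdown estimates naturally live at the origin.
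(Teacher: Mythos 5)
There is a genuine gap, and it sits exactly where you flag ``the main obstacle'': the pointwise bounds $dQ/dP(\omega)\leq C\,E_\omega^0[\tau_1]$ and $dQ/dP(\omega)\geq c\,e^{cR}$ on the trap event are asserted, not proved, and they do not follow from the regeneration structure in any routine way. The correct identification of the density, which the paper establishes, is $dQ/dP=\lim_n f_n$ (in probability) with $f_n(\omega)=\sum_{z\in\Z^d}P_\omega^z(X_n=0)$; equivalently, any Kac-type regeneration formula for $Q$ produces something like $\frac{1}{\E[\tau_2-\tau_1]}\sum_{y}E_\omega^y\bigl[\#\{k\in[\tau_1,\tau_2):X_k=0\}\bigr]$, a sum over \emph{all starting points} $y$ of walks passing through the origin. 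Your proposed upper bound replaces this sum by the single starting point $y=0$ (up to the factor $\tau_1$), but the part of the environment governing $E_\omega^0[\tau_1]$ (sites ahead of $0$) and the part governing whether the many walks started far behind funnel through $0$ at time $n$ (sites behind $0$) are essentially disjoint, so no pointwise domination of the claimed form is available. Controlling the sum over $y$ is precisely the content of the quenched semi-local limit theorem on boxes whose size is independent of $n$ (Lemma \ref{lem: smallbox-upper}), combined with the identity $\sum_z\prob^z(X_n\in\Delta)=\vert\Delta\vert$ and the annealed local CLT (Lemma \ref{lem: localCLT}). The same issue affects your lower bound: the trap event alone does not make the density large; one must also show that walks started from a positive fraction of $B(-nv,cn^{1/2})$ actually reach the trap in the right time window, which the paper does by intersecting the trap event $E_L$ with an independent ``typical'' event $F$ built, again, from Lemma \ref{lem: smallbox-upper}.

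There is also a structural gap: you invoke a regeneration representation of $dQ/dP$ before $Q\ll P$ (hence the existence of the density) has been established. The paper resolves this in the right order: it first proves the tail bounds for the explicit densities $f_n$ of $Q_n$ (Proposition \ref{prop: radon-approx}), deduces $\sup_n E[f_n^2]<\infty$, uses Cauchy--Schwarz to get $Q_\infty\ll P$, applies Kozlov's theorem for equivalence and uniqueness, and only then transfers the tail bounds to $dQ/dP$ via convergence in probability of $f_n$. Your Markov-inequality computation for $P(E_\omega^0[\tau_1]>u)$ is correct as far as it goes, but it bounds a quantity that has not been shown to dominate $dQ/dP$.
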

Previously, $\cite{berger2016local}$ showed that $dQ/dP$ has stretched exponential tails for $d \geq 4$. Our bounds improve on this and are new in all dimensions. A corollary of our intersection estimates, Proposition \ref{prop: intersection}, is a new bound on the tail of regeneration times, which settles a conjecture made in \cite{berger2012slowdown}.
\begin{theorem} \label{thm: slowdown}
Let $d = 3$ and assume $P$ is uniformly elliptic, i.i.d. and satisfies condition $(T)$. Then for every $\alpha < 3$ there exists $C>0$ such that
\begin{align*}
\forall u > 0, \: \prob^0(\tau_1 > u) \leq Ce^{-(\log u)^\alpha}.
\end{align*}
\end{theorem}
\begin{remark}
An upper bound for $\prob^0(\tau_1>u)$ was proven for $d=1$ in \cite{dembo1996tail} and for $d \geq 4$ in \cite{berger2012slowdown}, leaving $d=2$ as the only open case.
\end{remark}
\subsection{Remarks on the proofs and structure of the paper}
This article provides two types of results. The first kind, Theorems \ref{thm: invariant} and \ref{thm: slowdown}, extends previous findings to dimensions $d=2$ and $d=3$. The main ingredient for their proofs is Proposition \ref{prop: heatkernel}, a quenched local limit theorem for large boxes. The proof of this proposition is based on a martingale difference argument and relies on bounding the expected number of intersections of two independent random walks. The key step to proving Theorems \ref{thm: invariant} and \ref{thm: slowdown} is Proposition \ref{prop: intersection}, which provides bounds on the number of intersections in lower dimensions. Given this estimate, the proof of Proposition \ref{prop: heatkernel} is similar to \cite{berger2016local}, with two exceptions. First, since we are in lower dimensions the intersection bounds we use are larger than those in \cite{berger2016local}. Second, we provide nearly sharp estimates on the probability the environment has atypically large heat kernels, which we will need for the proof of Theorem \ref{thm: radon-quantitative}.

The second type of results, Theorem \ref{thm: radon-quantitative}, are sharp bounds for the fluctuations of $dQ/dP$ for $d \geq 3$. The proof entails approximating $dQ/dP$ as a sum of heat kernels and then studying their fluctuations. For the upper bound, we estimate the heat kernel for finite boxes, see Lemma \ref{lem: smallbox-upper}. For the lower bound, we place a trap at the origin and show that the probability of hitting the origin is large in this environment, see Lemma \ref{lem: smallbox-lower}.

The paper is structured in the following way. In Section \ref{sec: preliminaries} we define notation and recall previous results which will be used throughout the paper. In Section \ref{sec: intersections} we bound the number of intersection of two random walks. In Section \ref{sec: heat-kernel} we state the semi-local limit theorem and prove Theorem \ref{thm: invariant}. The rest of the section we use the new heat kernel bounds to improve the previous intersection estimates, see Lemma \ref{lem: intersections-upgrade}, which will be used to prove Theorem \ref{thm: slowdown}. In Section \ref{sec: radon-quantitative} we prove Theorem \ref{thm: radon-quantitative}. Since the proofs of Proposition \ref{prop: heatkernel} and Theorem \ref{thm: slowdown} are similar to those in \cite{berger2012slowdown, berger2016local}, we leave it to the appendix.  

A few remarks about the conventions in this paper:
\begin{itemize}
\item Throughout the rest of this text, we denote $c,c',C,C$ to be generic numbers which change from line to line, while numbered constants will be fixed throughout the text.
\item When assuming $P$ satisfies condition $(T)$ in direction $\ell$, without any loss we will assume $\ell= e_1$. In this case, by Theorem \ref{thm: condt-lln} the limiting velocity $\mathbbm v$ satisfies $\langle \mathbbm v, e_1 \rangle >0$.
\end{itemize}

\section{Preliminaries} \label{sec: preliminaries}
In this section we introduce notation and previous results which will be used throughout the paper.\\

For positive sequences $a_n$ and $b_n$, we write  $a_n \ll b_n$ if $\lim_{n \to \infty} a_n/b_n = 0$, $a_n = \xi(b_n)$ if $ \lim_{n \to \infty }b_n/a_n = 0$, $a_n = O(b_n)$ if $\limsup_{n \to \infty} a_n /b_n < \infty$ and $a_n \asymp b_n $ if $0 < \liminf_{n \to \infty} a_n/b_n \leq \limsup_{n \to \infty} a_n/b_n <\infty.$  Note that in our notation, $n^{-\xi(1)}$ is a sequence going to zero faster than any polynomial of $n$. For $j \in \N$, define the sequence $R_j(N) = \exp((\log N)^{\frac{j+2}{j+3}})$. We will often use the fact that for any fixed $\epsilon, \kappa>0$ and $j \in \N$, we have
\begin{align} \label{eq: R-slow}
\log N \ll R^{\kappa}_j(N) \ll R_{j+1}(N) \ll N^{\epsilon}
\end{align}
as $N \to \infty$.\\

Denote $\norm{\cdot}_p$ for $p \in \{1,\infty \}$ to be the usual $\ell^p$ norms, while $\vert \cdot \vert$ will denote the Euclidean norm. For $z \in \Z^d$ and $r > 0$, define
\begin{align*}
B(z,r) = \{x \in \Z^d: \norm{ z-x}_\infty  \leq r \}, \qquad B_2(z,r) = \{x \in \Z^d: \vert x -z \vert \leq r \}.
\end{align*} 
For $L \in \Z$, define the hyperplane 
\begin{align*}
H_L = \{z \in \Z^d: \langle z, e_1 \rangle = L \}.
\end{align*} 
For sets $A,B \subset \Z^d$, define $\text{dist}(A,B) = \min\{ \norm{x-y}_1: x \in A, y \in B \}$ and $z + A = \{x+z : x\in A \}$. \\

Define the asymptotic direction of the random walk
\begin{align*}
\vartheta= \lim_{n \to \infty}\frac{X_n}{ \vert X_n  \vert } \in S^{d-1}.
\end{align*}
Note that by our earlier assumption $\langle \vartheta, e_1 \rangle > 0$. For $z \in \Z^d$ and $N \in \N$, define the parallelogram of side-length $N$ and center $z$ to be
\begin{align*}
\mathcal P(z,N) = \left \{ x\in \Z^d: \vert \langle x-z , e_1 \rangle \vert < N^2, \norm{x - z -\vartheta \cdot \frac{\langle x-z,e_1 \rangle}{\langle \vartheta, e_1 \rangle} }_\infty<N R_5(N) \right\}.
\end{align*}
We also define the middle third of $ \mathcal P(z,N)$
\begin{align*}
\tilde{ \mathcal {P}}(z,N) = \left \{ x\in \Z^d: \vert \langle x-z , e_1 \rangle \vert < N^2/3, \norm{x - z -\vartheta \cdot \frac{\langle x-z,e_1 \rangle}{\langle \vartheta, e_1 \rangle} }_\infty<N R_5(N) /3\right\},
\end{align*}
the boundary of $\mathcal P(z,n)$
\begin{align*}
\partial \mathcal P(z,N) = \{x \in \Z^d \setminus \mathcal P(z,n): \exists y \in \mathcal P(z,N) \: \text{s.t.} \: \norm{x-y}_1=1 \},
\end{align*}
and the right boundary of $\mathcal P(z,n)$
\begin{align*}
\partial^+ \mathcal P(z,N) = \{x \in \partial \mathcal P(z,n): \langle x-z,e_1 \rangle = N^2 \},
\end{align*}
see Figure \ref{fig: parallelogram}.
\begin{figure}[t!]  \centering
    \includegraphics[scale=0.50]{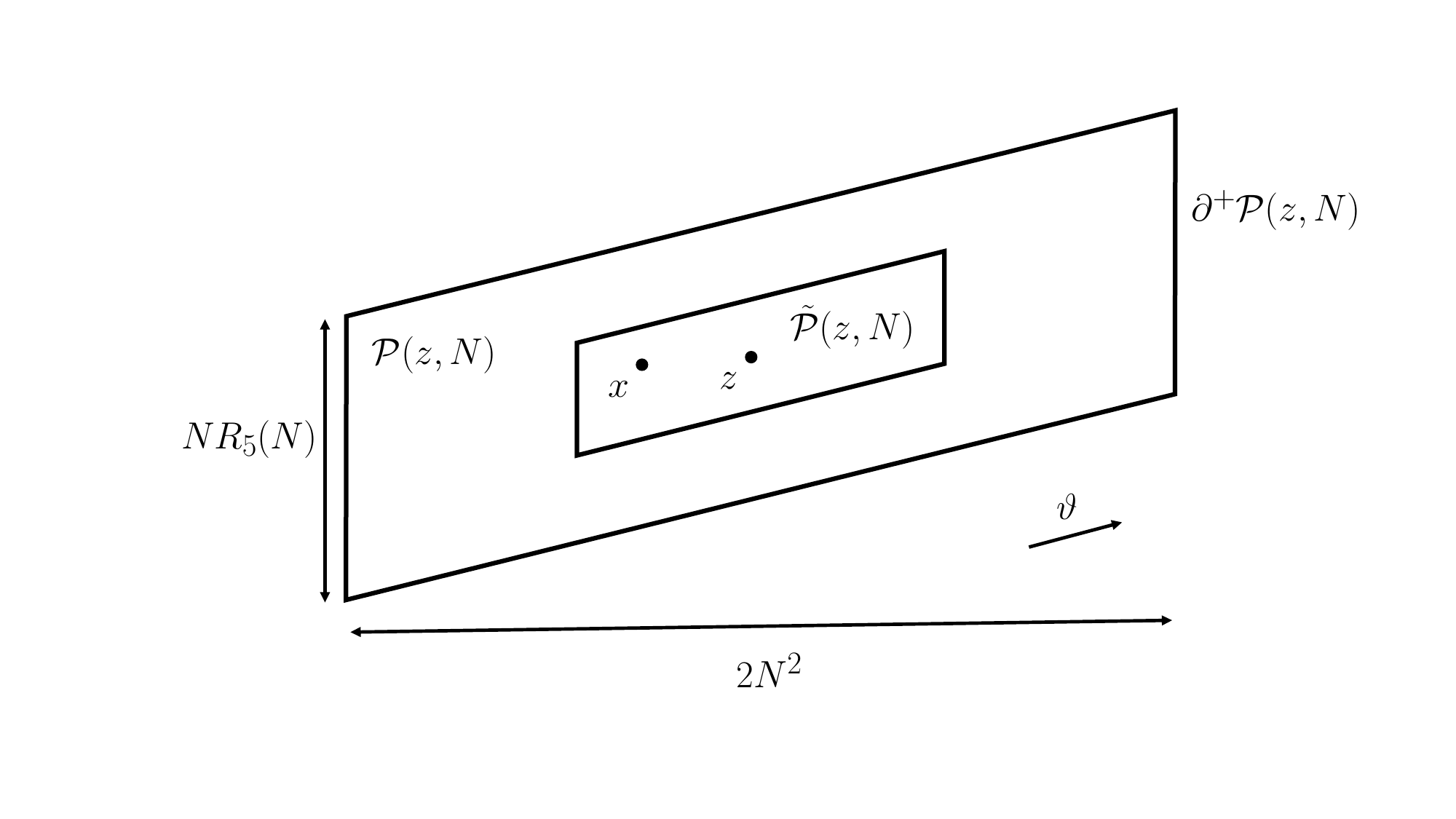}
    \caption{\textit{Assume the random walk satisfies condition $(T)$ in direction $e_1$, which implies it is ballistic in direction $\vartheta $ satisfying $\langle \vartheta, e_1 \rangle > 0$. For any $x \in \tilde{\mathcal P}(z,N)$, the random walk starting at $x$ will exit the parallelogram $ \mathcal P(z,N)$ through $\partial^+ \mathcal P(z,N)$ with very high probability.}}
          \label{fig: parallelogram}
\end{figure}
We will need tail bounds for regeneration times that hold for all relevant dimensions.
\begin{lemma}[{\cite[Theorem 3.5]{sznitman2000slowdown}}]\label{lem: sub-slowdown}
Let $d \geq 2$. Assume $P$ is uniformly elliptic, i.i.d. and satisfies condition $(T)$. Then for any $\alpha < 1 + \frac{d-1}{3d}$, there exist constants $C,c>0$ such that
\begin{align*}
\forall u >0, \: \prob(\tau_1 > u) \leq C e^{-c(\log u)^\alpha}.
\end{align*}
\end{lemma}
\begin{remark}
This bound is not sharp, as can be seen in \eqref{eq: slowdown-lower}. However it holds for $d \geq 2$, which we will need for proving Theorems \ref{thm: invariant} and \ref{thm: slowdown}. To prove Theorem \ref{thm: radon-quantitative}, we will use \eqref{eq: berger-slowdown} and Theorem \ref{thm: slowdown} to assume Lemma \ref{lem: sub-slowdown} holds for $\alpha<d$ for $d \geq 3$.
\end{remark}
We will need estimates for the probability that the random walk does not exit $\mathcal P(z,N)$ through the right, or spends a long time in the parallelogram. 
\begin{lemma}[{\cite[Lemma 3.4]{berger2016local}}] \label{lem: exit-estimates}
Let $d \geq 2$, and assume $P$ is uniformly elliptic, i.i.d. and satisfies condition $(T)$. For every $z \in \tilde {\mathcal P}(0,N)$, every $\alpha <1 + \frac{d-1}{3d}$, and every $j \in \N$, there exist constants $C,c>0$ such that
\begin{align*}
\prob^z(T_{\partial \mathcal P(0,N)} \neq T_{\partial^+ \mathcal P(0,N)}) \leq C\exp(-c R_5(N))
\end{align*}
 and
\begin{align*}
\prob^z \left( \vert T_{\partial \mathcal P(0,N)} - \E^zT_{\partial \mathcal P(0,N)} \vert > R_{j+1}(N)N\right) \leq C\exp(-c \log(R_j(N))^\alpha)
\end{align*}
for every $n \in \N$.
\end{lemma}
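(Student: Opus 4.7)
The plan is to derive both estimates from condition $(T)$ combined with the regeneration decomposition of Theorem \ref{thm: regen}. The core ingredients are: the regeneration structure, which writes $X_{\tau_k}$ as a sum of i.i.d.\ increments with transverse components of exponential tails (by Sznitman's atypical-deviation estimates under $(T)$) and second moment $\E(\tau_2 - \tau_1)^2 < \infty$; and the poly-log-type slowdown bound $\prob(\tau_1 > t) \leq C\exp(-(\log t)^{\alpha''})$ valid for every $\alpha'' < d$ in dimension $d \geq 3$ (by Berger's prior result when $d \geq 4$ and Theorem \ref{thm: slowdown} when $d = 3$). Starting from $z \in \tilde{\mathcal P}(0, N)$ gives us a full $N^2/3$ of room in the $-e_1$ direction and a full $NR_5(N)/3$ of transverse room before hitting a non-right face.

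For the first estimate, split $\{T_{\partial \mathcal P(0, N)} \neq T_{\partial^+ \mathcal P(0, N)}\}$ into the event of exiting through the back face $\{\langle x - z, e_1\rangle = -N^2\}$ and the event of exiting through a transverse face. The back face is controlled directly by condition $(T)$: the probability of ever decreasing $\langle X, e_1\rangle$ by $N^2$ from the starting point is at most $C\exp(-cN^2)$, which is much smaller than $C\exp(-cR_5(N))$ by \eqref{eq: R-slow}. For the transverse faces, decompose using regeneration: the transverse component of $X_{\tau_K}$ is a sum of $K \asymp N^2$ i.i.d.\ centered vectors whose coordinates have exponential tails. Sub-Gaussian concentration then bounds the probability that the transverse norm ever reaches $NR_5(N)$ by $C\exp(-cR_5(N)^2)$, which is stronger than what is required.

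For the second estimate, condition on the very likely event of right exit, so $T_{\partial \mathcal P(0, N)} = \tau_K$ for a random $K \asymp N^2$, and write $T_{\partial \mathcal P(0, N)} = \sum_{k=1}^K (\tau_k - \tau_{k-1})$ (up to boundary corrections). Fix $\alpha < d$, pick $\alpha'' \in (\alpha, d)$, and truncate each summand at level $R_{j'}(N)$ for a $j'$ large enough (depending on $\alpha, \alpha'', j$) that
\begin{equation*}
N^2 \exp\bigl(-(\log R_{j'}(N))^{\alpha''}\bigr) \leq \exp\bigl(-c(\log R_j(N))^\alpha\bigr),
\end{equation*}
which is possible because $\alpha'' > 1$ and $j'$ can be chosen so that $\alpha''(j'+2)/(j'+3) > 1$, making $(\log R_{j'}(N))^{\alpha''}$ grow faster than $\log N$. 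The union bound thus handles the truncation event. On the truncated event, Bernstein's inequality applied to $K \asymp N^2$ bounded i.i.d.\ summands with variance $O(1)$, truncation level $R_{j'}(N) \leq N^{o(1)}$, and deviation $R_{j+1}(N) N$, yields a bound of order $\exp(-c R_{j+1}(N)^2)$, which by \eqref{eq: R-slow} is far smaller than $\exp(-c(\log R_j(N))^\alpha)$ and contributes negligibly. A Chernoff bound handles the fluctuations of the random summation limit $K$ around its deterministic expectation.

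The main obstacle is the delicate choice of truncation scale $R_{j'}(N)$ in the second estimate: it must be large enough for the union bound to absorb the $N^2$ factor into the exponent $(\log R_{j'}(N))^{\alpha''}$, yet small enough that the Bernstein piece remains subleading. This balance exploits the defining property \eqref{eq: R-slow} of the sequence $R_j$, namely that $\log R_j$ grows strictly between polylog and power-of-$N$ scales, which in turn is precisely why the sequence was introduced. The first estimate, by contrast, is comparatively routine once the regeneration structure and condition $(T)$ are combined.
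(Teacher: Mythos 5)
The paper itself gives no proof of this lemma --- it is quoted verbatim as Lemma 3.4 of \cite{berger2016local} --- so your argument is a reconstruction rather than something to compare line by line. Your overall strategy is the right one and is in the spirit of \cite{berger2016local, berger2012slowdown}: condition $(T)$ for the back face, regeneration increments with exponential tails plus Bernstein for the transverse faces, and, for the exit time, a truncation at level $R_{j'}(N)$ so that a union bound over the $\asymp N^2$ regeneration blocks controls the single-long-block contribution while the Bernstein term $\exp(-cR_{j+1}(N)^2)$ is negligible against $\exp(-c(\log R_j(N))^\alpha)$ by \eqref{eq: R-slow}. The quantitative bookkeeping in the truncation step is correct (one should also note that truncation shifts the mean by at most $N^2\,E[(\tau_2-\tau_1)\1_{\{\tau_2-\tau_1>R_{j'}(N)\}}] \ll R_{j+1}(N)N$, but that is routine).

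There are, however, two genuine problems with your inputs. First, invoking Theorem \ref{thm: slowdown} to supply the tail bound $\prob(\tau_1>t)\le C\exp(-(\log t)^{\alpha''})$ for $\alpha''<3$ when $d=3$ is circular: Theorem \ref{thm: slowdown} is proved via Lemma \ref{lem: quenched-mean}, Lemma \ref{lem : intersections-upgrade} and Proposition \ref{prop: heatkernel}, and the proof of Lemma \ref{lem: box2} (hence of Proposition \ref{prop: heatkernel}) already consumes the present lemma. The argument must be staged: first prove the lemma with the restricted range $\alpha<1+\frac{d-1}{3d}$ using only Theorem \ref{thm: sub-slowdown} (which is all that Section 2 actually uses), and only after Theorem \ref{thm: slowdown} is established may one bootstrap to $\alpha<3$, exactly as Lemma \ref{lem: refined-heatkernel} does for the heat-kernel event. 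Second, your proof silently drops $d=2$, while the statement covers all $d\ge 2$ and all $\alpha<d$. This is not a cosmetic omission: the deviation event in the second estimate contains the na\"ive-trap event behind \eqref{eq: slowdown-lower}, so its exponent really is governed by the slowdown tail, and for $d=2$ the only available input is Theorem \ref{thm: sub-slowdown} with exponent below $7/6$; the full range $\alpha<2$ is therefore not reachable by this argument. You should state explicitly which range of $\alpha$ your proof delivers in each dimension.
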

\begin{remark}
The bounds in \cite[Lemma 3.4]{berger2016local} are stated for $d \geq 4$ and have a different sub-exponential bound. By going over their proof and using Lemma \ref{lem: sub-slowdown}, we obtain Lemma~\ref{lem: exit-estimates}. For example, following their proof, we have
\begin{align*}
\prob^z \left( \vert T_{\partial \mathcal P(0,N)} - \E^zT_{\partial \mathcal P(0,N)} \vert > R_{j+1}(N)N\right) &\leq Ce^{-cR^2_{j+1}(N)/R^2_j(N)} \\
&+ \prob(\exists \: 1 \leq k \leq N^2 : \tau_k - \tau_{k-1} > R_j(N)).
\end{align*}
We can now apply Lemma \ref{lem: sub-slowdown} to bound the second term on the right-hand-side.
\end{remark}
\begin{lemma}[{\cite[Lemma 2.16]{berger2016local}}] \label{lem: exit-ball}
Assume $P$ is uniformly elliptic, i.i.d. and satisfies condition $(T)$. Then there exist constants $C,c>0$ such that
\begin{align*}
\prob^z(\norm{X_n - \E^z[X_n]}_\infty > R_5(n) n^{1/2}) \leq C\exp(-c R_5(n)),
\end{align*}
for every $n \in \N$.
\end{lemma}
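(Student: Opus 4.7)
The plan is to reduce the statement to a concentration inequality for an i.i.d.\ sum via the regeneration decomposition of Theorem \ref{thm: regen}. Writing $T_k := \tau_{k+1} - \tau_k$, $Y_k := X_{\tau_{k+1}} - X_{\tau_k}$ for $k \geq 1$, and $K_n := \sup\{k \geq 0 : \tau_k \leq n\}$, Theorem \ref{thm: regen} gives that $\{(T_k, Y_k)\}_{k \geq 1}$ is i.i.d., and uniform ellipticity forces $\norm{Y_k}_\infty \leq T_k$. A classical consequence of condition $(T)$ (Sznitman \cite{sznitman2002effective}) is the stretched-exponential tail $\prob(T_1 > u) \leq C\exp(-c u^\gamma)$ for some $\gamma>0$, so in particular $\mu := \E T_2 < \infty$ and the increments $Y_k$ admit all polynomial moments.

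Setting $m := \lfloor n/\mu \rfloor$, I would use the decomposition
\begin{align*}
X_n - \E^z[X_n] = \sum_{k=1}^{m}(Y_k - \E Y_1) + (K_n - \E^z K_n)\,\E Y_1 + \text{boundary terms},
\end{align*}
where the boundary pieces $X_{\tau_1} - \E^z X_{\tau_1}$, $X_n - X_{\tau_{K_n}}$, and $\sum_{k=m+1}^{K_n-1}(Y_k - \E Y_1)$ are each dominated in sup-norm either by a single regeneration time $T_k$ or, on the good event $\vert K_n - m \vert \leq R_5(n) n^{1/2}$, by a short i.i.d.\ sum. The single-$T_k$ pieces exceed $R_5(n) n^{1/2}$ with probability at most $C\exp(-c(R_5(n) n^{1/2})^\gamma)$, which is far smaller than the required $\exp(-cR_5(n))$; the fluctuation of $K_n$ itself is controlled by the same type of Bernstein estimate as below, applied to $\sum T_k$ instead of $\sum Y_k$, via the renewal inversion $\{K_n \leq m - \delta R_5(n)n^{1/2}\} = \{\sum_{k=1}^{m-\delta R_5(n)n^{1/2}} T_k \geq n\}$.

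The core step is Bernstein's inequality on the truncated centered sum $\sum_{k=1}^m\bigl(Y_k \1_{\{T_k \leq n^\epsilon\}} - \E[Y_k \1_{\{T_k \leq n^\epsilon\}}]\bigr)$ with $\epsilon \in (0,1/2)$ small. A union bound gives $\prob(\exists k \leq m : T_k > n^\epsilon) \leq m \cdot C\exp(-c n^{\epsilon\gamma})$, which decays faster than $\exp(-cR_5(n))$. On the truncated sum, Bernstein yields a deviation probability of order
\begin{align*}
\exp\!\left(-\frac{c\,R_5(n)^2 n}{n\sigma^2 + n^\epsilon R_5(n)n^{1/2}}\right) \leq C\exp(-c R_5(n)^2),
\end{align*}
since the denominator is $O(n)$ for $\epsilon<1/2$, and this comfortably beats $\exp(-cR_5(n))$.

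The main obstacle is really the stretched-exponential tail bound on $T_1$ under condition $(T)$ alone; this is the deep input from Sznitman's analysis of $(T)$ and $(T')$ in \cite{sznitman2002effective}. Once it is available, the rest is a fairly standard renewal-plus-Bernstein scheme, and the fact that the target scale $R_5(n) n^{1/2}$ lies only a subpolynomial factor above the Gaussian scale $n^{1/2}$ is what makes truncation at $n^\epsilon$ comfortable and the final concentration straightforward.
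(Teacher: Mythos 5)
The paper does not actually prove this lemma --- it is quoted verbatim from \cite{berger2016local} --- so there is no internal proof to compare against; I can only assess your argument on its own terms, and it has a fatal gap. The input you call ``a classical consequence of condition $(T)$,'' namely $\prob(\tau_2-\tau_1>u)\leq C\exp(-cu^\gamma)$, is false. This is precisely the slowdown phenomenon that the surrounding paper is about: in the nestling case one has the \emph{lower} bound $\prob^0(\tau_1>u)\geq Ce^{-c(\log u)^d}$ (see \eqref{eq: slowdown-lower}), and the best available upper bounds under $(T)$ are only of the form $e^{-c(\log u)^{\alpha}}$ (Theorems \ref{thm: sub-slowdown} and \ref{thm: slowdown}). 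What condition $(T)$ does give exponential tails for is the \emph{spatial} quantity $\sup_{0\leq k\leq\tau_1}\vert X_k\vert$, hence for the regeneration displacements $Y_k$ --- you have conflated the tail of the regeneration time with the tail of the regeneration displacement. With the true tails, your truncation step collapses: $\prob(\exists k\leq m: T_k>n^{\epsilon})$ is of order $n\,e^{-c(\log n)^{\alpha}}$, which is astronomically larger than the target $e^{-cR_5(n)}=\exp(-c\exp((\log n)^{7/8}))$, since $R_5(n)\gg(\log n)^{\alpha}$ by \eqref{eq: R-slow}. The same heavy tail destroys your control of $K_n$ through the renewal inversion: a single long regeneration block already costs only $e^{-c(\log n)^{\alpha}}$. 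The Bernstein step on the truncated spatial sum is fine, but it is not where the difficulty lives.

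There is also a structural reason your renewal scheme cannot be patched for the statement at a deterministic time $n$: in the nestling case the na\"{i}ve trap keeps the walk within distance $O(\log n)$ of the origin up to time $n$ with probability at least $e^{-c(\log n)^d}$, and on that event $\norm{X_n-\E^z[X_n]}_\infty\asymp n\vert v\vert\gg R_5(n)n^{1/2}$; so no argument can push the deviation probability below $e^{-c(\log n)^d}$, let alone to $e^{-cR_5(n)}$. The estimate that the renewal-plus-Bernstein scheme genuinely proves --- and the one that is used in this paper, e.g.\ through the sets $D_m=B(\E^0[X_{T_m}],R_6(m)m^{1/2})\cap H_m$ in Lemma \ref{lem : intersections-upgrade} --- concerns the \emph{hitting position} $X_{T_m}$ of the hyperplane $H_m$. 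There the number of regeneration blocks needed to cross level $m$ is governed by the increments $\langle Y_k,e_1\rangle\geq 1$, which do have exponential moments, the slowdown of the clock is irrelevant, and your Bernstein computation at scale $R_5(n)n^{1/2}$ goes through to give $e^{-cR_5(n)^2}$. If you rewrite the proof for the exit position from slabs, using exponential moments of $\sup_{k\leq\tau_1}\vert X_k\vert$ in place of the false tail bound on $\tau_1$, the argument becomes correct.
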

The next two lemmas provide us annealed heat kernel bounds.
\begin{lemma} [{\cite[Lemma 2.14]{berger2016local}}] \label{lem: ann-kernel1}
Assume $P$ is uniformly elliptic, i.i.d. and satisfies condition $(T)$. There exists a constant $C>0$ such that for all $n \in \N$, and $x,y,z,w \in \Z^d$ such that $\norm{x-y}_1= 1$ and $\norm{z-w}_1 =1$
\begin{align*}
\prob^z(X_n = x) &\leq C n^{-d/2} \\
\vert \prob^z(X_n = x) - \prob^z(X_{n+1} = y)\vert & \leq C n^{-(d+1)/2}\\
\vert \prob^z(X_n = x) - \prob^w(X_{n+1} = x)\vert & \leq C n^{-(d+1)/2}.
\end{align*}
\end{lemma}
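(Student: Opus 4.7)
The plan is to exploit the regeneration structure of Theorem \ref{thm: regen} and reduce each of the three annealed heat kernel estimates to a classical local central limit theorem for i.i.d.\ sums. First, by Theorem \ref{thm: regen}, the increments $Y_k := X_{\tau_{k+1}} - X_{\tau_k}$ and waiting times $T_k := \tau_{k+1} - \tau_k$ form an i.i.d.\ sequence for $k \geq 1$, with a harmless boundary piece $(\tau_1, X_{\tau_1})$ whose tails are controlled by Theorem \ref{thm: sub-slowdown}. That theorem gives $\prob(\tau_1 > u) \leq C e^{-c(\log u)^\alpha}$ for some $\alpha > 1$, so $T_1$ and $\vert Y_1\vert$ possess all polynomial moments; uniform ellipticity makes $Y_1$ aperiodic (full-dimensional support on $\Z^d$); and the annealed CLT under condition $(T)$ ensures that the covariance of $Y_1$ is finite and non-degenerate.

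Second, I would decompose
\[
\prob^z(X_n = x) \;=\; \sum_{k} \prob^z\bigl(N(n)=k,\,X_n = x\bigr),
\]
where $N(n) := \max\{k : \tau_k \leq n\}$, and split the sum according to whether $k$ lies in the typical window $W_n := \mu^{-1} n + [-n^{1/2+\varepsilon}, n^{1/2+\varepsilon}]$ with $\mu = \E T_1$. For $k \in W_n$, the Gnedenko local CLT applied to $S_k = \sum_{j=1}^{k} Y_j$ gives $\prob(S_k = y) \leq C k^{-d/2}$ uniformly in $y$; summing over the post-regeneration residual time (which is short on the event $N(n)=k$) contributes only a bounded multiplicative factor, yielding $\prob^z(X_n = x) \leq C n^{-d/2}$. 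The atypical range $k \notin W_n$ contributes at most $n^{-\xi(1)}$ via the tail bound on $\tau_1$ and Lemma \ref{lem: exit-ball}.

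Third, the two Lipschitz bounds follow from the gradient version of the same local CLT: for i.i.d.\ sums with finitely many moments, Edgeworth expansions give $\|\nabla g_k\|_\infty \leq C k^{-(d+1)/2}$ and $\|\partial_k g_k\|_\infty \leq C k^{-(d+2)/2}$ for the Gaussian approximant $g_k$, with errors of smaller order. Applied to $\prob^z(X_n = x) - \prob^z(X_{n+1} = y)$, the space shift $x \to y$ dominates and produces the $O(n^{-(d+1)/2})$ bound. For $\prob^z(X_n = x) - \prob^w(X_{n+1} = x)$ with $\|z - w\|_1 = 1$, I would use the Markov property at the first step to rewrite both quantities as averages of $P_\omega^{\cdot+e}(X_n = x)$ weighted by $\omega(\cdot, e)$, then reduce the comparison to the same spatial gradient bound.

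The principal obstacle is the local CLT for the regeneration walk $S_k$: one must verify non-degeneracy of $\mathrm{Cov}(Y_1)$ (through the annealed invariance principle under $(T)$, which follows from the polynomial moments of $\tau_1$ given by Theorem \ref{thm: sub-slowdown}), aperiodicity (immediate from uniform ellipticity), and uniform-in-$k$ error estimates in Gnedenko's expansion that are stable under summation over $k \in W_n$. Once these ingredients are in place the three bounds are assembled by routine summation.
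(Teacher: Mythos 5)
This lemma is not proved in the paper: it is imported verbatim as Lemma 2.14 of \cite{berger2016local}, so there is no in-house proof to compare against. Your outline is a reconstruction of the strategy used in that reference — regeneration structure plus a local CLT for the i.i.d.\ regeneration increments — and at that level of description it is the right strategy.

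There is, however, a genuine gap in your second step. The event $\{N(n)=k,\,X_n=x\}$ constrains the regeneration chain in space \emph{and} time simultaneously; bounding it by the purely spatial estimate $\prob(S_k=y)\le Ck^{-d/2}$ and then summing over the $\asymp n^{1/2+\varepsilon}$ indices $k\in W_n$ yields a bound of order $n^{-(d-1)/2+\varepsilon}$, not $n^{-d/2}$. The claim that the residual time ``contributes only a bounded multiplicative factor'' is exactly where the argument breaks: to recover the lost factor $n^{1/2}$ one must use the joint local CLT for the $(d+1)$-dimensional lattice walk $(X_{\tau_k},\tau_k)$ (on the correct parity sublattice of $\Z^{d+1}$ — the increments satisfy $Y_k\leftrightarrow T_k$, so aperiodicity is not automatic from ellipticity), giving $\prob(X_{\tau_k}=y,\tau_k=m)\le Ck^{-(d+1)/2}$. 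Summing this over $k$ marginalizes the time-like Gaussian direction and produces the renewal-measure bound $\sum_k\prob(X_{\tau_k}=y,\tau_k=m)\le Cm^{-d/2}$; convolving with the within-block occupation measure, whose total mass is $\E[\tau_2-\tau_1]<\infty$, then gives the first display. The two discrete-derivative estimates likewise require the gradient of this $(d+1)$-dimensional expansion rather than only the spatial one (the second display needs the time increment $n\to n+1$ as well as the space shift $x\to y$). Finally, your first-step reduction for the third display does not factor under the annealed measure: $\omega(w,\cdot)$ is correlated with $P_\omega^{w+e}(X_n=x)$ whenever the walk can return to $w$, so the comparison of starting points $z$ and $w$ must instead be routed through the first regeneration block.
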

\begin{lemma}[{\cite[Lemma 3.3]{berger2016local}}] \label{lem: ann-kernel2}
Assume $P$ is uniformly elliptic, i.i.d. and satisfies condition $(T)$. There exists a constant $C>0$, such that for all $z_1 \in \Z^d$, $N \in \N$ and $z \in \tilde{ \mathcal P}(z_1,N )$, we have
\begin{enumerate}
\item For every $m \in \N$ and $x \in \partial^+ \mathcal P(z_1, N)$
\begin{align*}
\prob^z(T_{\partial \mathcal P(z_1,N)} = m, X_{T_{\partial \mathcal P(z_1,N)}} = x) < C N^{-d}.
\end{align*}
\item For every $m \in \N$ and $x,y \in \partial^+ \mathcal P(z_1, N)$ such that $\norm{x-y}_1 = 1$
\begin{align*}
\vert \prob^z(T_{\partial \mathcal P(z_1,N)} = m, X_{T_{\partial \mathcal P(z_1,N)}} = x) -\prob^z(T_{\partial \mathcal P(z_1,N)} = m, X_{T_{\partial \mathcal P(z_1,N)}} = y)  \vert < C N^{-d-1}.
\end{align*}
\item For every $m \in \N$, every $x \in \partial^+ \mathcal P(z_1, N)$ and every $e \in \mathcal E_d$ 
\begin{align*}
\vert \prob^z(T_{\partial \mathcal P(z_1,N)} = m, X_{T_{\partial \mathcal P(z_1,N)}} = x) -\prob^{z+ e}(T_{\partial \mathcal P(z_1,N)} = m+1, X_{T_{\partial \mathcal P(z_1,N)}} = x)  \vert < C N^{-d-1}.
\end{align*}
\end{enumerate}

\end{lemma}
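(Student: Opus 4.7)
The plan is to deduce all three estimates from the annealed heat kernel bounds of Lemma \ref{lem: ann-kernel1} via the decomposition
\[
\prob^z(X_m = x) \;=\; \prob^z(T = m, X_T = x) \;+\; \prob^z(X_m = x, T < m),
\]
where $T = T_{\partial \mathcal P(z_1,N)}$. The identity is valid because $x \in \partial^+ \mathcal P(z_1,N)$ sits on the far face of the parallelogram and can only be reached from inside $\mathcal P(z_1,N)$ by first crossing its boundary, so $\{X_m = x\}$ forces $T \leq m$.

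For part (1), note that any $x \in \partial^+ \mathcal P(z_1,N)$ has $\langle x - z_1, e_1 \rangle = N^2$ while $z \in \tilde{\mathcal P}(z_1,N)$ has $\vert \langle z - z_1, e_1\rangle \vert < N^2/3$, and each step changes $\langle X_k, e_1\rangle$ by at most $1$. Hence $\{T = m, X_T = x\}$ is empty unless $m \geq 2N^2/3$, and since $\{T = m, X_T = x\} \subseteq \{X_m = x\}$, Lemma \ref{lem: ann-kernel1} yields
\[
\prob^z(T = m, X_T = x) \;\leq\; \prob^z(X_m = x) \;\leq\; Cm^{-d/2} \;\leq\; CN^{-d}.
\]

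For parts (2) and (3), the decomposition lets me write the required differences of exit probabilities as differences of annealed heat kernels, plus error terms coming from the remainder $R(z,m,x) := \prob^z(X_m = x, T < m)$. The heat kernel differences are controlled by the second and third estimates of Lemma \ref{lem: ann-kernel1}: for $\norm{x-y}_1 = 1$,
\[
\vert \prob^z(X_m = x) - \prob^z(X_m = y) \vert \;\leq\; Cm^{-(d+1)/2} \;\leq\; CN^{-d-1},
\]
and analogously for the joint space-time difference needed in part (3), using $m \asymp N^2$. It remains to show that $R(z,m,x) = o(N^{-d-1})$ uniformly. On the event $\{T < m\}$ the walk must exit $\mathcal P(z_1,N)$ at some earlier time $T < m$ and then return to $x$ on the front face. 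By Lemma \ref{lem: exit-estimates} the walk exits through a face other than $\partial^+ \mathcal P(z_1,N)$ with probability at most $C\exp(-cR_5(N))$, super-polynomially small in $N$. Conditional on an exit through $\partial^+$ at time $m' < m$, returning to the hyperplane $\{\langle\cdot - z_1, e_1\rangle = N^2\}$ at time $m$ forces zero net displacement in the $e_1$-direction over the $m - m'$ subsequent steps, against the ballistic drift $\langle v, e_1\rangle > 0$; this is a slowdown event whose annealed probability is at most $Ce^{-c(\log N)^\alpha}$ for some $\alpha > 1$ by Theorem \ref{thm: sub-slowdown}. Summing over the $O(N^{d+1})$ possible choices of $(m', y)$ leaves the error super-polynomially small, in particular $o(N^{-d-1})$.

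The main obstacle is the annealed summation over exit times and positions: the events $\{T = m', X_T = y\}$ and $\{X_m = x\}$ are not independent under $\prob$, because they are driven by the same environment. To close the argument one has to invoke the quenched Markov property at time $T$ and then combine it with the annealed heat kernel from the exit point, or equivalently restrict attention via Lemma \ref{lem: exit-ball} to the event that the walker stays within a slab of width $R_5(N)N^{1/2}$ about its mean. This bookkeeping is standard once the super-polynomial decay from Theorem \ref{thm: sub-slowdown} is in hand, since it swallows the polynomial factors generated by the summation.
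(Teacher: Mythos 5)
This lemma is quoted from \cite{berger2016local} and the paper gives no proof of it, so your attempt has to stand on its own. Part (1) of your argument is correct: $\{T=m, X_T=x\}\subseteq\{X_m=x\}$, the event is empty unless $m\geq 2N^2/3$, and the first bound of Lemma \ref{lem: ann-kernel1} gives $CN^{-d}$.

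Parts (2) and (3), however, have a genuine gap: the remainder $R(z,m,x)=\prob^z(X_m=x,\,T<m)$ in your decomposition is \emph{not} $o(N^{-d-1})$; it is of order $N^{-d}$, i.e.\ of the same order as the quantity in part (1) and larger by a factor of $N$ than the bound you need. To see this, take $m'=m-2$: the walk can exit through $\partial^+\mathcal P(z_1,N)$ at time $m-2$ at the point $x$ itself, step to a neighbor, and step back, so by uniform ellipticity $R(z,m,x)\geq \kappa^2\,\prob^z(T=m-2,\,X_T=x)$, and the right-hand side is genuinely of order $N^{-d}$ for typical $(m,x)$ (this is exactly the content of the matching lower bound in Lemma \ref{lem: ann-lower}). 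Your appeal to ballisticity/Theorem \ref{thm: sub-slowdown} only controls the contribution from exits with $m-m'$ large: returning to the hyperplane $H_{N^2}$ after a \emph{bounded} number of steps is not a slowdown event and has probability of order one. Since the error term dominates the target bound $CN^{-d-1}$, the decomposition cannot yield (2) or (3) unless you bound the \emph{difference} $R(z,m,x)-R(z,m,y)$ directly, which reintroduces the original problem (and, as you note yourself, the annealed measure does not factor over the strong Markov time $T$ because the post-exit path re-enters the same environment). The actual proof of this lemma in \cite{berger2016local} avoids heat-kernel decompositions entirely: one represents the exit event through the i.i.d.\ regeneration structure of Theorem \ref{thm: regen} and applies a local central limit theorem, together with its derivative estimates, to the $(d+1)$-dimensional lattice random variables $(X_{\tau_{k+1}}-X_{\tau_k},\,\tau_{k+1}-\tau_k)$; the $N^{-d-1}$ in parts (2) and (3) is the discrete-derivative decay of that local CLT. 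This is the same mechanism used for the lower bound in Lemma \ref{lem: ann-lower} of the present paper.
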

\section{Intersections of Random Walks} \label{sec: intersections}
All of our results will rely on bounds for the expected number of intersections of two independent random walks in the same environment. Let $X^{(1)}$ and $X^{(2)}$ be two independent random walks in the same environment starting at the origin, i.e. their joint law is given by
\begin{align*}
P^0_\omega \otimes P^0_\omega (X^{(1)} \in \cdot, X^{(2)} \in \cdot) = P^0_\omega(X^{(1)} \in \cdot)  P^0_\omega(X^{(2)} \in \cdot).
\end{align*}
For $n \in \N$, define 
\begin{align*}
\mathcal I_n = \sum_{\substack{z \in \Z^d \\ \norm{z}_1 \leq n}  } \1 \{X^{(1)} \: \: \text{visits} \: \: z\}\1 \{X^{(2)} \: \: \text{visits} \: \: z\}
\end{align*}
to be the number of intersection points of $X^{(1)}$ and $X^{(2)}$ in $\{z \in \Z^d: \norm{z}_1 \leq n\}$. The only part of the proof of the existence of an invariant measure in \cite{berger2016local} which assumes $d \geq 4$ is essentially bounding $E^0_\omega\otimes E^0_\omega  [\mathcal I_n] $. In high dimensions, the probability two random walks ever intersect is uniformly bounded away from zero, and hence $\mathcal I_n$ is dominated by a geometric random variable. In particular, the number of intersections is very small. This is not the case for $d=2$, and so we use an argument from the proof of \cite[Lemma 4.2]{berger2008quenched}. There, the authors bounded $E[E^0_\omega\otimes E^0_\omega  [\mathcal I_n] ]$ by showing that while two random walks might intersect often, the time between each intersection is long. To bound the martingale differences that arise in the proof of Proposition \ref{prop: heatkernel}, we will need to bound the tails of the quenched mean of $\mathcal I_n$. We will also need to control these tail bounds for the proof of Theorem \ref{thm: radon-quantitative}. We adjust the the arguments from \cite{berger2008quenched} accordingly, taking advantage of Lemma \ref{lem: sub-slowdown} to control the tail probability.
\begin{proposition}\label{prop: intersection}
Assume $d\geq 2$, and assume $P$ is uniformly elliptic, i.i.d. and satisfies condition $(T)$. Then for any $\alpha < 1+ \frac{d-1}{3d}$ and any $\epsilon>0$, there exist constants $C,c>0$ such that
\begin{align*}
P\left(E^0_\omega\otimes E^0_\omega  [\mathcal I_n] > n^{1/2+\epsilon}  \right) < C\exp(-c (\log n)^\alpha)
\end{align*}
for every $n \in \N$.
\end{proposition}
\begin{proof}
 Recall that $\mathbbm v$ is the limiting velocity of the random walk, and define
\begin{align*}
G(t) = \sum_{i,j=0}^\infty  \1_{\{X^{(1)}_i=X^{(2)}_j\}} \1_{\{ \langle X^{(1)}_i,\mathbbm v \rangle \in [t-0.5,t+0.5) \}}.
\end{align*}
We have
\begin{align*}
E^0_\omega\otimes E^0_\omega  [\mathcal I_n] &\leq \sum_{\substack{z \in \Z^d \\ \norm{z}_1 \leq n }}E^0_\omega\otimes E^0_\omega \left[\sum_{i,j=0}^\infty \1_{ \{ X_i^{(1)} = X_j^{(2)} = z \}} \right]\\
&\leq \sum_{t=-n}^n \sum_{\substack{ z \in \Z^d \\ \langle z,\mathbbm v  \rangle \in [t-0.5,t+0.5)}} E^0_\omega\otimes E^0_\omega  \left[\sum_{i,j=0}^\infty\1_{ \{ X_i^{(1)} = X_j^{(2)} = z \}} \right]\\
&= \sum_{t=-n}^n E^0_\omega\otimes E^0_\omega \left[\sum_{i,j=0}^\infty \1_{ \{ X_i^{(1)} = X_j^{(2)}\} }\1_{ \{ \langle X^{(1)}_i, \mathbbm v \rangle \in [t-0.5,t+0.5) \}} \right]\\
&= \sum_{t=-n}^n E^0_\omega\otimes E^0_\omega [G(t)] \\
&= \sum_{t=-n}^n E^0_\omega\otimes E^0_\omega [G(t) \1_{\{G(t) \neq 0\}}] .
\end{align*}
Let $\{ \tau_k^{(i)} \}_{k \in \N}$ denote the respective regeneration times of $X^{(i)}$, and define 
\begin{align*}
Y^{(i)}_n = \max\{ \tau_{k+1}^{(i)}-\tau_k^{(i)}: 1\leq k \leq n\}, \quad Y_n = \max \{Y^{(1)}_n , Y^{(2)}_n  \}.
\end{align*}
Since $G(t)$ is the number of pairs of intersection times in
\begin{align*}
\{ z \in \Z^d: \langle z ,\mathbbm v \rangle \in [t-0.5,t+0.5)  \},
\end{align*}
 it is bounded by the length of the $X^{(1)}$ regeneration interval containing $t$, multiplied by the length of the $X^{(2)}$ regeneration interval containing $t$. Hence
\begin{align*}
G(t) \leq Y^{(1)}_n \cdot Y^{(2)}_n \leq Y_n^2,
\end{align*}
 and so
\begin{align*}
E^0_\omega\otimes E^0_\omega [\mathcal I_n] \leq   E^0_\omega \otimes E^0_\omega \left[Y_n^2 \sum_{t=-n}^n\1_{\{ G(t) \neq 0 \}} \right].
\end{align*}
To bound the sum inside the expectation, we will need to introduce more notation. Define the variables $\{\psi_n \}_{n \in \N}$ and $\{\theta_n \}_{n \in \N}$ inductively: $\theta_0 = 0$, $\psi_1 = \max \{ \tau_1^{(1)} , \tau_1^{(2)} \}$, and for $n \geq 1$
\begin{align*}
\theta_n = \min \{k > \psi_n: G(k) \neq 0 \}, \quad \psi_{n+1} = \max \{ \tau^{(1)}(\theta_n) , \tau^{(2)}(\theta_n) \},
\end{align*}
where
\begin{align*}
\tau^{(i)}(k) = \min \{ \langle X_{\tau^{(i)}_m}, \mathbbm v \rangle:   \langle X_{\tau^{(i)}_m}, \mathbbm v \rangle > k+1, m \in \N   \}.
\end{align*}
Since we are under condition $(T)$, $\tau^{(i)}(k) $ is well-defined. Define $j_n = \theta_n - \psi_n$, $h_n = \psi_{n}-\theta_{n-1}$ and
\begin{align*}
K_n = \min \left\{m \in \N: \sum_{i=1}^m j_i > n \right\},
\end{align*}
and note that
\begin{align*}
\sum_{t=-n}^n \1_{\{G(t) \neq 0\}} \leq K_n \cdot Y_n.
\end{align*}
It follows that $E^0_\omega\otimes E^0_\omega[\mathcal I_n] < E^0_\omega\otimes E^0_\omega[Y_n^3 \cdot K_n]$, and so we are left to derive upper bounds for $K_n$ and $Y_n$. From \cite[(4.19)]{berger2008quenched}, there exists a constant $c_1>0$ such that for any $\delta_1>0$ we have
\begin{align} \label{eq: int-lower} 
E[P^0_\omega \otimes P^0_\omega(j_n > k\vert j_1,\ldots, j_{n-1}, h_1,\ldots, h_n)] \geq c_1/ k^{1/2+\delta_1}.
\end{align}
Hence for $m \in \N$, we have
\begin{align*}
E[P^0_\omega \otimes P^0_\omega (K_n > m)] &= E \left[P^0_\omega \otimes P^0_\omega\left ( \sum_{i=1}^m j_i < n \right ) \right] \\
&\leq E[P^0_\omega \otimes P^0_\omega (\forall 1 \leq i \leq m, j_i < n  )]  \\
&\leq (1-c_1/n^{1/2+\delta_1})^m  \\
&\leq  \exp\left(-c_1 \frac{m}{n^{1/2+\delta_1}}\right),
\end{align*}
where the second inequality follows from \eqref{eq: int-lower}. For $\delta_2 > \delta_1 > 0$, define the event 
\begin{align*}
B_n = \left\{\omega \in \Omega:  P^0_\omega\otimes P^0_\omega (K_n > n^{1/2 + \delta_2}) <  \exp\left(-\frac{c_1}{2} n^{\delta_2 - \delta_1} \right) \right\}.
\end{align*}
From Markov's inequality we have 
\begin{align} \label{eq: int-bad-1}
P(B^c_n) < \frac{ E[P^0_\omega\otimes P^0_\omega(K_n>n^{1/2 + \delta_2})] } {\exp\left(-\frac{c_1}{2}n^{\delta_2 - \delta_1}  \right)} \leq \exp\left(-\frac{c_1}{2}n^{\delta_2-\delta_1}\right).
\end{align} 
To bound $Y_n$, define the event 
\begin{align*}
A_n= A_n(j) = \left\{\omega \in \Omega:  P^0_\omega \otimes P^0_\omega (Y_n > R_j(n)) < C\exp\left({-c(\log n)^{\alpha (j+2)/(j+3)}} \right) \right\}.
\end{align*}
From Lemma \ref{lem: sub-slowdown} and Theorem \ref{thm: regen}, for every $\alpha < 1+ \frac{d-1}{3d}$ there exist constants $C,c>0$ such that 
\begin{align} \label{eq: int-bad-2}
P(A^c_n) \leq C\exp\left({-c(\log n)^{\alpha (j+2)/(j+3)}} \right) 
\end{align} for all $n \in \N$. We decompose the expectation
\begin{align*}
E^0_\omega \otimes E^0_\omega [\mathcal I_n] &\leq E^0_\omega \otimes E^0_\omega [Y^3_n \cdot K_n]  \\
&\leq E^0_\omega \otimes E^0_\omega [Y^3_n \cdot K_n \1_{K_n \leq  n^{1/2 + \delta_2} } \1_{Y_n \leq R_j(n)}] \\
&+ E^0_\omega \otimes E^0_\omega [Y^3_n \cdot K_n \1_{K_n \leq  n^{1/2 + \delta_2} } \1_{Y_n > R_j(n)}] \\
&+ E^0_\omega \otimes E^0_\omega [Y^3_n \cdot K_n \1_{K_n >  n^{1/2 + \delta_2} } \1_{Y_n \leq R_j(n)}] \\
&+ E^0_\omega \otimes E^0_\omega [Y^3_n \cdot K_n \1_{K_n >  n^{1/2 + \delta_2} } \1_{Y_n > R_j(n)}] \\
&\leq R^3_j(n)  n^{1/2+\delta} \\
&+ n^{1/2+\delta_2}E^0_\omega \otimes E^0_\omega [Y^3_n \1_{Y_n > R_j(n)}]\\
&+ R^3_j(n)E^0_\omega \otimes E^0_\omega [ K_n \1_{K_n >  n^{1/2 + \delta_2} } ]\\
&+E^0_\omega \otimes E^0_\omega [Y^6_n \1_{Y_n > R_j(n)}]^{1/2}\cdot E^0_\omega \otimes E^0_\omega [ K^2_n \1_{K_n >  n^{1/2 + \delta_2} } ]^{1/2},
\end{align*}
where the fourth term in the last inequality is bounded by Cauchy-Schwartz.
This implies that for $ \omega \in A_n \cap B_n$, we have for large enough $n$
\begin{align*}
E^0_\omega \otimes E^0_\omega [\mathcal I_n] \leq n^{1/2 + \delta_2+\epsilon},
\end{align*}
for arbitrary $\epsilon>0$. From \eqref{eq: int-bad-1} and \eqref{eq: int-bad-2}, we see that for all $\alpha < 1+ \frac{d-1}{3d}$
\begin{align*}
P((A_n \cup B_n)^c) \leq Ce^{-c(\log n)^{\alpha (j+2)/(j+3)}} + e^{-c_1n^{\delta_2 - \delta_1}} .
\end{align*}
If $\alpha'$ satisfy $  \alpha' <\alpha(j+2)/(j+3) < 1+ \frac{d-1}{3d}$, then
\begin{align*}
 P((A_n \cup B_n)^c) \leq  C e^{-c(\log n)^{\alpha'}}.
\end{align*} 
Since $j$ is arbitrary we conclude that the bound holds for any $\alpha' < 1+ \frac{d-1}{3d}$, which finishes the proof.
\end{proof}
\section{Quenched heat kernel estimates} \label{sec: heat-kernel}
In this section we state a semi-local limit theorem which will be used to prove Theorems \ref{thm: invariant}, \ref{thm: radon-quantitative} and \ref{thm: slowdown}. It extends \cite[Proposition 3.1]{berger2016local} in two ways. First, it applies to $d=2$ and $d=3$, which is due to the intersection estimates from Section \ref{sec: intersections}. Second, it provides quantitative bounds for the probability the environment has large quenched heat kernel, which we will need for proving Theorem \ref{thm: radon-quantitative}.
\begin{proposition}\label{prop: heatkernel}
Let $d\geq 2$. Assume $P$ is uniformly elliptic, i.i.d. and satisfies condition $(T)$. For every $\theta \in (0,1]$, let $F(N) = F(N,\theta)$ be the event that for every $z \in \tilde {\mathcal P}(0,N)$, every $(d-1)$-dimensional cube $\Delta \subset \partial^+\mathcal P(0,N)$ of side-length $N^\theta$ and every interval $I$ of length $N^\theta$
\begin{align*}
\vert P_\omega^z(X_{T_{\partial \mathcal P(0,N)}}\in \Delta,T_{\partial \mathcal P(0,N)} \in I )-\prob^z(X_{T_{\partial \mathcal P(0,N)}}\in \Delta,T_{\partial \mathcal P(0,N)} \in I ) \vert \leq \frac{CN^{\theta d}}{N^{d}}\frac{R_3(N)}{N^{\theta (d-1)/(d+2)}} .
\end{align*}
Then for any $\alpha < 1 + \frac{d-1}{3d}$, there exist constants $C,c>0$ such that $P(F(N)) > 1 -  C e^{-c(\log N)^{\alpha}}$ for all $N \in \N$.
\end{proposition}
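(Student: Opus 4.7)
The plan is to adapt the martingale argument of Proposition 3.1 in \cite{berger2016local}, refining the variance estimate so it accommodates $d=2,3$. Fix $z \in \tilde{\mathcal P}(0,N)$, a cube $\Delta \subset \partial^+ \mathcal P(0,N)$ of side $N^\theta$, and an interval $I$ of length $N^\theta$. Set $T = T_{\partial \mathcal P(0,N)}$ and
\[
g(\omega) = P_\omega^z\bigl(X_T \in \Delta,\; T \in I\bigr).
\]
By Lemmas \ref{lem: exit-estimates} and \ref{lem: exit-ball}, we may work on a high-probability event on which only the environment inside $\mathcal P(0,N)$ matters and $T$ is concentrated around its annealed mean of order $N^2$. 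Enumerate the sites inside $\mathcal P(0,N)$ as $x_1,\dots,x_K$ and set $\mathcal F_k = \sigma(\omega(x_1),\dots,\omega(x_k))$, so that $M_k = E[g \mid \mathcal F_k]$ is a martingale with $M_0 = \prob^z(X_T \in \Delta,\; T \in I)$ and $M_K = g(\omega)$.

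The heart of the argument is a second-moment bound on the increments. A standard coupling differential (replacing $\omega(x_k)$ by an independent copy) expresses $M_k - M_{k-1}$ as a difference of quenched exit probabilities into the window $(\Delta, I)$, both conditional on visiting $x_k$. Combining the smoothness estimates of Lemma \ref{lem: ann-kernel2}(2)--(3) (which give $\lesssim N^{\theta d - d}$ as the maximum per-site contribution to landing in $(\Delta, I)$) with the annealed kernel bound Lemma \ref{lem: ann-kernel1} yields
\[
\sum_{k} E\bigl[(M_k - M_{k-1})^2\bigr] \;\lesssim\; \Bigl(\tfrac{N^{\theta d}}{N^d}\Bigr)^{\!2} \E\bigl[J(X,Y)\bigr],
\]
where $J(X,Y)$ is the number of intersections of two independent walks $X,Y$ in the same environment starting at $z$ and stopped at $T$. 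A Freedman-type concentration inequality for martingales then produces
\[
P\bigl(|g - E[g]| > t\bigr) \;\leq\; 2\exp\!\Bigl(-\tfrac{c\,t^2}{N^{2(\theta d - d)}\, \E[J]}\Bigr).
\]

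The key quantitative input is the intersection estimate. Since the parallelogram has longitudinal length $N^2$ and transverse radius $N R_5(N)$, two ballistic walks run for time $\asymp N^2$ have a diffusive transverse spread of order $N$, giving the heuristic bound $\E[J] \lesssim N^{3-d} R_3(N)^{O(1)}$: of order $N$ (with subpolynomial corrections) in $d=2$, of order $1$ in $d=3$, and decaying for $d \geq 4$ (recovering the regime of \cite{berger2016local}). Choosing $t = N^{\theta d - d} R_3(N)\, N^{-\theta(d-1)/(d+2)}$, plugging into the concentration bound, and using Theorem \ref{thm: sub-slowdown} to control the exceptional event that $T$ is atypical, produces a deviation probability at most $C \exp(-c(\log N)^\alpha)$ for any $\alpha < 1 + (d-1)/(3d)$. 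A final union bound over the $\lesssim N^{(d-1)(1-\theta)} \cdot N^{2(1-\theta)}$ choices of $(\Delta, I)$ and the $\lesssim N^{d+1} R_5(N)^{d-1}$ choices of $z$ is absorbed since the tail is faster than any polynomial.

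The main obstacle is the refined intersection estimate in low dimensions. In \cite{berger2016local} the two walks meet only $O(1)$ times and naive annealed counting suffices, but for $d=2,3$ the collision count is polynomially large. Controlling it to the precision demanded by the statement will require combining Lemmas \ref{lem: ann-kernel1} and \ref{lem: ann-kernel2} with a scale-by-scale decomposition along the hierarchy $R_0(N) \ll R_1(N) \ll \cdots$, exploiting the fact that once the two walks separate transversally by more than $R_j(N)$ they are unlikely to re-collide within $\mathcal P(0,N)$. Establishing this hierarchical intersection bound is the main technical novelty needed to make the martingale argument work in dimensions $2$ and $3$.
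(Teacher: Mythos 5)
Your skeleton (environment-exposure martingale, variance controlled by the intersection count of two independent walks in the same environment, then a concentration inequality) is indeed the paper's skeleton, and you correctly identify the low-dimensional intersection estimate as the crux. But the way you propose to obtain that estimate would not work, and it is not what the paper does. You want $\E[J]\lesssim N^{3-d}R(N)^{O(1)}$ via a scale-by-scale argument ``exploiting the fact that once the two walks separate transversally by more than $R_j(N)$ they are unlikely to re-collide within $\mathcal P(0,N)$.'' That fact fails precisely in the dimensions at stake: for $d=2$ the transverse coordinate is one-dimensional, so two walks separated by $R_j(N)\ll N$ typically re-meet within longitudinal distance $R_j(N)^2\ll N^2$; the no-recollision mechanism is exactly what confines \cite{bolthausen2002static,berger2016local} to $d\ge4$. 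The paper's Lemma \ref{lem: intersection} instead follows Lemma 4.2 of \cite{berger2008quenched}: intersections are grouped into regeneration slabs, each slab contributes at most $Y_n^{(1)}Y_n^{(2)}\le R_j^2(n)$ intersections, and the number of occupied slabs is controlled by the variable $K$, whose tail $\prob^0\otimes\prob^0(K>t)\le\exp(-ct/n^{1/2+\delta_1})$ comes from the fact that two walks in the same environment, on a non-intersection event, are distributed as walks in independent environments. This yields only $E_\omega\otimes E_\omega[I_{N^2}]\le N^{1+\epsilon}$ --- uniformly in $d\ge2$, and crucially as a \emph{quenched} bound holding off an event of probability $Ce^{-c(\log N)^\alpha}$. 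Your concentration step needs exactly that: McDiarmid/Freedman requires a deterministic bound on $\sum_k U_k^2$ on the event you retain, so an annealed bound on $\E[J]$ alone would give at best a Chebyshev-type polynomial tail, not the stretched-exponential one claimed. (The bound $\E[J]=O(1)$ you assert for $d=3$ is the content of Lemma \ref{lem : intersections-upgrade}, which is proved \emph{using} Proposition \ref{prop: heatkernel}, so it cannot be an input here.)

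Separately, a single application of the martingale argument cannot prove the statement for all $\theta\in(0,1]$. With the available intersection bound $N^{1+\epsilon}$ and your per-increment estimate $N^{\theta d-d}$, the exponent in your concentration bound is $R_3(N)^2N^{-2\theta(d-1)/(d+2)}/N^{1+\epsilon}\to0$, i.e.\ the bound is vacuous. The paper obtains a nontrivial exponent only by first targeting a \emph{single} exit point and exit time, so that the martingale increment carries the annealed derivative factor $V^{-(d+1)/2}$ from Lemma \ref{lem: ann-kernel1}; this forces $\theta>(2d+1)/(2d+2)$ (Lemma \ref{lem: box1}). Small $\theta$ is then reached by a descending induction decomposing $\mathcal P(0,N)$ into sub-parallelograms of side $N^\rho$ (Lemma \ref{lem: box2}), and the stated error $R_3(N)N^{-\theta(d-1)/(d+2)}$ is recovered only after bootstrapping those quenched visit bounds into a refined variance estimate over dyadic shells with weights $L_{x}^{-(d+1)}$ (Lemma \ref{lem: box3}). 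Without this three-step multi-scale structure your argument does not close.
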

\begin{remark}
The bound $C e^{-c(\log N)^{\alpha}}$ for every $\alpha < 1 + \frac{d-1}{3d}$ is sub-optimal. After proving Theorem \ref{thm: slowdown}, we will essentially prove that this bound holds for every $\alpha<d$, see Lemma \ref{lem: d-box}.
\end{remark}
We leave the proof of the proposition to Appendix \ref{sec: heatkernel-appendix}. We can now prove Theorem \ref{thm: invariant}.
\begin{proof}[Proof of Theorem \ref{thm: invariant} assuming Proposition \ref{prop: heatkernel}]
The existence of the invariant measure and the local CLT were proven in \cite[Theorems 1.10, 1.11]{berger2016local} for $d \geq 4$. The only part of their proof that depends on $d$ is the quenched heat kernel bounds in \cite[Proposition 3.1]{berger2016local}. Since Proposition \ref{prop: heatkernel} provides the corresponding estimate for $d=2$ and $d=3$, we are done.
\end{proof} 
To prove Theorem \ref{thm: slowdown}, we will need a stronger version of Proposition \ref{prop: intersection}. The next lemma uses the heat kernel bounds of Proposition \ref{prop: heatkernel} to estimate the number of intersections.
 \begin{lemma} \label{lem: intersections-upgrade}
Let $d \geq 3$ and assume $P$ is uniformly elliptic, i.i.d. and satisfies condition $(T)$. Then for each $ \epsilon >0$,
\begin{align*}
P\left( E_\omega^0 \otimes E_\omega^0[\mathcal I_n] > n^{\epsilon} \right) = n^{-\xi(1)}.
\end{align*}
 \end{lemma}
 \begin{proof}
Recall that $X^{(1)}$ and $X^{(2)}$ are independent random walks in the same environment. Before we bound the number of their intersections, we will condition the environment to have good properties. Fix $\delta \in (0,1)$, and define the sets
\begin{align*}
\mathcal A =\bigcup_{m=\lfloor n^\delta \rfloor}^n B\left(\E^0[X_{m}],R_5(m) \sqrt{m} \right), \qquad \mathcal B_\delta =  \{z \in \Z^d: \norm{z}_1 < \lfloor n^\delta \rfloor \},
\end{align*}
and let $\mathcal A_\delta = \mathcal A \cup \mathcal B_\delta$.
Note that since this is a nearest neighbor random walk, starting from the origin it will take at least $\lfloor n^\delta \rfloor$ steps to visit $\mathcal A_\delta^c$. From the union bound and Lemma \ref{lem: exit-ball}, we have
\begin{align}\label{eq: int-space}
\begin{split}
\prob^0(\exists m \leq n, \: X_m \in \mathcal A_\delta^c )&= \prob^0(\exists m \in [\lfloor n^\delta \rfloor, n], \: X_m \in \mathcal A_\delta^c )\\
&\leq  \sum_{m = \lfloor n^{\delta} \rfloor }^n\prob^0(X_m \not \in B(\E^0[X_m],R_5(m)m^{1/2}))\\
&\leq \sum_{m = \lfloor n^{\delta} \rfloor }^n Ce^{-c R_5(m)} \\
&\leq n\cdot  Ce^{-c R_5(\lfloor n^\delta \rfloor)} \\
&\leq \tilde Ce^{-\tilde c R_5(n)}.
\end{split}
\end{align}
Define the event
\begin{align*}
E_n = &\{ \omega \in \Omega : P_\omega^0(\exists m < n ,  \: X_m \in \mathcal A_\delta^c ) \leq   C e^{-c R_5(n)} \};
\end{align*}
by Markov's inequality and \eqref{eq: int-space}, we have $P(E_n^c)  =  n^{-\xi(1)}$. 

The number of intersection points of $X^{(1)}$ and $X^{(2)}$ in $\mathcal B_\delta$ is trivially bounded by $C n^{\delta d}$. Hence for $\omega \in E_n $, we have
\begin{align*}
E^0_\omega \otimes E^0_\omega [\mathcal I_n] &\leq \sum_{x \in \Z^d} P^0_\omega(X^{(1)} \: \text{visits} \: x)P^0_\omega(X^{(2)} \: \text{visits} \: x) \\
&= \sum_{x \in \Z^d} P_\omega^0(X \: \text{visits} \: x)^2\\
&\leq \sum_{x \in \mathcal B_\delta} P^0_\omega(X \: \text{visits} \: x)^2+ \sum_{x \in \mathcal A \cap \mathcal B^c_\delta} P^0_\omega(X \: \text{visits} \: x)^2+\sum_{x \in \mathcal A_\delta^c} P^0_\omega(X \: \text{visits} \: x)^2\\
&\leq C n^{\delta d}+\sum_{ x \in \mathcal A \cap \mathcal B^c_\delta} P^0_\omega(X \: \text{visits} \: x)^2+n^{-\xi(1)}.
\end{align*}
To bound the remaining number of intersection points, we will use Proposition \ref{prop: heatkernel}. We write
\begin{align*}
\mathcal A \cap \mathcal B^c_\delta= \bigcup_{m =\lfloor n^{\delta} \rfloor  }^{n} B(\E^0[X_{m}], R_5(m)m^{1/2}) \cap \mathcal B^c_\delta \cap H_{m} 
\eqqcolon  \bigcup_{m =\lfloor n^{\delta} \rfloor  }^{n} D_m.
\end{align*}
Fix $\epsilon>0$, and let $F_n$ be the event that for all $m > \lfloor n^{\delta} \rfloor$, for all $(d-1)$-dimensional boxes $\Delta$ of side-length $\lfloor m^{\epsilon} \rfloor $, we have $ P_\omega^0(X_{T_m} \in \Delta) \leq C m^{\epsilon(d-1)}/m^{(d-1)/2}$. By Proposition \ref{prop: heatkernel}, Lemma \ref{lem: ann-kernel2} and the union bound, $P(F_n^c) = n^{-\xi(1)}$. 

Let $x \in \mathcal D_m$ and let $\Delta$ be the $(d-1)$-dimensional box of side-length $\lfloor m^{\epsilon} \rfloor$ whose center is $x$. We have
\begin{align*}
P^0_\omega(X \: \text{visits} \: x) \leq P^0_\omega(X_{T_m} \in \Delta) + P^0_\omega(X \: \text{visits} \: x, X_{T_m} \not \in \Delta).
\end{align*}
From Lemma \ref{lem: sub-slowdown} and Theorem \ref{thm: regen} and the union bound, we have for $d \geq 2$
\begin{align*}
\prob^0(\exists \: 1 \leq k \leq n : \tau_{k}-\tau_{k-1}> R_6(n)) \leq n \cdot Ce^{-c (\log R_6(n))^{8/7}} \leq  Ce^{-c (\log n)^{64/63}}.
\end{align*}
Define the event 
\begin{align*}
G_n = \{\omega \in \Omega: P_{\omega}^0(\exists \: 1 \leq k \leq n :  \tau_{k}-\tau_{k-1}> R_6(n)) \leq Ce^{-c (\log n)^{64/63}} \}.
\end{align*}
By the last bound and Markov's inequality, we have $P(G_n^c)  = n^{-\xi(1)}$.  We now observe that 
\begin{align*}
P^0_\omega(X \: \text{visits} \: x,X_{T_m} \not \in \Delta) \leq P_{\omega}^0(\exists \: 1 \leq k \leq n :  \tau_{k}-\tau_{k-1}> R_6(n)).
\end{align*}
This is because for the random walk to visit $x \in H_m$ after $T_m$, the random walk must regenerate after at least $R_6(n)$ steps since $R_6(n) \ll m^{\epsilon}$ for all $m>n^{\delta}$. Hence we have that under the event $F_n \cap G_n$, 
\begin{align*}
P^0_\omega(X \: \text{visits} \: x)  \leq C m^{\epsilon (d-1)}/m^{(d-1)/2} + Ce^{-c (\log n)^{64/63}},
\end{align*}
for every $x \in \mathcal A \cap \mathcal B^c_\delta$.

Thus, for $\omega \in F_n \cap G_n$, we have
\begin{align*}
 \sum_{ x \in \mathcal A \cap \mathcal B^c_\delta}P^0_\omega(X \: \text{visits} \: x)^2    = & \sum_{m= \lfloor n^{\delta} \rfloor }^n\sum_{x \in D_m} P^0_\omega(X \: \text{visits} \: x)^2  \\
 \leq & C \sum_{m=\lfloor n^{\delta} \rfloor }^n \vert D_m \vert \cdot \frac{m^{2 \epsilon(d-1)}}{m^{d-1}}  \\
 \leq & C \sum_{m=\lfloor n^{\delta} \rfloor }^n R_5^{d-1}(m)m^{(d-1)/2} \frac{m^{2 \epsilon(d-1)}}{m^{d-1}}  \\
 \leq & C\sum_{m=\lfloor n^{\delta} \rfloor }^n R_6(m) \frac{m^{2 \epsilon(d-1)}}{m^{(d-1)/2}}  \\
 \leq & Cn^{c\delta}
 \end{align*}
where the last inequality used the fact that $d \geq 3$ and $\epsilon$ is chosen small enough. We have shown that under the events $ E_n \cap F_n \cap G_n$, we have $E^0_\omega \otimes E^0_\omega [\mathcal I_{n}] < C n^{\delta d }+ Cn^{c \delta} + n^{-\xi(1)}$. Since $\delta$ is arbitrary, we are done.
\end{proof}
With this result, the proof of Theorem \ref{thm: slowdown} relies on reviewing \cite{berger2012slowdown} and replacing their intersection bounds with Lemma \ref{lem: intersections-upgrade}, which we leave to Appendix \ref{sec: slowdown-appendix}.
\section{Tail estimates for $dQ/dP$} \label{sec: radon-quantitative}
The purpose of this section is to prove Theorem \ref{thm: radon-quantitative}. Let $Q_n$ be the law of $\overline{\omega}_n$ and define 
\begin{align} \label{eq: radon-approx}
f_n(\omega) = \sum_{z \in \Z^d}P^z_\omega(X_n =0).
\end{align}
We observe that $dQ_n = f_n dP$, since for measurable $A \subset \Omega$
\begin{align*}
Q_n(A) = E \left[\sum_{z \in \Z^d} P_\omega^0(X_n = z) \1_{\sigma_z \omega \in A} \right]=  E\left[\sum_{z \in \Z^d} P_\omega^z(X_n = 0) \1_{\omega \in A} \right] =E[f_n (\omega) \1_{\omega \in A}],
\end{align*}
where the first equality is by definition of $\overline \omega_n$, and the second equality is by translation invariance of $P$. To prove the theorem, we will prove tail estimates for $f_n$ and then show it approximates $dQ/dP$.
\begin{proposition}\label{prop: radon-approx}
Assume $P$ is uniformly elliptic, i.i.d. and satisfies condition $(T)$. For every $d \geq 3$ and $\alpha < d$, there exist constants $C,c>0$ such that for every $n \in \N$ and $u>1$,  
\begin{align*}
 P\left(f_n>  u \right)\leq Ce^{-c(\log u)^\alpha} + Ce^{-c (\log n)^2}.
\end{align*}
 Furthermore, if $d \geq 2$ and $P$ is nestling, then there exist constants $C,c>0$ such that for every $n \in \N$ and $u>1$, 
\begin{align*}
  P\left(f_n>  u \right)\geq  Ce^{-c (\log u)^d}.
\end{align*}
\end{proposition}
We will need the following theorem to prove $Q_n$ converges to $Q$.
\begin{theorem}[Kozlov \cite{kozlov1985method}] \label{thm: kozlov}
Assume $P$ is uniformly elliptic and i.i.d. Assume there exists an invariant probability measure $Q$ with respect to $\overline \omega_n$ such that $Q \ll P$. Then the following hold:
\begin{enumerate}
\item $Q$ is equivalent to $P$.
\item $\{\overline \omega_n: n \in \N_0\}$ with initial law $Q$ is ergodic.
\item $Q$ is the unique invariant probability measure for $\{\overline \omega_n: n \in \N_0\}$ which is absolutely continuous with respect to $P$.
\end{enumerate}
\end{theorem}
 \begin{proof}[Proof of Theorem \ref{thm: radon-quantitative} assuming Proposition \ref{prop: radon-approx}]
 In \cite[Theorem 3.1]{sznitman1999law}, Sznitman and Zerner proved that under Kalikow's condition, $Q_n$ converges weakly to some invariant distribution. Their proof relies on Kalikow's condition in order to assume finite moments for the regeneration times. Since this is implied by condition $(T)$ (for example Lemma \ref{lem: sub-slowdown}), we can rerun their same proof to conclude that $Q_n \to Q_\infty$ weakly, where $Q_\infty$ is an invariant distribution. By Theorem \ref{thm: kozlov}, to show that $Q_{\infty}$ is the unique invariant probability measure $Q$ which is equivalent to $P$, it is enough to show $Q_\infty \ll P$. Let $A \subset \Omega$ be any measurable set. Applying the Cauchy-Schwartz inequality, we have
\begin{align} \label{eq: cs}
Q_n(A)  = E[f_n(\omega)\1_{\omega \in A}]\leq (E f_n^2)^{1/2}P(A)^{1/2}.
\end{align}
From the tail estimates of Proposition \ref{prop: radon-approx}, we claim $\sup_{n \in \N} E[f_n^2] <  \infty$. Indeed, by the definition of $f_n$, and that $X_n$ is nearest-neighbor random walk, we have $f_n < Cn^{d}$ for $P$-a.e. $\omega$. We thus have
\begin{align*}
E[f_n^2] = \int_0^{ Cn^{2d} }P[f_n^2 >u] du \leq 1+ \int_1^{ Cn^{2d} } C'e^{-c(\log u)^\alpha} du+ C''n^{2d} e^{-c(\log n)^2}  <\infty. 
\end{align*}
Hence we can take limits on both sides of \eqref{eq: cs} and conclude that
\begin{align*}
Q_\infty(A) < C P(A)^{1/2},
\end{align*}
which implies that $Q_\infty \ll P$. In particular, we have $Q_n \xrightarrow{d} Q_\infty = Q$. Since $\{f_n \}_{n \in \N}$ is uniformly integrable, by the Dunford-Pettis Theorem (see \cite[Lemma 4.13]{kallenberg1997foundations}), for any $g \in L^1(P)$, we have
\begin{align*}
\lim_{n \to \infty} E[f_{n}(\omega)g(\omega)]= E[dQ/dP (\omega)g(\omega)].
\end{align*}
We claim that this implies $f_n$ converges to $dQ/dP$ in probability, which will conclude the proof of the theorem. By contradiction, if $f_n$ does not converge to $dQ/dP$ in probability, then there exist $\epsilon, \delta>0$ and a subsequence $\{n_k \}_{k \in \N}$ such that 
\begin{align*}
\forall k \in \N, \: P(E_{n_k} )  \coloneqq  P( \omega \in \Omega : f_{n_k}(\omega) - dQ/dP(\omega ) > \epsilon ) > \delta.
\end{align*}
Define the event $E = \limsup_{k \to \infty} E_{n_k} $, and observe that 
\begin{align*}
P(E) = \lim_{k \to \infty} P\left(\bigcup_{j=k}^\infty E_{n_j} \right) > \delta.
\end{align*}
For $\omega \in E$, there exists a sub-subsequence $n_{k_m}$ such that $f_{n_{k_m}}(\omega) - dQ/dP(\omega) > \epsilon $. By Fatou's lemma, we have
\begin{align*}
\liminf_{m \to \infty} E[(f_{n_{k_m}}(\omega) - dQ/dP(\omega)) \1_{\omega \in E}] \geq \epsilon \cdot P(E)> \epsilon \cdot \delta > 0,
\end{align*}
which is a contradiction.
\end{proof}
\subsection{Upper bound}
This section is dedicated to proving the upper bound in Proposition \ref{prop: radon-approx}. We will need a version of Proposition \ref{prop: heatkernel} which holds for $d$-dimensional box, which is the subject of Lemma \ref{lem: d-box}. We then extend this lemma to boxes whose size do not depend on $n$ in Lemma \ref{lem: smallbox-upper}.
 \begin{lemma} \label{lem: d-box}
Let $d\geq 3$, and assume $P$ is uniformly elliptic, i.i.d. and satisfies condition $(T)$. Fix $\theta \in (0,1/2]$, and let $E(n)=E(n,\theta)$ be the event that for every $z \in \tilde{\mathcal P}(0,n^{1/2})$ and every $d$-dimensional box $\Delta$ with side-length $L = \lfloor n^\theta \rfloor$, we have
\begin{align*}
\left \vert P_\omega^z(X_n \in \Delta) - \prob^z(X_n \in \Delta)\right \vert < C \frac{L^d}{n^{d/2}} \frac{R_3(L)}{L^{1/4}}.
\end{align*}
Then for any $\alpha < d$, there exist constants $C,c>0$ such that $P(E(n)) > 1 -  C e^{-c(\log n)^{\alpha}}$ for all $n \in \N$.
\end{lemma}
\begin{remark}
Aside from holding for $d$-dimensional boxes, this lemma also provides nearly sharp bounds on the tail probability the heat kernel is large. This is possible for $d \geq 3$ by the tail bounds for the regeneration times in \eqref{eq: berger-slowdown} and Theorem \ref{thm: slowdown}.
\end{remark}
We leave the proof of Lemma \ref{lem: d-box} to Appendix \ref{sec: d-box-appendix}.
\begin{lemma}\label{lem: smallbox-upper}
Let $d \geq 3$ and assume $P$ is uniformly elliptic, i.i.d. and satisfies condition $(T)$. Fix $n \in \N$ and let $L \in (0,n^{1/2}] \cap \N$. Let $x \in \Z^d$ and let $\Delta$ be a $d$-dimensional box with side-length $L$. For some $c'>0$, let $ G(\Delta)$ be the event that for every $z \in B(x,c'n^{1/2})$ we have
\begin{align*}
\vert P_\omega^z(X_{n} \in \Delta)- \prob^z(X_{n} \in \Delta) \vert \leq C\frac{L^d}{n^{d/2}}\frac{R_3(L)}{L^{1/4}}.
\end{align*}
Then for any $\alpha < d$, there exist constants $C,c,\tilde{c}>0$ such that $P(G(\Delta)^c) \leq C e^{-c(\log L)^\alpha}$ for any $n \in \N$ and $L \in (0,n^{1/2}] \cap \N$.
\end{lemma}
We will prove this lemma by induction on the size of the boxes. We will fix scales $n_k = \sum_{j=0}^k N_j$ for $N_j= n^{1/2^j}$, and bound $\lambda_k = \vert  P_\omega^z(X_{n_k} \in \Delta) - \prob^z(X_{n_k} \in \Delta)\vert $, where $\Delta$ is a box of side-length $N_k^{\theta}$ for some fixed $\theta>0$. Our scheme is to condition on time $n_{k-1}$, apply the induction hypothesis to bound $\lambda_{k-1}$, and then bound the remaining term with Lemma \ref{lem: d-box}. We iterate this step $r(n)$ times, for $r(n)$ satisfying $N_{r(n)}^{\theta} \leq L$.
\begin{proof}
By translation invariance of $P$, we will only consider the case when $x$ is the origin. The proof will use descending induction on the size of the boxes. Fix $\theta \in (0,1/2)$. For $j \in \N$, let $N_j = \lfloor n^{1/2^j} \rfloor $ and let $r(n) = \lfloor \log_2 (\frac{ \log n }{\theta \log L}) \rfloor $, which is the minimal natural number satisfying $N^\theta_{r(n)} \leq L$. Define $n_0 = n -\sum_{j=1}^{r(n)} N_j$ and $n_k = n_0 + \sum_{j=1}^k N_j$, so that $n_{r(n)} = n$. For $1 \leq k \leq r(n)$, let $\Pi_k$ be a partition of $\Z^d$ into $d$-dimensional boxes of side-length $N_k^\theta$. For a box $\Delta \in \Pi_k$, define  
\begin{align*}
\lambda(z, \Delta, n_k) &= \vert  P_\omega^z(X_{n_k} \in \Delta) - \prob^z(X_{n_k} \in \Delta)\vert , \\
M(N_k) &=  C\frac{N_{k}^{\theta d}}{n^{d/2}}\frac{R_3(N_k)}{N_k^{\theta/4} },
\end{align*}
and the induction hypothesis $\text{HK}(n_k)$:
\begin{align*} 
P(\forall z \in B(0,\tilde c n_{k}^{1/2}), \: \lambda(z,\Delta,n_{k}) > M(N_{k})) \leq  C e^{-c(\log N^\theta_{k})^\alpha}.
\end{align*}
We will show $\text{HK}(n_{k-1})$ implies $\text{HK}(n_k)$, which by induction and our choice of parameters will prove the lemma. 
The base case $k=0$ follows from Lemma \ref{lem: d-box} since $n_0 = O(n)$. 
 
 We now prove the inductive step. For fixed $\Delta \in \Pi_k$, we have
\begin{align*}
&\vert
P_{\omega}^{z} (X_{n_k} \in\Delta )-
\mathbb{P}^{z} (X_{n_k} \in\Delta)\vert
\nonumber
\\
 \leq&\sum_{\Delta' \in \Pi_{k-1}} 
\left\vert \sum_{y \in \Delta'} [ P_{\omega}^{z} (X_{n_{k-1}} =y, X_{n_k}\in\Delta ) -\mathbb{P}^{z}(X_{n_{k-1}} =y, X_{n_k}\in\Delta)] \right\vert \\
\leq &\sum_{\Delta' \in \Pi_{k-1}} \left \vert \sum_{y \in \Delta'} P_{\omega}^{z}(X_{n_{k-1}}=y)  \left[ P_{\omega}^{y}(X_{n_k - n_{k-1}} \in \Delta) - \prob^y(X_{n_k - n_{k-1}} \in \Delta) \right ] \right \vert \\
+&\sum_{\Delta' \in \Pi_{k-1}}  \left \vert \sum_{y \in \Delta'} \left[ P_{\omega}^{z}(X_{n_{k-1}}=y)  - \prob^{z}(X_{n_{k-1}}=y)  \right ] \prob^y(X_{n_{k}-n_{k-1}} \in \Delta )\right \vert\\
+&\sum_{\Delta' \in \Pi_{k-1}} \left \vert \sum_{y \in \Delta'}  \prob^{z}(X_{n_{k-1}}=y) \left[ \prob^y(X_{n_k - n_{k-1}} \in \Delta) - \prob^z(X_{n_k - n_{k-1}} \in \Delta \vert X_{n_{k-1}}=y )  \right ] \right \vert\\
  \eqqcolon& S_1 + S_2 + S_3.
\end{align*}
We now show that under the assumption $\text{HK}(n_{k-1})$, $S_1, S_2$ and $S_3$ are each bounded with high probability. We first bound $S_1$:
\begin{align*}
S_1 &= \sum_{\Delta' \in \Pi_{k-1}} \left \vert \sum_{y \in \Delta'} P_{\omega}^{z}(X_{n_{k-1}}=y)  \left[ P_{\omega}^{y}(X_{N_k} \in \Delta) - \prob^y(X_{N_k} \in \Delta) \right ] \right \vert \\
&= \sum_{\substack{\Delta' \in \Pi_{k-1}\\\text{dist}(\Delta', \Delta)\leq N_k}} \left \vert \sum_{y \in \Delta'} P_{\omega}^{z}(X_{n_{k-1}}=y)  \left[ P_{\omega}^{y}(X_{N_k} \in \Delta) - \prob^y(X_{N_k} \in \Delta) \right ] \right \vert \\
&\leq \sum_{\substack{\Delta' \in \Pi_{k-1}\\\text{dist}(\Delta', \Delta)\leq N_k}}  P_{\omega}^{z}(X_{n_{k-1}}\in \Delta') \max_{y \in \Delta'} \left \vert P_{\omega}^{y}(X_{N_k} \in \Delta) - \prob^y(X_{N_k} \in \Delta) \right \vert,
\end{align*} 
where the second equality follows from the random walk being nearest-neighbor. Define the event
\begin{equation*}
G_{k-1}  = G_{k-1}(\Delta) =  \left\{\omega \in \Omega: \begin{alignedat}{2}  \begin{gathered} \forall z \in B(0,cn^{1/2}_{k-1}),  \forall \Delta' \in \Pi_{k-1} \: \text{s.t.} \: \text{dist}(\Delta, \Delta') \leq N_k,  \\[.25cm]
\lambda(z, \Delta', n_{k-1}) < M(N_{k-1})  \end{gathered} \end{alignedat} \right \}.
\end{equation*}
Our hypothesis $\text{HK}(n_{k-1})$ and a union bound gives us
\begin{align*}
P(G_{k-1}^c) <  C\frac{N_k^d}{N^{\theta d}_k} e^{-c(\log N^{\theta}_{k-1})^{\alpha}} \leq  C' e^{-c'(\log N^{\theta}_{k-1})^{\alpha}}.
\end{align*}
For $\omega \in G_{k-1}$, we apply Lemma \ref{lem: ann-kernel2} to get 
\begin{align*}
P_{\omega}^{z}(X_{n_{k-1}}\in \Delta') &\leq\prob^{z}(X_{n_{k-1}}\in \Delta') + \vert P_{\omega}^{z}(X_{n_{k-1}}\in \Delta')  -\prob^{z}(X_{n_{k-1}}\in \Delta')  \vert \\
&\leq \frac{C N_{k-1}^{\theta d}}{n^{d/2}} + \frac{C N_{k-1}^{\theta d}}{n^{d/2}}\frac{R_3(N_{k-1})}{N^{\theta/4}_{k-1}} \\
&\leq \frac{C N_{k-1}^{\theta d}}{n^{d/2}}.
\end{align*}
Hence for $\omega \in G_{k-1}$ and $z \in B(0, c n_{k-1}^{1/2})$, we have
\begin{align*}
S_1 \leq \frac{C N_{k-1}^{\theta d}}{n^{d/2}} \sum_{\substack{\Delta' \in \Pi_{k-1}\\\text{dist}(\Delta', \Delta)\leq N_k}} \max_{y \in \Delta'} \left \vert P_{\omega}^{y}(X_{N_k} \in \Delta) - \prob^y(X_{N_k} \in \Delta) \right \vert.
\end{align*}
 We say $\Delta' \in \Pi_{k-1} $ is good if for every $y \in \Delta'$, we have for every $\Delta \in \Pi_k$
\begin{align} \label{eq: cond1}
\left \vert P_{\omega}^{y}(X_{N_k} \in \Delta) - \prob^y(X_{N_k} \in \Delta) \right \vert \leq \frac{C N_k^{\theta d}}{N_k^{d/2}}\frac{R_3(N_k)}{N_k^{\theta/4}},
\end{align}
and
\begin{align}\label{eq: cond2}
P_\omega^y \left( \norm{ X_{N_k} - \E^z[X_{N_k}]}_\infty  > R_{5}(N_k)N^{1/2}_k\right) &\leq C\exp(-c R_5(N_k)).
\end{align}
Else, we say $\Delta'$ is bad. By Lemma \ref{lem: exit-ball}, Lemma \ref{lem: d-box} and Markov's inequality, we have for any $\Delta'$
\begin{align*}  
P(\omega \in \Omega: \Delta'\: \: \text{is bad}) \leq  C\exp(-c (\log N_k)^\alpha).
\end{align*}
Define the event
\begin{align*}
A_k  = A_k(\Delta)=  \left\{ \omega \in \Omega: \forall \Delta' \in \Pi_{k-1} \: \text{s.t.} \: \text{dist}(\Delta, \Delta') \leq N_k, \Delta' \: \text{is good}  \right\}.
\end{align*}
From the union bound, $P(A_k^c) \leq C\exp(-c (\log N_k)^\alpha)$. For $\omega \in A_k$, we have by \eqref{eq: cond2} that if $\Delta'$ satisfies 
\begin{align*}
\max_{y \in \Delta'} \text{dist}(y + \E^y[X_{N_k}], \Delta) > R_5(N_k) N_k^{1/2}, 
\end{align*}
then
\begin{align*}
\max_{y \in \Delta'} \left \vert P_{\omega}^{y}(X_{N_k} \in \Delta) - \prob^y(X_{N_k} \in \Delta )\right \vert = N_k^{-\xi(1)}.
\end{align*}
The number of such boxes $\Delta'$ is at most
\begin{align*}
\frac{CR_5(N_k)N_k^{d/2}}{\vert \Delta' \vert} \leq \frac{CR_5(N_k) N_k^{d/2}}{N_{k-1}^{\theta d}}.
\end{align*}
Hence for $\omega \in A_k \cap G_{k-1}$ and $z \in B(0, c n_{k-1}^{1/2})$, we can apply \eqref{eq: cond1} and the previous estimate to get
\begin{align*}
S_1 &\leq  \frac{C N_{k-1}^{\theta d}}{n^{d/2}} \left( \frac{CR_5(N_k) N_k^{d/2}}{N_{k-1}^{\theta d}} \cdot \frac{C N_k^{\theta d}}{N_k^{d/2}}\frac{R_3(N_k)}{N_k^{\theta/4}} + N_k^{-\xi(1)} \right)\\
&\leq \frac{C N_k^{\theta d}}{n^{d/2}}\frac{R_3(N_k)}{N_k^{\theta /4}}.
\end{align*}
Next, we bound $S_2$:
\begin{align*}
S_2 & = \sum_{\Delta' \in \Pi_{k-1}}  \left \vert \sum_{y \in \Delta'}\prob^y(X_{N_k} \in \Delta ) \left[ P_{\omega}^{z}(X_{n_{k-1}}=y)  - \prob^{z}(X_{n_{k-1}}=y)  \right ] \right \vert\\
  & \leq \sum_{\Delta' \in \Pi_{k-1}}  \max_{y \in \Delta'} \prob^y(X_{N_k} \in \Delta) \left\vert P_{\omega}^{z}(X_{n_{k-1}} \in \Delta')  - \prob^{z}(X_{n_{k-1}} \in \Delta')  \right \vert\\
  & = \sum_{\substack{\Delta' \in \Pi_{k-1}\\ \text{dist}(\Delta, \Delta')\leq N_k}}  \max_{y \in \Delta'} \prob^y(X_{N_k} \in \Delta) \left\vert P_{\omega}^{z}(X_{n_{k-1}} \in \Delta')  - \prob^{z}(X_{n_{k-1}} \in \Delta')  \right \vert.
\end{align*}
For $\omega \in G_{k-1}$ and $z \in B(0, c n_{k-1}^{1/2})$, we have
\begin{align*}
S_2  &\leq \frac{C N_{k-1}^{\theta d}}{n^{d/2}}\frac{R_3(N_k)}{N^{\theta/4}_{k-1}}  \sum_{\substack{\Delta' \in \Pi_{k-1}\\ \text{dist}(\Delta, \Delta') \leq N_k}}  \max_{y \in \Delta'} \prob^y(X_{N_k} \in \Delta).
\end{align*}
Since we are dealing with the annealed measure, we can apply the same analysis for bounding $S_1$ and get
\begin{align*}
 &\sum_{\substack{\Delta' \in \Pi_{k-1}\\ \text{dist}(\Delta, \Delta') \leq N_k}}  \max_{y \in \Delta'} \prob^y(X_{N_k} \in \Delta) \\
 &\leq  \sum_{\substack{\Delta' \in \Pi_{k-1}\\ \max_{y \in \Delta'}\text{dist}(y + \E^y[X_{N_k}], \Delta) \leq R_5(N_k)N_k^{1/2}}}  \max_{y \in \Delta'} \prob^y(X_{N_k} \in \Delta) + N_k^{-\xi(1)} \\
 &\leq \frac{CR_5(N_k) N_k^{d/2}}{N_{k-1}^{\theta d}} \cdot \frac{C N_k^{\theta d}}{N_k^{d/2}}\\
 &\leq \frac{CR^d_5(N_k)}{N_k^{\theta d}}.
\end{align*}
Hence for $\omega \in G_{k-1}$ and $z \in B(0, c n_{k-1}^{1/2})$, we have
\begin{align*}
S_2 \leq \frac{CN_{k-1}^{\theta d}}{n^{d/2}}  \frac{R_3(N_{k-1})}{N_{k-1}^{\theta/4}} \cdot \frac{CR^d_5(N_k)}{N_k^{\theta d}} \leq \frac{C N_k^{\theta d}}{n^{d/2}}\frac{R_6(N_{k-1})R^d_5(N_{k})}{N_{k-1}^{\theta /4}}.
\end{align*}
Finally, we bound $S_3$:
\begin{align*}
S_3 = &\sum_{\Delta' \in \Pi_{k-1}} \left \vert \sum_{y \in \Delta'}  \prob^{z}(X_{n_{k-1}}=y) \left[ \prob^y(X_{N_k} \in \Delta) - \prob^z(X_{N_k} \in \Delta \vert X_{n_{k-1}}=y )  \right ] \right \vert\\
   &\leq \sum_{\Delta' \in \Pi_{k-1}} \prob^z(X_{n_{k-1}} \in \Delta')\left \vert \max_{y \in \Delta'} \prob^y(X_{N_k} \in \Delta) - \min_{y \in \Delta'} \prob^z(X_{N_k} \in \Delta  \vert X_{n_{k-1}}=y )  \right \vert\\
   &\leq C \frac{N^{\theta d}_{k-1}}{n^{d/2}}\sum_{\Delta' \in \Pi_{k-1}} \left \vert \max_{y \in \Delta'} \prob^y(X_{N_k} \in \Delta) - \min_{y \in \Delta'} \prob^z(X_{N_k} \in \Delta  \vert X_{n_{k-1}}=y )  \right \vert\\
  &= C \frac{N^{\theta d}_{k-1}}{n^{d/2}}\sum_{\substack{\Delta' \in \Pi_{k-1}\\\text{dist}(\Delta,\Delta') \leq N_k}} \left \vert \max_{y \in \Delta'} \prob^y(X_{N_k} \in \Delta) - \min_{y \in \Delta'} \prob^z(X_{N_k} \in \Delta  \vert X_{n_{k-1}}=y )  \right \vert,
\end{align*}
where the second inequality follows from Lemma \ref{lem: ann-kernel1}. For any $x,y \in \Delta'$, we have from Lemma \ref{lem: ann-kernel1}
\begin{align*}
\left \vert  \prob^y(X_{n_{k}-n_{k-1}} \in \Delta) - \prob^z(X_{n_{k}} \in \Delta \vert X_{n_{k-1}}=x )  \right \vert &\leq \frac{\vert x - y \vert  \cdot C N_k^{\theta d}  }{N^{(d+1)/2}_{k}}\leq \frac{C N_k^{\theta d} N^{\theta}_{k-1}}{N^{(d+1)/2}_{k}}.
\end{align*}
From earlier analysis, we observe that the boxes $\Delta'$ satisfying $\max_{y \in \Delta'}\text{dist}(y + \E^y[X_{N_k}], \Delta) > R_5(N_k)N_k^{1/2}$ have a negligible contribution to the sum. We thus get for $P$-a.e. $\omega$
\begin{align*}
S_3 &\leq C\frac{N^{\theta d}_{k-1}}{n^{d/2}}\sum_{\substack{\Delta' \in \Pi_{k-1}\\ \max_{y \in \Delta'}\text{dist}(y + \E^y[X_{N_k}], \Delta) \leq R_5(N_k)N_k^{1/2}}} \frac{C N_k^{\theta d} N^{\theta}_{k-1}}{N^{(d+1)/2}_{k}} \\
 &\leq C\frac{N^{\theta d}_{k-1}}{n^{d/2}} \cdot \frac{R^d_5(N_{k}) N^{d/2}_{k}}{N^{\theta d}_{k-1}} \cdot \frac{C N_k^{\theta d} N^{\theta}_{k-1}}{N^{(d+1)/2}_{k}} \\
 &\leq C\frac{N_k^{\theta d}}{n^{d/2}}\frac{N^\theta_{k-1}}{N_k^{1/2}}.
\end{align*}
To summarize, we have shown that for $\omega \in A_k \cap G_{k-1}$ and $z \in B(0, c n_{k-1}^{1/2})$, we have
\begin{align*}
\lambda(z,\Delta, n_{k}) &\leq \frac{CN^{\theta d}_{k}}{n^{d/2}}\left(\frac{R_3(N_k)}{N_k^{\theta/4}} + \frac{R_3(N_{k-1})R^d_5(N_{k})}{N_{k-1}^{\theta/4}} + \frac{N^\theta_{k-1}}{N_k^{1/2}} \right) \\
 &\leq \frac{CN^{\theta d}_{k}}{n^{d/2}}\frac{R_5(N_k)}{N_k^{\theta /4}},
\end{align*}
where the last inequality is due to $N_{k-1} =  N^2_{k}$ and $\theta< 1/2$. Applying this step iteratively, we get
\begin{align*}
\bigcap_{k=1}^{r(n)} A_k \cap G_{k-1} \subset \{ \forall z \in B(0,cn^{1/2}), \: \lambda(z,\Delta,L) > M(L) \}.
\end{align*}
We are only left to bound the probability of this event:
\begin{align*}
P\left(\bigcup_{k=1}^{r(n)} A^c_k \cup G^c_{k-1}  \right) \leq\sum_{k=1}^{r(n)}(P(A^c_k)+ P(G^c_{k-1}))\leq \sum_{k=1}^{r(n)}2 e^{-c (\log N_k)^\alpha} \leq C e^{-c (\log L )^\alpha}
\end{align*}
for any $\alpha < d$, which finishes the proof.
\end{proof}
\begin{remark} \label{rem: box-switch}
We will later also need a bound for hitting a finite $(d-1)$-dimensional box in a finite interval. Let $\Delta$ be a $d$-dimensional box with side-length $L$ centered at $c(\Delta)$. Let $\tilde \Delta$ be a $(d-1)$-dimensional box with side-length $\lfloor L/10 \rfloor$, also centered at $c(\Delta)$, and let $\tilde I = [n-\lfloor L/10 \rfloor, n+\lfloor L/10 \rfloor]$. Since our random walk is nearest-neighbor, we have
\begin{align*}
P_\omega^z(X_{T} \in \tilde \Delta, T \in \tilde I ) = P_\omega^z(X_{T} \in \tilde \Delta, T \in \tilde I, X_n \in \Delta) \leq P_\omega^z(X_n \in \Delta).
\end{align*}
We thus have
\begin{align*}
&P\left( \forall z \in \tilde{\mathcal P}(x,n^{1/2}), P_\omega^z(X_{T} \in \tilde \Delta, T \in \tilde I ) >C \lfloor L/10 \rfloor^d /n^{d/2}  \right) \\
&\leq P\left( \forall z \in B(x,\tilde cn^{1/2}),  P_\omega^z(X_n \in \Delta) >C \lfloor L/10 \rfloor^d /n^{d/2}   \right) \\
&\leq P\left( \forall z \in B(x,\tilde cn^{1/2}),  \vert P_\omega^z(X_n \in \Delta)- \prob^z(X_n \in \Delta) \vert +\prob^z(X_n \in \Delta)   >C \lfloor L/10 \rfloor^d /n^{d/2}   \right) \\
&\leq P\left( \forall z \in B(x,\tilde cn^{1/2}),  \vert P_\omega^z(X_n \in \Delta)- \prob^z(X_n \in \Delta) \vert  >\tilde C L^d/n^{d/2}  \right) \\
&\leq Ce^{-c(\log L)^{\alpha}},
\end{align*}
where the third inequality follows from Lemma \ref{lem: ann-kernel1}. Hence we have an upper bound on $P_\omega^z(X_{T} \in \tilde \Delta, T \in \tilde I ) $ for typical environments.
\end{remark}
Before we prove the upper bound in Proposition \ref{prop: radon-approx}, we need one last auxiliary result. 
\begin{lemma} \label{lem: mdp}
Assume $P$ is uniformly elliptic, i.i.d. and satisfies condition $(T)$. For $d \geq 3$, there exists constants $c,C>0$ such that for $u > 1$ we have
\begin{align*}
\prob^z(\norm{X_n - \E^z[X_n]}_\infty > u \cdot n^{1/2}) \leq C e^{-cu} + Ce^{-c(\log n)^2}.
\end{align*}
\end{lemma}
\begin{proof}
Let $Z_k = X_{\tau_k} - X_{\tau_{k-1}}$, and define the event
\begin{align*}
A_n = \{\forall k \in [n], \norm{ \tau_k - \tau_{k-1}}_\infty < R_1(n)  \}.
\end{align*}
By Theorem \ref{thm: regen}, \eqref{eq: berger-slowdown} and Theorem \ref{thm: slowdown}, we have $\prob^z(A_n^c) \leq Ce^{-c(\log R_1(n))^\alpha}$ for every $\alpha<d$. Since $d \geq 3$, we can choose $\alpha$ arbitrarily close to $3$ such that
\begin{align*}
\prob^z(A_n^c)  \leq e^{-c(\log R_1(n))^\alpha} = e^{-c(\log n)^{3\alpha/4}} \leq e^{-c(\log n)^2} 
\end{align*} 
Letting $\gamma_n = \max \{k \geq 0: \tau_{k} \leq n\}$, we have
\begin{align*}
X_n - X_0 = \sum_{k=1}^{\gamma_n} Z_k + (X_n-X_{\tau_{\gamma_n}}),
\end{align*}
which implies
\begin{align*}
\prob^z(\norm{X_n - \E^z[X_n]}_\infty > u \cdot n^{1/2}) \leq \prob^z \left(\sum_{k=1}^{n} \norm{Z_k - \E^z[Z_k]}_\infty > u \cdot n^{1/2}/2 , A_n\right) + P(A_n^c),
\end{align*}
where we used the fact $\gamma_n \leq n$ and $\norm{X_n-X_{\tau_{\gamma_n}}}_\infty \leq R_1(n)$ on $A_n$. Applying Bernstein's inequality, see \cite[Section 2.8]{boucheron2013concentration}, and Theorem \ref{thm: regen}, we have for $u >1$
\begin{align*}
 \prob^z \left(\sum_{k=1}^{n} \norm{Z_k - \E^z[Z_k]}_\infty > u \cdot n^{1/2}/2 , A_n\right) \leq \exp\left(-\frac{cu^2 n }{ c n + c uR_1(n)n^{1/2}}  \right) \leq e^{-c'u}.
\end{align*}
Putting everything together gives us
\begin{align*}
\prob^z(\norm{X_n - \E^z[X_n]}_\infty > u \cdot n^{1/2}) \leq e^{-c'u} + Ce^{-c (\log n)^2}.
\end{align*}
\end{proof}
\begin{proof}[Proof of the upper bound in Proposition \ref{prop: radon-approx}]
Define $L$ to be the largest integer such that $L^d \leq u/4$. For any $\epsilon> 0$, we have
\begin{align*}
P\left( \sum_{z \in \Z^d}P_\omega^z(X_n = 0) > u\right) &\leq P\left( \sum_{z \in  B(-nv, L^\epsilon \cdot n^{1/2})} P_\omega^z(X_n = 0) > u/2  \right)  \\
& +  P\left( \sum_{z \not \in B(-nv, L^\epsilon \cdot n^{1/2})} P_\omega^z(X_n = 0) > u/2  \right)  .
\end{align*}
Let $\Delta$ be a $d$-dimensional box centered at the origin with side-length $L$. We bound the first term:
\begin{align*}
&P\left(  \sum_{z \in B(-nv, L^\epsilon \cdot n^{1/2})}P_\omega^z(X_n = 0) > u/2   \right) \\
&\leq P\left(  \sum_{z \in B(-nv,L^\epsilon \cdot n^{1/2})}P_\omega^z(X_n \in \Delta)  > u/2 \right) \\
&\leq P\left(  \sum_{z \in B(-nv,L^\epsilon \cdot n^{1/2})} \left( \prob^z(X_n \in \Delta) +  \vert \prob^z(X_n \in \Delta) - P_\omega^z(X_n \in \Delta)  \vert \right)  > u/2   \right) .
\end{align*}
Using translation invariance of $\prob$,
\begin{align*}
\sum_{z \in \Z^d}\prob^z(X_n \in \Delta) &= \sum_{z \in \Z^d}\prob^0(X_n \in \Delta-z) \\
&= \E^0\left[ \sum_{x \in \Z^d} \1_{\{X_n = x\}} \sum_{z \in \Z^d} \1_{\{x+ z \in \Delta\}}\right] \\
&= \vert \Delta \vert \cdot \E^0\left[ \sum_{x \in \Z^d} \1_{\{X_n = x\}} \right] \\
&= \vert \Delta \vert \\
& \leq u/4,
\end{align*}
where the last inequality follows from the definition of $L$ and $\Delta$. We then have
\begin{align*}
&P\left( \sum_{z \in  B(-nv, L^\epsilon \cdot n^{1/2})} P_\omega^z(X_n = 0) > u/2  \right)\\
& \leq P\left( \sum_{z \in  B(-nv, L^\epsilon \cdot n^{1/2})}\vert \prob^z(X_n \in \Delta) - P_\omega^z(X_n \in \Delta)  \vert > u/4 \right)\\
&\leq P\left(    \exists z \in B(-nv,L^\epsilon \cdot n^{1/2}), \: \: \vert \prob^z(X_n \in \Delta) - P_\omega^z(X_n \in \Delta)  \vert >C\frac{u}{n^{d/2}L^{\epsilon d}}   \right) \\
&\leq P\left(    \exists z \in B(-nv,L^\epsilon \cdot n^{1/2}), \: \: \vert \prob^z(X_n \in \Delta) - P_\omega^z(X_n \in \Delta)  \vert >C \frac{u}{n^{d/2}}\frac{R_3(L)}{L^{1/4}}   \right) \\
& \leq C\exp(-c(\log u)^\alpha),
\end{align*}
where the last inequality holds from Lemma \ref{lem: smallbox-upper} and the fact that $L \asymp u^{1/d}$. Note that the fourth inequality holds by choosing $\epsilon>0$ small enough. We bound the second term:
\begin{align*}
 P\left( \sum_{z \not \in B(-nv, L^\epsilon \cdot n^{1/2})} P_\omega^z(X_n = 0) > u/2  \right) &\leq \frac{2}{u} \sum_{z \not \in B(-nv, L^\epsilon \cdot n^{1/2})} \prob^z(X_n = 0) \\
&= \frac{2}{u} \sum_{z \not \in B(nv, L^\epsilon \cdot n^{1/2})} \prob^0(X_n = z) \\
&= \frac{2}{u}  \prob^0(X_n \not \in B(nv, L^\epsilon \cdot n^{1/2})),
\end{align*}
where the first inequality is Markov's inequality and the first equality is by translation invariance of $\prob$. Choosing $L$ large enough such that $L^{\epsilon}>1$, we can apply Lemma \ref{lem: mdp} and get
\begin{align*}
 \prob^0(X_n \not \in B(nv, L^\epsilon \cdot n^{1/2})) \leq C e^{-cL^{\epsilon}} + Ce^{-c (\log n)^2}.
\end{align*}
Putting everything together, we conclude that
\begin{align*}
P\left( \sum_{z \in \Z^d}P_\omega^z(X_n = 0) > u\right) & \leq C e^{-c(\log u)^{\alpha}} + C e^{-cL^{\epsilon}}+ Ce^{-c (\log n)^2}\\
& \leq C' e^{-c(\log u)^{\alpha}} + Ce^{-c (\log n)^2}.
\end{align*}

\end{proof}

\subsection{Lower bound}
The main result in this section is Lemma \ref{lem: smallbox-lower}, where we derive a lower bound for $P_\omega^z(X_n=0)$ by placing a trap at the origin. First, we will need a lower bound for the annealed heat kernel.
\begin{lemma} \label{lem: ann-lower}
Let $d \geq 2$ and assume $P$ is uniformly elliptic, i.i.d. and satisfies condition $(T)$.  There exist constants $C,c>0$ such that for $z \in \Z^d$ and $x \in H_{n + \langle z, e_1 \rangle}$ satisfying $\norm{x - \E^z[X_{T_{n+ \langle z, e_1 \rangle}}]}_\infty < C n^{1/2}$ and $m \in [\E^z[T_{n+ \langle z, e_1 \rangle}]- Cn^{1/2},\E^z[T_{n+ \langle z, e_1 \rangle}] + Cn^{1/2}]$, we have
\begin{align*}
\prob^z(X_{T_{n+ \langle z, e_1 \rangle}}  = x,T_{n+ \langle z, e_1 \rangle}= m) > c/n^{d/2}.
\end{align*}
\end{lemma}
\begin{proof}
The argument closely follows the proof of \cite[ Lemma 4.4]{berger2012slowdown}. By translation invariance of $P$, it is enough to prove the lower bound for the case when $z=0$. For $k,n \in \N$, denote 
\begin{align*}
B(n,k) = \{ \langle X_{\tau_k} , e_1 \rangle = n\}
\end{align*}
and $B(n) = \cup_{k \in \N} B(n,k)$. Suppose that $x \in H_{n}$ and $ m \in \N$ satisfy 
\begin{align}\label{eq: assume}
&\vert x - k \E^z[X_{\tau_2}-X_{\tau_1}] \vert < Ck^{1/2} \qquad \text{and} \qquad \vert m - k\E^z[\tau_2-\tau_1] \vert < C k^{1/2}
\end{align}
 for $k \in [M-M^{1/2}, M+M^{1/2}] $ for some $M$ which will be chosen later and satisfies $M = O(n)$. By Theorem \ref{thm: regen} and the local central limit theorem for sum of i.i.d. lattice-valued random variables, see \cite[Theorem 2.1.1]{lawler2010random}, under \eqref{eq: assume} we have
\begin{align*}
\prob^0(X_{T_{n}} = x,T_{n}= m, B(n,k) )=\prob^0(X_{\tau_k} = x,\tau_k= m, B(n,k) ) \geq c/k^{(d+1)/2}.
\end{align*}
This yields us
\begin{align*} 
\prob^0(X_{T_{n}} = x, T_{n} = m ) &\geq \prob^0(X_{T_{n}} = x, T_{n} = m,B(n) ) \\
&\geq \sum_{k =  M-M^{1/2} }^{M+M^{1/2}} \prob^0(X_{\tau_k} = x, \tau_k = m, B(n,k) )  \\
&\geq 2 c M^{1/2}/n^{(d+1)/2} \\
&=O (n^{-d/2}).
\end{align*}
Hence it is enough to show that if $x$ and $m$ satisfy the assumptions of the lemma, then they also satisfy \eqref{eq: assume}. Let $\beta_n = \inf \{j \in \N : \langle X_{\tau_{j+1}} -X_{\tau_1}, \ell \rangle \geq  n \}$, and note that
\begin{align*}
\{\beta_n = k\} = \left\{\sum_{j=1}^{k-1} \langle X_{\tau_{j+1}}-X_{\tau_{j}}, \ell \rangle    < n \leq \sum_{j=1}^{k} \langle X_{\tau_{j+1}}-X_{\tau_{j}}, \ell \rangle  \right \}.
\end{align*}
Thus $\beta_n$ is a stopping time for the filtration of the process $\{X_{\tau_{j+1}}-X_{\tau_{j}}: j \in \N \}$, and by Wald's identity we have 
\begin{align*}
\E^0[\beta_n ]\E^0[X_{\tau_2}-X_{\tau_1}]= \E^0 \left[\sum_{j=1}^{\beta_n}(X_{\tau_{j+1}}-X_{\tau_{j}}) \right].
\end{align*}
Hence we can write
\begin{align*}
  \E^0[X_{T_n}] =\E^0[ X_{T_n}-X_{\beta_n+1}] +\E^0[\beta_n ]\E^0[X_{\tau_2}-X_{\tau_1}].
\end{align*}
Let $M = \E^0[\beta_n ]$ and note that $M \asymp n$, see \cite[Lemma 5.1]{sznitman2000slowdown}. For $k \in  [M-M^{1/2},M+M^{1/2}]$, we have $\norm{x - \E^0[X_{T_n}]}_\infty < Cn^{1/2} = O(k^{1/2})$, which implies
\begin{align*}
\norm{x - k\E^0[X_{\tau_2} -X_{\tau_1}]}_\infty &\leq  \norm{ x - \E^0[X_{T_n}]}_\infty  + \norm{\E^0[X_{T_n}]- k\E^0[X_{\tau_2} -X_{\tau_1}] }_\infty \\
&\leq C k^{1/2} + \norm{  M \E^0[X_{\tau_2}-X_{\tau_1}]+\E^0[X_{T_n}-X_{\beta_n+1} ]-k\E^0[X_{\tau_2} -X_{\tau_1}] }_\infty \\
&\leq C k^{1/2} +  C M^{1/2} \norm{\E^0 [X_{\tau_2}-X_{\tau_1}]}_\infty \\
&= O(k^{1/2}).
\end{align*}
Applying a similar analysis, we have that if $m \in [\E^0[T_n]- Cn^{1/2},\E^0[T_n] + Cn^{1/2}]$, then $m \in [ k \E^0[\tau_2 - \tau_1]- Ck^{1/2},k \E^0[\tau_2 - \tau_1] + Ck^{1/2}]$ for $k \in [M-M^{1/2},M+M^{1/2}]$. This finishes the proof.
\end{proof}

\begin{lemma} \label{lem: smallbox-lower}
Let $d \geq 2$ and assume $P$ is uniformly elliptic, i.i.d., satisfies condition $(T)$ and is nestling. There exist constants $c,c',C>0$ such that for all $u>1$ and for large enough $n \in \N$, we have
\begin{align*}
P( \forall z \in B(-nv, c'n^{1/2}), \: \: P_\omega^z (X_n=0) > u/n^{d/2}) \geq C e^{-c (\log u)^d}.
\end{align*}
\end{lemma}
\begin{proof}
Fix $L \geq 1$, and define the parallelogram centered at the origin
\begin{align*}
\Delta = \left \{ x\in \Z^d: \vert \langle x , e_1 \rangle \vert < L, \norm{x  -\vartheta \cdot \frac{\langle x,e_1 \rangle}{\langle \vartheta, e_1 \rangle} }_\infty<L \right\}.
\end{align*}
To prove the lower bound, we will need to consider two events. The first is the atypical event that $\Delta$ is a trap. The second is the typical event that quenched random walk behaves like the annealed random walk up until the first time it reaches $\Delta$.
 
We recall the na\"{i}ve trap event from \cite{sznitman2000slowdown}
\begin{align*}
 E_L = \left \{\omega \in \Omega : \forall y \in \Delta \setminus \{ 0 \}, \: \langle d(y,\omega) , y/\vert y \vert \rangle \leq- c_{2} \right \},
\end{align*}
where $c_{2}>0$ is some constant, see Figure \ref{fig: trap}. We will need the following estimate.
\begin{figure}[t!]  \centering
    \includegraphics[scale=0.50]{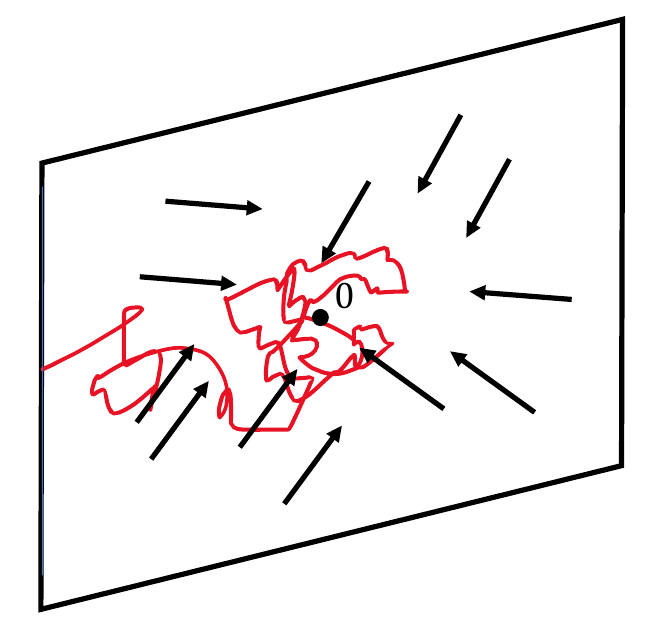}
    \caption{\textit{The na\"{i}ve trap is the environment where all the drifts in $\Delta$ are roughly pointing towards the origin. The random walk (in red) will typically spend a long time in this trap.}}
        \label{fig: trap}
\end{figure}
\begin{lemma}\label{lem: trap-lowerbound}
Suppose $\omega \in  E_L$. There exist positive constants $c_3, c_5, p_0$ such that for $y \in  \Delta \setminus B(0,c_3)$
\begin{align*}
P_\omega^y \left(  T_{\{0\}} > c_5 L    \right) < 1- p_0.
\end{align*}
\end{lemma}
\begin{proof}
 We first recall a few facts about the na\"{i}ve trap. From \cite[Lemma 2.8]{sznitman2000slowdown}, we see that there exist positive constants $c_3, c_4$, such that for $y \in  \Delta \setminus B_2(0,c_3)$ and $\omega \in E_L$, 
\begin{align*}
M_n = \exp(c_4 \vert X_{n \wedge T_{B_2(0,c_3)} \wedge T_{ \Delta^c}} \vert)
\end{align*} 
is a $P_\omega^y$-supermartingale. By uniform ellipticity, it is enough to prove the statement of the lemma with $T_{\{0\}}$ replaced by $T_{B_2(0,c_3)}$. We decompose this probability
\begin{align*}
P_\omega^y(T_{B_2(0,c_3)}> c_5 L) &\leq P_\omega^y(T_{\Delta^c} < T_{B_2(0,c_3)}) + P_\omega^y(T_{\Delta^c}> T_{B_2(0,c_3)}> c_5L) .
\end{align*}
By the optional stopping theorem applied to $M_n$, we obtain for any $y \in \Delta \setminus B_2(0,c_3)$ and $u>0$
\begin{align}\label{eq: trap-ineq}
P_\omega^y(  \vert X_{n \wedge T_{B_2(0,c_3)} \wedge T_{ \Delta^c}} \vert > u )\leq e^{-c_4(\vert y \vert - u)}.
\end{align}
Applying this inequality and uniform ellipticity, we obtain
\begin{align*} 
P_\omega^y(T_{ \Delta^c} < T_{B_2(0,c_3)}) \leq (1-p_0)/2
\end{align*}
for some $p_0\in (0,1)$. To bound the second summand, we note that from the proof of \cite[Lemma 2.8]{sznitman2000slowdown}, for $x \in \Delta \setminus B_2(0,c_3)$ we have
\begin{align*}
E_\omega^x[e^{\lambda \vert X_1 \vert}] \leq e^{\lambda \vert x \vert }\left(1 + \lambda \langle d(x,\omega), \frac{x}{\vert x \vert} \rangle  + O(\lambda/ \vert x \vert) + O(\lambda^2) \right).
\end{align*}
Since $\vert x \vert \geq c_3$, and by the definition of $E_L$, we can choose $\lambda$ small enough and $c_3$ large enough such that 
\begin{align*}
E_\omega^x[e^{\lambda \vert X_1 \vert}] \leq e^{\lambda \vert x \vert }\left(1 - c_2 \lambda/2  \right) \leq e^{\lambda \vert x \vert - c_2 \lambda /2} .
\end{align*}
From this estimate and the Markov property,
\begin{align*}
E_\omega^x[e^{\lambda \vert X_n  \vert}\1_{\{  T_{\Delta^c} \wedge T_{B_2(0,c_3)}  \geq n  \} }]&= E_\omega^x[ E_\omega^{X_{n-1}}[ e^{\lambda \vert X_n  \vert} \1_{\{  T_{\Delta^c} \wedge T_{B_2(0,c_3)} \geq n  \} } ] ] \\ 
&\leq e^{ - c_2 \lambda /2} E_\omega^x[ e^{\lambda \vert X_{n-1} \vert}  \1_{\{  T_{\Delta^c} \wedge T_{B_2(0,c_3)} \geq n  \} }],
\end{align*}
and continuing iteratively, we get
\begin{align*}
E_\omega^x[e^{\lambda \vert X_n  \vert}\1_{\{  T_{\Delta^c} \wedge T_{B_2(0,c_3)}\geq n\}  }  ] \leq e^{ - c_2 \lambda n /2 + \lambda \vert x \vert}.
\end{align*}
Applying Chebyshev's inequality and setting $n = c_5L$, we obtain
\begin{align*}
P_\omega^y(T_{\Delta^c}> T_{B_2(0,c_3)}> c_5 L) \leq P_\omega^y(\vert X_{c_5 L } \vert > c_3,T_{\Delta^c} \wedge T_{B_2(0,c_3)}  \geq c_5 L ) \leq \exp( -c_2 c_5 \lambda  L /2 + \lambda L).
\end{align*}
Taking $c_5$ large enough finishes the proof.
\end{proof}

Next, we define the typical event.  Fix $\epsilon>0$. Define $\partial^- \Delta =  \Delta \cap H_{-L}$, and partition it into $(d-1)$-dimensional boxes $\tilde \Delta$ of side-length $\lfloor L^\epsilon \rfloor$. Let $c$ be as in Lemma \ref{lem: ann-lower}, and let $I = [(n - L)/\langle v,e_1 \rangle - cn^{1/2},  (n-L)/\langle v,e_1 \rangle +cn^{1/2}]$ and partition it into intervals $\tilde I$ of side-length $\lfloor L^\epsilon \rfloor$. From Remark \ref{rem: box-switch}, we have 
\begin{align*}
P(\exists z \in \tilde {\mathcal P}(-nv, n^{1/2}), \:  P_\omega^z(X_{T_{-L}} \in \tilde \Delta , T_{-L} \in \tilde I) > C L^{\epsilon d}/n^{d/2})  = L^{-\xi(1)}.
\end{align*}
 Define the event 
\begin{equation*}
 F = F(n,L, \epsilon)  = \left\{\omega \in \Omega : \begin{alignedat}{2} \begin{gathered}  \forall z \in B(-nv, cn^{1/2}), \forall \tilde \Delta \subset \partial^- \Delta , \forall \tilde I \subset I, \\[.25cm]   P_\omega^z(X_{T_{-L}} \in \tilde \Delta , T_{-L} \in \tilde I)  \leq \frac{C L^{\epsilon d}}{n^{d/2}}\end{gathered}\end{alignedat} \right \}.
\end{equation*}
Choosing $c$ small enough, we have $B(-nv,c n^{1/2}) \subset \tilde{\mathcal P}(-nv, n^{1/2})$. Hence from the union bound we have $P(F) = 1- L^{-\xi(1)}$. 

We will now prove a lower bound for $P^z_\omega(X_n= 0)$ when $\omega \in  F \cap  E_L $. We have
\begin{align*}
P_\omega^z(X_n = 0) &= \sum_{y \in H_{-L} }\sum_{m < n}P_\omega^z(X_{T_{\{0\}}} = y,T_{\{0\}}=m ) P_\omega^y(X_{n-m} = 0) \\
 &\geq \sum_{y \in  \partial^- \Delta }\sum_{m \in I}P_\omega^z(X_{T_{-L}} = y,T_{-L}=m ) P_\omega^y(X_{n-m} = 0) \\
 &= \sum_{ \tilde \Delta \subset \partial^- \Delta} \sum_{\tilde I \subset I} \sum_{y \in \tilde \Delta}\sum_{m \in \tilde I } P_\omega^z(X_{T_{-L}} = y,T_{-L}=m ) P_\omega^y(X_{n-m} = 0).
\end{align*}
We force the random walk to hit the origin
\begin{align*}
P_\omega^y(X_{n-m} = 0) &> P_\omega^y(X_{n-m}=0, T_{\{0\}}< c L) \\
 &=\sum_{k=L}^{\lfloor cL \rfloor } P_\omega^y(T_{\{0\}}=k)P_\omega^0(X_{n-m-k}=0)  \\
  &\geq p_0 \min_{k \in[L, c L  ]} P_\omega^0(X_{n-m-k}=0),
\end{align*}
where the last lower bound is from Lemma \ref{lem: trap-lowerbound}. From this inequality, and that $\omega \in  F \cap  E_L $, we have
\begin{align*}
P_\omega^z(X_n = 0) &\geq C \sum_{\tilde \Delta \subset \partial^- \Delta} \sum_{\tilde I \subset I} \sum_{y \in \tilde \Delta}\sum_{m \in \tilde I } P_\omega^z(X_{T_{-L}} =y,T_{-L}=m )\min_{k \in[L, c L  ]} P_\omega^0(X_{n-m-k}=0)\\
&\geq C \sum_{\tilde \Delta \subset \partial^- \Delta} \sum_{\tilde I \subset I}  P_\omega^z(X_{T_{-L}} \in \tilde \Delta,T_{-L} \in \tilde I ) \min_{ m \in \tilde I }\min_{k \in[L, c L  ]} P_\omega^0(X_{n-m-k}=0)\\
  &\geq   C \frac{L^{\epsilon + d-1}}{n^{d/2}}\sum_{\tilde I \subset I}\min_{m \in \tilde I } \min_{k \in[L, c L  ]} P_\omega^0(X_{n-m-k}=0).
\end{align*}
To get a lower bound for $P_\omega^0(X_{n-m-k}=0)$, we can force the random walk to hit $\partial B_2(0,c_3)$, then successively make sojourns to $\Delta \setminus B_2(0,c_3)$ for at most $n-m-k$ times, and then return to the origin. This, with \eqref{eq: trap-ineq} and uniform ellipticity, yields 
\begin{align*}
P_\omega^0(X_{n-m-k}=0) \geq \kappa^{2 \sqrt{d}c_3} \left(\min_{x \in \partial B_2(0,c_3)} P_\omega^x(T_{B_2(0,c_3)} < T_{\Delta^c}) \right)^{n-m-k} \geq \kappa^{2 \sqrt{d}c_3} \left( 1- e^{-c_4L} \right)^{n-m-k} \\
\end{align*}
Applying this inequality, we get
\begin{align*}
\sum_{\tilde I \subset I } \min_{k \in[L, c L  ]} \min_{m \in \tilde I} P_\omega^0(X_{n-m-k} = 0) &\geq  \sum_{\tilde I \subset I} \min_{m \in \tilde I} \min_{k \in[L, c L  ]} \kappa^{2 \sqrt{d}c_3} \left( 1- e^{-c_4L} \right)^{n-m-k}\\
& \geq \sum_{\tilde I \subset I} \min_{m \in \tilde I}  \kappa^{2 \sqrt{d}c_3} \left( 1- e^{-c_4L} \right)^{n-m-L}.
\end{align*}
We choose the following partition for $I$: let 
\begin{align*}
\tilde I_j =\{ m \in \N: m \in [(n-L)/\langle v,e_1 \rangle +j\lfloor L^{\epsilon} \rfloor,(n-L)/\langle v,e_1 \rangle + (j+1)\lfloor L^{\epsilon} \rfloor)\}
\end{align*}
for $j \in J $ where $J$ is the corresponding index set. We thus have
\begin{align*}
 \sum_{\tilde I \subset I} \min_{m \in \tilde I}  \kappa^{2 \sqrt{d}c_3} \left( 1- e^{-c_4L} \right)^{n-m-L} &=  \sum_{j \in J} \min_{m \in \tilde I_j}  \kappa^{2 \sqrt{d}c_3} \left( 1- e^{-c_4L} \right)^{n-m-L} \\ 
 &\geq  \sum_{j = 1}^{\lfloor c n^{1/2}/L^\epsilon \rfloor}  \kappa^{2 \sqrt{d}c_3} \left( 1- e^{-c_4L} \right)^{ \frac{L}{\langle v,e_1 \rangle}-L + 2j L^\epsilon} \\
 & \geq C L^{-\epsilon} e^{\tilde{c} L}.
\end{align*}
Putting everything together, we get that for $\omega \in  F \cap  E_L$, for all $z \in B(-nv, cn^{1/2})$
\begin{align*}
P_\omega^z(X_n = x)  \geq C \frac{L^{d-1}}{n^{d/2}} e^{\tilde c L} = C\exp(\tilde c L-(d-1)\log L ) /n^{d/2} > e^{\hat{c}L}/n^{d/2},
\end{align*}
for some $\hat c >0$ for large $L$. Letting $L =\lceil \log u/ \hat{c} \rceil$, we conclude that 
\begin{align*}
P(\forall z \in B(-nv, cn^{1/2}), \: P_\omega^z(X_n = x) > u/n^{d/2}) \geq P( F \cap  E_{\lceil \log u/ \hat{c} \rceil}).
\end{align*}
 To finish the proof, we note that for any $L>0$, $ F$ and $ E_L$ are independent. Indeed, $ F$ is measurable with respect to $\{ \omega(x): \langle x,e_1 \rangle <   -L \}$ and $ E_L$ is measurable with respect to $\{\omega(x): x \in \Delta \}$. Since both sets do not intersect, and $P$ is an i.i.d. measure, we get $P( F \cap  E_{\lceil \log u/ \hat{c} \rceil}) = P( F)P( E_{\lceil \log u/ \hat{c} \rceil})$. Recall that $P( F) = 1- L^{-\xi(1)}> 1/2$ and $P( E_{\lceil \log u/ \hat{c} \rceil})> Ce^{-c(\log u)^d}$. Assuming $u$ is large enough such that $P( F) > 1/2$ finishes the proof.
\end{proof}
We can now finish the proof of Proposition \ref{prop: radon-approx}.
\begin{proof}[Proof of the lower bound in Proposition \ref{prop: radon-approx}]
For every $c'>0$ there exists a $C'>0$ such that
\begin{align*}
\left \{ \omega  \in \Omega : \sum_{z \in \Z^d}P_\omega^z(X_n = 0) > u \right\} \supset  \left\{\omega  \in \Omega : \forall z \in B(-nv, c'n^{1/2}), \: \: P_\omega^z(X_n = 0)  >C' u/n^{d/2} \right\}.
\end{align*}
Applying Lemma \ref{lem: smallbox-lower} for a small enough $c'$ finishes the proof.
\end{proof}

\noindent{\bf Acknowledgements.}
I would like to thank Ron Rosenthal for his support throughout all stages of this work, and Noam Berger for many interesting discussions. This reasearch was partially supported by ISF grant 771/17 and BSF grant 2018330.

\appendix
\section{Appendix}
\subsection{Proof of Proposition \ref{prop: heatkernel}} \label{sec: heatkernel-appendix}
The proof of Proposition \ref{prop: heatkernel} will be similar to the proof of \cite[Proposition 3.1]{berger2016local}, except ours will use the intersection estimates from Proposition \ref{prop: intersection} rather than their estimates. The proof is in three steps:
\begin{enumerate}
\item We first show in Lemma \ref{lem: box1} the quenched heat kernel concentrates around its average by using martingale techniques, which requires bounds for the number of intersection of two random walks. However, this method only works for the quenched probability of hitting large boxes.
\item In Lemma \ref{lem: box2} we use an induction argument to get a sub-optimal estimate for the probability of hitting boxes of all sizes.
\item To get sharp bounds for boxes of all sizes, in Lemma \ref{lem: box3} we rerun the argument in the first step, this time replacing the intersection estimates of Proposition \ref{prop: intersection} with the heat kernel bounds of Lemma \ref{lem: box2}. 
\end{enumerate} 

Fix $ j \in \N$ and define the event
\begin{align*}
B_N = B_N(j) = \left \{\forall 1 \leq k\leq N^2,  \tau_{k}-\tau_{k-1} \leq R_j(N)  \right \}.
\end{align*}
For $L \in \Z$, denote the hyperplane 
\begin{align*}
H_L = \{z \in \Z^d: \langle z, e_1 \rangle = L \}.
\end{align*} 
\begin{lemma} \label{lem: box1}
Let $d\geq 2$, and assume $P$ is uniformly elliptic, i.i.d. and satisfies condition $(T)$. For every $\theta \in (0,1]$, let $L(N,\theta)$ be the event that for every $\frac{2}{5} N^2 \leq M \leq N^2$, every $z \in \tilde {\mathcal P}(0,N)$, every $(d-1)$-dimensional cube $\Delta \subset H_M$ of side-length $N^\theta$ and every interval $I$ of length $N^\theta$
\begin{align*}
\vert P_\omega^z(X_{T_{M}}\in \Delta,T_{M} \in I, B_N )-\prob^z(X_{T_{M}}\in \Delta,T_{M} \in I, B_N ) \vert \leq CN^{\theta d}/ N^{d}.
\end{align*}
For every $\theta> (2d+1)/(2d+2)$ and $\alpha< 1 + \frac{d-1}{3d}$, there exist constants $C,c>0$ such that
\begin{align*}
P(L(N,\theta)) > 1- C\exp(-c(\log N)^{\alpha})
\end{align*}
for any $n \in \N$.
\end{lemma}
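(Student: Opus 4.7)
My strategy is to adapt the martingale concentration argument used in the proof of Proposition 3.1 of \cite{berger2016local}, with Lemma \ref{lem: intersection} providing the intersection input in place of the higher-dimensional bound used there. Fix parameters $\theta\in(0,1]$, $M\in[\tfrac{2}{5}N^2,N^2]$, $z\in\tilde{\mathcal P}(0,N)$, a $(d-1)$-dimensional cube $\Delta\subset H_M$ of side $N^\theta$, and an interval $I$ of length $N^\theta$, and set
\[
f(\omega) = P_\omega^z(X_{T_M}\in\Delta,\,T_M\in I,\,B_N),\qquad \bar f = E[f] = \prob^z(X_{T_M}\in\Delta,\,T_M\in I,\,B_N).
\]
The plan reduces the lemma to the pointwise concentration bound
\[
P\bigl(|f - \bar f| > CN^{\theta d - d}\bigr) \leq C\exp(-c(\log N)^\alpha),
\]
since a union bound over the $N^{O(1)}$ admissible tuples $(z, M, \Delta, I)$ is absorbed into a slight weakening of $\alpha$.

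For the pointwise bound, enumerate the sites $x_1, \ldots, x_K$ of $\mathcal P(0, N)$ (with $K = O(N^{d+1})$) in some fixed order and set $\mathcal F_k = \sigma(\omega(x_1), \ldots, \omega(x_k))$. The Doob martingale $\mathcal M_k = E[f \mid \mathcal F_k]$ telescopes $f - \bar f = \sum_k (\mathcal M_{k+1} - \mathcal M_k)$. By the usual resampling coupling, each increment $|\mathcal M_{k+1} - \mathcal M_k|$ is controlled by the quenched visit probability $P_\omega^z(X \text{ visits } x_k)$ times the annealed local success probability $\prob^{x_k}(X_{T_M} \in \Delta,\,T_M \in I)$, which is $O(N^{\theta d - d})$ by Lemma \ref{lem: ann-kernel2}. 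Squaring and summing, and recognizing $\sum_k P_\omega^z(X \text{ visits } x_k)^2$ as the quenched expected intersection count of two independent walks in the same environment, Lemma \ref{lem: intersection} with $n = N^2$ bounds this sum by $N^{1+2\epsilon}$ on a $P$-event of probability at least $1 - C\exp(-c(\log N)^\alpha)$ whenever $\alpha < 1 + (d-1)/(3d)$.

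Applying Freedman's inequality to $\mathcal M_K - \mathcal M_0$, with the worst-case single increment bounded via the confinement built into $B_N$ (which caps the walk length inside $\mathcal P(0, N)$ at $O(R_j(N) \cdot N^2)$ steps), converts the quadratic variation estimate into the required concentration of $f$ around $\bar f$ at scale $N^{\theta d - d}$. The threshold $\theta > (2d+1)/(2d+2)$ is what is needed for the Gaussian exponent in Freedman's bound to survive the union bound over tuples: counting $O(N^{d+1})$ starting points $z$, $O(N^2)$ hyperplanes $M$, and $O(N^{(d-1)(1-\theta) + (2-\theta)})$ cube-interval pairs, the $(\log N)^\alpha$-decay must dominate this polynomial entropy, and the stated threshold is exactly the balancing condition.

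\textbf{Main obstacle.} The most delicate step is the precise variance bound coming out of the martingale argument: it must combine Lemma \ref{lem: intersection} with the sharp annealed density estimates of Lemma \ref{lem: ann-kernel2} and an honest control of the worst-case single increment under $B_N$, since a naive combination loses the $N^{(1+2\epsilon)/2}$ factor that separates the Gaussian fluctuation scale from the target deviation. Getting this combination tight enough for Freedman's bound to beat the union-bound entropy at the prescribed threshold $\theta > (2d+1)/(2d+2)$ is the technical heart of the proof, and is where the bounded-regeneration event $B_N$ plays its crucial double role of truncating walks to $\mathcal P(0, N)$ and supplying the effective per-step increment bound.
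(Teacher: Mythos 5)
Your overall architecture (Doob martingale over the environment sites, quadratic variation controlled by the intersection bound of Lemma \ref{lem: intersection}, Azuma-type concentration, union bound over tuples) matches the paper's, but the quantitative core of your argument does not close. The increment bound you propose --- visit probability of $x_k$ times the annealed success probability $\prob^{x_k}(X_{T_M}\in\Delta,\,T_M\in I)=O(N^{\theta d-d})$ --- is both unjustified for sites $x_k$ near $H_M$ (Lemma \ref{lem: ann-kernel2} only applies to starting points deep inside the parallelogram, and the success probability for $x_k$ adjacent to $\Delta$ is order one) and, more importantly, too weak even where it holds: with it the quadratic variation is $U\lesssim N^{2\theta d-2d}\cdot N^{1+2\epsilon}$ while your target deviation is $t=N^{\theta d-d}$, so the Azuma/Freedman exponent $t^2/U\asymp N^{-1-2\epsilon}$ tends to zero and you obtain no concentration at all. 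The paper's fix is to reveal the environment only up to $H_M$, target the shifted hyperplane $H_{M+V}$ with $V=\lfloor N^{2\theta'}\rfloor$, and exploit that the resampled and unresampled laws can be coupled until the first visit to $x_k$, so that only a discrete \emph{derivative} of the annealed kernel (Lemma \ref{lem: ann-kernel1}) enters; this yields the uniform increment bound $CR_j(N)V^{-(d+1)/2}\cdot\mathtt P_k(x_k\ \text{is visited})$, hence $U\lesssim V^{-(d+1)}N^{1+\epsilon}$ against the \emph{pointwise} target $t=N^{-d}$, and the exponent $N^{2\theta(d+1)-(2d+1)}$ diverges precisely when $\theta>(2d+1)/(2d+2)$. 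The cube/interval statement then follows by summing the pointwise bound over the $N^{\theta d}$ pairs in $\Delta\times I$ and handling the slab between $H_M$ and $H_{M+V}$.

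Relatedly, your explanation of the threshold $\theta>(2d+1)/(2d+2)$ is incorrect: it is not a balance between union-bound entropy and a $(\log N)^\alpha$ decay (a stretched-logarithmic decay could never absorb a polynomial number of tuples), but the condition for the McDiarmid exponent itself to be a positive power of $N$. The final $e^{-c(\log N)^\alpha}$ failure probability comes from the complements of the intersection event of Lemma \ref{lem: intersection} and the bounded-regeneration event $B_N$, which dominate the super-polynomially small martingale failure probability.
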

\begin{proof}
Fix $j \in \N$ and $\theta'\in (0,\theta)$, and define $V_N = \lfloor N^{2\theta'} \rfloor$. Let $\mathcal F$ be the $\sigma$-algebra generated by
\begin{align*}
\{\omega(x) : x \in \mathcal P^M(0,N) \},
\end{align*}
where $\mathcal{P}^M(0,N) = \mathcal P(0,N) \cap \{x \in \Z^d : \langle x,e_1 \rangle \leq M \}$. Let $ x_1, x_2, \ldots$ be any lexicographic ordering of the vertices in $\mathcal P^M(0,N)$, with the first coordinate being the most significant. Let $\mathcal F_i$ be the $\sigma$-algebra generated by $\{ \omega(x_1), \ldots, \omega(x_i) \}$.  Fix $N \in \N$, $y \in H_{M+V_N}$ and $m \in \N$, and define the martingale 
\begin{align*}
M_i = E[P_\omega^z(X_{T_{M+V_N}} = y, T_{M+V_N} = m, B_N) \vert \mathcal F_i],
\end{align*} 
as well as $U_i = \text{esssup}(\vert M_i - M_{i-1} \vert \vert \mathcal F_{k-1})$. We will bound the martingale differences, and then finish the proof by applying McDiarmid's inequality. We first show
\begin{align}\label{eq: mart-diff}
U_i \leq C R_j(N) V_N^{-(d+1)/2} E[P_\omega^z(x_i \: \text{is visited} , B_N) \vert  \mathcal F_i ].
\end{align}
Let $\mathtt{P}_i(\cdot) =  E[P_\omega^z(\cdot, B_N) \vert \mathcal F_{i}]$. The random walk under $\mathtt{P}_{i-1}$ has quenched transition probabilities on $\{x_1,\ldots, x_{i-1} \}$ and annealed transition probabilities on $\Z^d \setminus  \{x_1,\ldots, x_{i-1} \}$, while $\mathtt P_i$ has the same distribution except that on $x_i$ it has quenched transition probability. Hence both distributions are identical on the event the random walk never hits $x_i$, and so
\begin{alignat*}{2}
\vert M_i - M_{i-1} \vert &= \vert && \mathtt P_i(X_{T_{M+V_N}} = y, T_{M+V_N} = m) - \mathtt P_{i-1}(X_{T_{M+V_N}} = y, T_{M+V_N} = m)  \vert \\
&=   \vert &&   \mathtt P_i(X_{T_{M+V_N}} = y, T_{M+V_N} = m, x_i \: \text{is visited} ) \\
& &&- \mathtt P_{i-1}(X_{T_{M+V_N}} = y, T_{M+V_N} = m, x_i \: \text{is visited})  \vert .
\end{alignat*}
Until the first hitting time of $x_i$, $\mathtt P_i$ and $\mathtt P_{i-1}$ have the same distribution, and so we can couple them until they first hit $x_i$. Since we are under the event $B_n$, after at most time $R_j(n)$ after they hit $x_i$, both random walks will never backtrack. By definition of $\mathtt P_i$ and the ordering we chose for the vertices of $\mathcal P^M(0,N)$, the random walks after this time will have the annealed distribution. From these observations, we can apply the annealed derivative estimates from Lemma \ref{lem: ann-kernel1} to get
\begin{align*}
 U_i \leq \mathtt P_i(x_i \: \text{is visited})  C R_j(N) V_N^{-(d+1)/2} ,
\end{align*}
which yields \eqref{eq: mart-diff}. Let $B(x)= \{w \in H_{\langle x,e_1 \rangle - 1}: \norm{w-x}_\infty < R_j(N)  \}$. On the event $B_N$ and $x_i$ is visited, the random walk must first visit the hyperplane behind $x_i$ at $B(x_i)$. We thus have
\begin{align*}
\mathtt P_i(x_i \:  \text{is visited}) & =E[P_\omega^z(x_i \:  \text{is visited}, B_N) \vert \mathcal F_{i}]\\
&\leq \sum_{w \in B(x_i)}E[P_\omega^z(T_{\langle x_i, e_1 \rangle - 1} = w , B_N) \vert \mathcal F_{i}]\\
&= \sum_{w \in B(x_i)}P_\omega^z(T_{\langle x_i, e_1 \rangle - 1} = w , B_N) \\
&\leq  \sum_{w \in B(x_i)}P_\omega^z(w \: \text{is visited}, B_N) .
\end{align*}
Since $\vert B(x) \vert \leq C 2^d R^d_j(N)$, and every $w \in \Z^d$ is in $B(x)$ for at most $2^d R^d_j(N)$ points $x \in \Z^d$, we have
\begin{align*}
U \coloneqq \sum_{i} U_i^2 \leq  CR^{2d+2}_j(N)\sum_{i} P_\omega^z(x_i \: \text{is visited})^2 V_N^{-(d+1)}.
\end{align*}
Fix some $\epsilon>0$, and define the event 
\begin{align*}
A_N = \{\omega \in \Omega : E^z_\omega\otimes E^z_\omega[\mathcal I_{2N^2}] \leq N^{1 + \epsilon}  \},
\end{align*}
which by Proposition \ref{prop: intersection} satisfies $P(A_N^c) \leq Ce^{-c(\log N)^\alpha}$. For large enough $N$, 
\begin{align*}
\mathcal P(0,N) \subset \{x \in \Z^d: \norm{x}_1 \leq 2N^2\},
\end{align*} 
and so the number of intersection points in $\mathcal P(0,N)$ is bounded by $\mathcal I_{2N^2}$. Hence for $\omega \in A_N $, we have
\begin{align*}
U &\leq CR^{2d+2}_j(N)\sum_{i} P_\omega^z(x_i \: \text{is visited})^2 V_N^{-(d+1)} \\
&\leq CR^{2d+2}_j(N) V_N^{-(d+1)} E^z_\omega \otimes E^z_\omega  [\mathcal I_{2N^2}] \\
&\leq R^2_{j+1}(N) V_N^{-(d+1)} N^{1+\epsilon}.
\end{align*}
Applying McDiarmid's inequality, see \cite[Theorem 3.14]{mcdiarmid1998concentration}, we have
\begin{align*}
&P \bigg( \vert E[P_\omega^z(X_{T_{M+V_N}} = y,T_{M+V_N} = m, B_N) \vert \mathcal F] \\
&\qquad -\prob^z(X_{T_{M+V_N}} = y,T_{M+V_N} = m, B_N) \vert > N^{-d} , A_N^c\bigg) \\
&\leq  \exp\left(- \frac{N^{-2d}}{R^2_{j+1}(N) V_N^{-(d+1)}N^{1+\epsilon}} \right).
\end{align*}
For $\theta' > \theta $, we have
\begin{align*}
\exp\left(- \frac{N^{-2d}}{R^2_{j+1}(N) V_N^{-(d+1)}N^{1+\epsilon}} \right) \leq \exp\left(- \frac{N^{2\theta(d+1)+\delta}}{N^{2d+1}} \right) ,
\end{align*}
where $\delta>0$ can be taken arbitrarily small since $\theta'$ can be taken arbitrarily close to $\theta$. Note that this exponent is negligible as long as $\theta > (2d+1)/(2d+2)$. Hence if we let $W(N) \subset \Omega$ be the event
\begin{align*}
 \vert E[P_\omega^z(X_{T_{M+V_N}} = y,T_{M+V_N} = m, B_N) \vert \mathcal F] -\prob^z(X_{T_{M+V_N}} = y,T_{M+V_N} = m, B_N) \vert < N^{-d}
\end{align*}
for every $z \in \tilde {\mathcal P}(0,N)$, every $y \in H_{M+V_N}\cap \mathcal P(0,2N)$ and every $m \in \N$, we conclude that $P(W^c(N)) < e^{-c(\log N)^\alpha}$.  We can now continue exactly as in the proof of \cite[Lemma 3.5]{berger2016local}. The only estimates left to show are \cite[$(3.6)$]{berger2016local} and \cite[$(3.7)$]{berger2016local}, but these hold for any $\theta'<\theta$ with probability $C\exp(-c(\log n)^\alpha)$. This finishes the proof.
\end{proof}
\begin{lemma} \label{lem: box2}
Let $d \geq 2$ and assume $P$ is uniformly elliptic, i.i.d. and satisfies $(T)$. For every $\theta \in (0,1]$ and $h \in \N$, let $D^{(\theta,h)}(N)$ be the event that for every $z \in \tilde P(0,N)$, every $\frac{1}{2}N^2 \leq M \leq N^2$, every $(d-1)$-dimensional cube  $\Delta \subset H_M$ of side-length $N^\theta$ and every interval $I \subset \N$ of length $N^\theta$
\begin{align} \label{eq: sub-time}
P_\omega^z(X_{T_M} \in \Delta, T_M \in I) \leq R_h(N)N^{\theta d}/N^d.
\end{align}
Then for every $\theta \in (0,1]$, there exists $h = h(\theta)$ such that for every $\alpha < 1+ (d-1)/3d$, there exist constants $C,c>0$ such that  $P(D^{(\theta,h)}(N)) > 1 -C\exp(-c(\log n)^\alpha)$ for every $n \in \N$.
\end{lemma}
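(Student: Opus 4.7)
The plan is to induct on the exponent, decreasing it by the factor $\rho := (2d+1)/(2d+2) < 1$ at each iteration. The base case $\theta > \rho$ follows from Lemma \ref{lem: box1}: combining the quenched--annealed comparison with the annealed upper bound from Lemma \ref{lem: ann-kernel2} yields $P_\omega^z(X_{T_M}\in\Delta, T_M\in I, B_N) \leq C N^{\theta d - d}$. The conditioning on $B_N$ is removed by choosing the parameter $j$ in the definition of $B_N$ large enough that Theorem \ref{thm: sub-slowdown} with a union bound gives $\prob(B_N^c) \leq C\exp(-c(\log N)^\alpha)$; a Markov-inequality argument in $P$ then transfers this to a quenched bound on $P_\omega^z(B_N^c)$ of super-polynomial decay, which is much smaller than the target $N^{\theta d - d}$.

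For the inductive step, I assume $D^{(\theta',h')}(N)$ holds with the required tail for some $\theta' \in (\theta, \theta/\rho]$, and deduce $D^{(\theta, h)}(N)$ with $h = h' + O(1)$. The key step is a Markov decomposition at the hyperplane $H_{M-V}$ with $V = \lfloor N^{2\theta'}\rfloor$:
\begin{align*}
P_\omega^z(X_{T_M}\in\Delta, T_M\in I) = \sum_{y,\, m_1} P_\omega^z(X_{T_{M-V}}=y,\, T_{M-V}=m_1)\, P_\omega^y(X_{T'_M}\in\Delta,\, T'_M\in I - m_1),
\end{align*}
where $T'_M$ denotes the hitting time of $H_M$ by the walk started at $y$. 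By Lemma \ref{lem: exit-ball} applied at scale $N^{\theta'}$, the sum is dominated by $(y,m_1)$ in a region $\tilde\Delta\times\tilde I$ whose transverse side and time-length are both of order $N^{\theta'}R_5(N)$, centered at the drift-preimage of $\Delta$ and at $I - V/\langle v, e_1\rangle$, respectively; the tail outside this region contributes at most $\exp(-cR_5(N))$, which is absorbed into the $R_h$ factor. Partitioning $\tilde\Delta\times\tilde I$ into $O(R_5(N)^d)$ sub-cells of size $N^{\theta'}\times N^{\theta'}$, for each sub-cell $\Delta_0 \times I_0$ the summed first factor $P_\omega^z(X_{T_{M-V}}\in\Delta_0, T_{M-V}\in I_0)$ is bounded by $R_{h'}(N)\, N^{\theta' d - d}$ by the inductive hypothesis, while the supremum of the second factor is bounded by $C\, N^{\theta d - \theta' d}$ by applying Lemma \ref{lem: box1} at scale $N_1 = N^{\theta'}$ with exponent $\theta/\theta' > \rho$ (after a union bound over the $N^{O(1)}$ possible starting points $y$). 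Multiplying and summing yields
\begin{align*}
P_\omega^z(X_{T_M}\in\Delta, T_M\in I) \leq O(R_5(N)^d)\cdot R_{h'}(N)\, N^{\theta' d - d}\cdot C\, N^{\theta d - \theta' d} \leq R_h(N)\, N^{\theta d - d}.
\end{align*}
Iterating, since $\rho < 1$, any $\theta \in (0,1]$ is reached in finitely many steps, with $h = h(\theta) = O(\log(1/\theta))$.

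The main obstacle will be the bookkeeping around orientations and translations: the region $\tilde\Delta\times\tilde I$ must be correctly centered relative to the limiting drift $v$, and Lemma \ref{lem: box1} at scale $N^{\theta'}$ must be applied uniformly over the $N^{O(1)}$ recentered parallelograms anchored at each possible starting point $y$. The union bound costs a small fraction of the exponent $\alpha$ per iteration, but finitely many iterations keep the final $\alpha$ strictly less than $1 + (d-1)/3d$, so the flexibility in the statement can absorb it. A secondary technical point is ensuring $M - V \in [\tfrac{1}{2}N^2, N^2]$ so the inductive hypothesis applies at the intermediate hyperplane; this holds for $M$ safely above $N^2/2$ since $V = N^{2\theta'} = o(N^2)$, and the narrow boundary case is handled either by slightly enlarging the admissible $M$-range in the statement or by using a different intermediate decomposition.
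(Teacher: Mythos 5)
Your proposal is correct and follows essentially the same route as the paper: a descending induction on $\theta$ driven by a two-scale Markov decomposition at an intermediate hyperplane, with Lemma \ref{lem: box1} supplying the base case for $\theta>(2d+1)/(2d+2)$, the removal of the $B_N$-conditioning via Theorem \ref{thm: sub-slowdown} and Markov's inequality, and the restriction to $O(R_5(N)^d)$ relevant sub-cells via the exit estimates, all as you describe. The only (immaterial) difference is the roles of the two legs: the paper applies the coarse base-case event $D^{(\rho,1)}(N)$ with $\rho=\theta/\theta'$ to the long first leg and the inductive hypothesis $D^{(\theta',h')}([N^{\rho}])$ to the short second leg inside the recentred sub-parallelograms, whereas you swap the two, and the resulting products agree up to $R_j$-factors absorbed into $h(\theta)$.
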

\begin{proof}
We will prove the lemma by descending induction on $\theta$. First note that if $(2d+1)/(2d+2)<\theta' <1$, then by Lemmas \ref{lem: ann-kernel2} and \ref{lem: box1} we have $P(D^{(\theta',h)}(N)) > 1 -C\exp(-c(\log n)^\alpha)$. For the inductive step, assume the statement of the lemma holds for some $\theta'$. Fix any $\theta$ satisfying $\theta' \frac{2d+1}{2d+2}<\theta<\theta'$, and let $h' = h(\theta')$ and $\rho =\theta/\theta' $. Define the event
\begin{equation*}
T(N,\rho) = \left\{\omega \in \Omega : \begin{alignedat}{2} \begin{gathered}  \forall v \in \tilde {\mathcal P}(0,N) \: P_\omega^v(X_{T_{\partial \mathcal P(v,[N^\rho])}} \not \in \partial^+ \mathcal P(v,[N^\rho])) < Ce^{-cR_5(N)}, \\[.25cm]  P_\omega^v(\vert T_{\partial \mathcal P(v,[N^\rho])} - \E^v[T_{\partial \mathcal P(v,[N^\rho])}] \vert > N^{\rho} R_j(N^{\rho})) \leq Ce^{-c(\log R_j(N))^{\alpha}/2} \end{gathered}\end{alignedat} \right \}.
\end{equation*}
Let $\varsigma$ be the time shift of the random walk, defined as $\varsigma_s(X_1,X_2,X_3,\ldots) = (X_{s+1}, X_{s+2}, \ldots)$, and also define the event
\begin{align*}
S(N) = D^{(\rho,1)}(N) \cap \bigcap_{\substack{z \in \mathcal P(0,2N) \\ s \in [-2NR_5(N),2NR_5(N)]}} \sigma_z \varsigma_s(D^{(\theta',h')}([N^\rho]))\cap T(N,\rho).
\end{align*}
In the proof of \cite[Lemma 3.6]{berger2016local}, the authors showed that $S(N) \subset D^{(\theta,h)}(N)$, and so we just have to bound $P(S^c(N))$. By the induction assumption, we only need to bound $P(T(N,\rho))$. By Lemma \ref{lem: exit-estimates} and Markov's inequality, we have for every $\alpha< 1+ (d-1)/3d$ and for every $j \in \N$
\begin{align*}
P(T(N,\rho)) \leq \exp(-c(\log N)^{\alpha(j+2)/(j+3)}/2).
\end{align*}
Letting $\alpha' < \alpha(j+2)/(j+3) < 1+ (d-1)/3d$, we have 
\begin{align*}
P(T(N,\rho)) \leq \exp(-c(\log N)^{\alpha'}/2).
\end{align*}
Since $j$ is arbitrary the result holds for any $\alpha' < 1 + (d-1)/3d$, which finishes the proof.
\end{proof}
\begin{lemma} \label{lem: box3} 
Let $d \geq 2$, and assume $P$ is uniformly elliptic, i.i.d. and satisfies $(T)$. Let $\mathcal F$ be the $\sigma$-algebra generated by $\{\omega(z): \langle z, e_1 \rangle \leq N^2 \}$. Let $\eta> 0$ such that $\eta <  2/(d-1)$, $V_N=[N^\eta]$ and define $R(N,\eta)$ to be the event that for every $z \in \tilde{\mathcal P}(0,N)$, every $y \in H_{N^2 + V_N}$ and every $m \in \N$
\begin{align*}
\vert E [ P_\omega^z(X_{T_{N^2 + V_N}}=y,T_{N^2 + V_N}=m )\vert \mathcal F ] -\prob^z(X_{T_{N^2 + V_N}}=y,T_{N^2 + V_N}=m ) \vert \leq CN^{-d}V_N^{- (d-1)/5}.
\end{align*}
Then for every $\alpha < 1+ (d-1)/3d$, there exist constants $C,c>0$ such that $P(R(N,\eta)) > 1- C\exp(-c(\log N)^\alpha)$ for every $n \in \N$.
\end{lemma}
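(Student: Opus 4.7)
The plan is to mimic the martingale construction of Lemma \ref{lem: box1} (and of Lemma 3.5 in \cite{berger2016local}), but to replace the crude intersection bound of Lemma \ref{lem: intersection} by the sharper quenched heat kernel estimates now available from Lemma \ref{lem: box2}. We reveal the environment vertex by vertex in the slab $\{x: \langle x, e_1\rangle \leq N^2\}$, bound the resulting martingale differences via an annealed derivative estimate combined with a visit-probability bound, and then conclude via McDiarmid's inequality together with a union bound over the admissible triples $(z,v,m)$.

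Concretely, enumerate the vertices $x_1, x_2, \ldots$ of $\mathcal P(0, N) \cap \{x: \langle x, e_1\rangle \leq N^2\}$ lexicographically with the first coordinate most significant, let $\mathcal F_k = \sigma(\omega(x_1), \ldots, \omega(x_k))$ so that $\mathcal F = \mathcal F_K$, and form the martingale
\begin{align*}
M_k = E\bigl[P_\omega^z(X_{T_{N^2+V}} = v,\, T_{N^2+V} = m) \,\big|\, \mathcal F_k\bigr].
\end{align*}
The same coupling argument used in Lemma \ref{lem: box1} -- two environments differing only at $x_k$ are coupled until the first visit to $x_k$ -- combined with the annealed derivative estimate from Lemma \ref{lem: ann-kernel2}(3) applied to a parallelogram of parameter $\sqrt V$ straddling $H_{N^2}$, bounds the martingale differences as
\begin{align*}
U_k := \mathrm{esssup}(|M_k - M_{k-1}| \mid \mathcal F_{k-1}) \leq C\, R_j(N)\, V^{-(d+1)/2}\, \mathtt P_k(x_k \text{ visited}),
\end{align*}
where $\mathtt P_k$ denotes the mixed (quenched-past, annealed-future) measure.

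The decisive improvement over Lemma \ref{lem: box1} comes in estimating $U := \sum_k U_k^2$. Rather than invoking the crude bound $N^{1+\epsilon}$ of Lemma \ref{lem: intersection}, we define a good event on which the conclusions of Lemma \ref{lem: box2} hold for every hyperplane $H_L$ with $L \in [N^2/2, N^2]$ and for $\theta$ chosen appropriately small; on this event one obtains a pointwise bound of the order $R_h(N)/N^{d-1}$ on $P_\omega^z(X_{T_L} = x)$ in the typical regime. Combined with the ballistic concentration of the crossing times from Lemma \ref{lem: exit-estimates}, this yields a sharpened bound on $\sum_k \mathtt P_k(x_k \text{ visited})^2$ that is, for $d \geq 3$, much smaller than $N^{1+\epsilon}$. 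Applying McDiarmid's inequality on this good event gives the deviation bound $CN^{-d}V^{-(d-1)/5}$ for each fixed $(z,v,m)$, and a union bound over $z \in \tilde{\mathcal P}(0,N)$, over $v \in H_{N^2+V} \cap \mathcal P(0, 2N)$ (polynomially many), and over $m$ in the typical crossing window $|m - \E^z T_{N^2+V}| \leq N R_5(N)$ (outside of which the contribution is already negligible by Theorem \ref{thm: sub-slowdown}) completes the argument.

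The main obstacle I anticipate is the precise bookkeeping of exponents. The prescribed factor $V^{-(d-1)/5}$ is delicate: it has to emerge from the optimal choice of the parameter $\theta$ in Lemma \ref{lem: box2}, balanced against the $V^{-(d+1)/2}$ derivative bound and the constraint $\eta < 2/(d-1)$. Making this balance work uniformly over the full range of $\eta$ -- and in particular verifying that the improvement provided by Lemma \ref{lem: box2} is strong enough in the borderline dimension $d = 2$, where intersections of independent walks are still abundant -- is the most technical aspect of the argument.
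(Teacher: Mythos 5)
Your overall scaffolding (lexicographic revelation of the slab, the coupled mixed measures $\mathtt P_k$, McDiarmid, and the idea of feeding the Lemma \ref{lem: box2} heat-kernel bounds back into the martingale-difference estimate) matches the paper's strategy, but the quantitative mechanism you propose does not close, and the gap is fatal rather than bookkeeping. You bound every martingale difference by $U_k \leq C R_j(N) V^{-(d+1)/2}\,\mathtt P_k(x_k \text{ visited})$, i.e.\ you apply the annealed derivative estimate at the uniform scale $\sqrt V$ for \emph{all} vertices $x_k$, and then hope to win by making $\sum_k \mathtt P_k(x_k \text{ visited})^2$ small. But that sum has an unavoidable lower bound: in each hyperplane $H_L$ the visit probabilities sum to essentially $1$ and are spread over at most $O\bigl(L^{(d-1)/2}\bigr)$ sites (up to $R$-factors), so $\sum_k P_\omega^z(x_k\text{ visited})^2 \gtrsim \sum_{L\le N^2} L^{-(d-1)/2}$, which is of order $N$ for $d=2$ and of order $\log N$ for $d=3$; no heat-kernel input can push it below this. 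With the uniform prefactor this gives at best $U \lesssim V^{-(d+1)}$ (times polylog for $d=3$), whereas McDiarmid needs $U \ll N^{-2d}V^{-2(d-1)/5}$; since $V=[N^\eta]$ with $\eta<2/(d-1)$, one has $V^{(3d+7)/5}\ll N^{2d+1}$ and the required inequality fails by a large polynomial margin in every dimension. So the estimate you would prove is orders of magnitude too weak, and in particular your claim that the argument works for $d\ge 3$ is not correct.

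The missing idea is that the derivative estimate must be applied at the scale of the distance $L_{x_k}=N^2+V-\langle x_k,e_1\rangle$ from $x_k$ to the \emph{target} hyperplane, giving $U_k \lesssim R_j(N)\,L_{x_k}^{-(d+1)/2}\,\mathtt P_k(x_k \text{ visited in } I(x_k,m))$, which requires first localizing the visit time to a window $I(x_k,m)$ of length $R_j L_{x_k}^{1/2}$ (via Lemma \ref{lem: exit-estimates} and a good event $G_N$) and discarding vertices outside a cone $F(v)$ around the backward characteristic from $v$. One then sums $\sum_k U_k^2$ over dyadic shells $\mathcal P^{(k)}(v)$ according to $L_{x_k}$: the far shell $L\asymp N^2$ contributes $N^{-2(d+1)}E_\omega\otimes E_\omega[I_{N^2}] \le N^{-2d-1+\epsilon}$ via Lemma \ref{lem: intersection}, while the near shells are where Lemma \ref{lem: box2} enters, yielding $\sum_{x_i\in\mathcal P^{(k)}(v)} P_\omega^z(x_i\text{ visited in }I(x_i,m))^2 \lesssim R_h N^{2\theta d}L_{x_i}/N^{2d}$ per vertex and hence a shell contribution of order $R\,(V+N^22^{-k})^{-(d-1)/2}N^{2\theta d-2d}$. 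Only the combination of the two effects --- polynomially small derivative far from the boundary, polynomially small visit probability near it --- produces $U \lesssim R\,N^{-2d}V^{-(d-1)/2}$, which is exactly what the $V^{-(d-1)/5}$ deviation threshold and the constraint $\eta<2/(d-1)$ are calibrated against. As written, your proposal omits the time localization, the cone restriction, and the shell decomposition, and therefore cannot reach the stated bound.
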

\begin{proof} 
Fix some $\theta \in (0,1)$. Let $K$ be an integer such that $2^{-K-1}N^2 \leq V_N < 2^{-K}N^2$, and for $0 \leq k \leq K$, define
\begin{align*}
\mathcal P^{(k)} &=  \{x \in \mathcal P(0,N)  : 2^{-k-1} N^2 \leq N^2 - \langle x, e_1 \rangle < 2^{-k}N^2  \},\\
\mathcal P^{(0)} &= \{x \in \mathcal P(0,N)  : N^2/2 \leq N^2 - \langle x, e_1 \rangle   \},\\
F(y) &= \{ x \in \mathcal P(0,N): \norm{x- y -\vartheta \frac{\langle x-y,e_1 \rangle}{\langle \vartheta, e_1 \rangle }}_\infty \leq R_j(N)\vert \langle y-x,e_1 \rangle\vert^{1/2}\},\\
\mathcal P^{(k)}(y) &=  \mathcal P^{(k)} \cap F(y),
\end{align*}
see Figure \ref{fig: parallelogramparabola}.
\begin{figure}[t!]  \centering
    \includegraphics[scale=0.50]{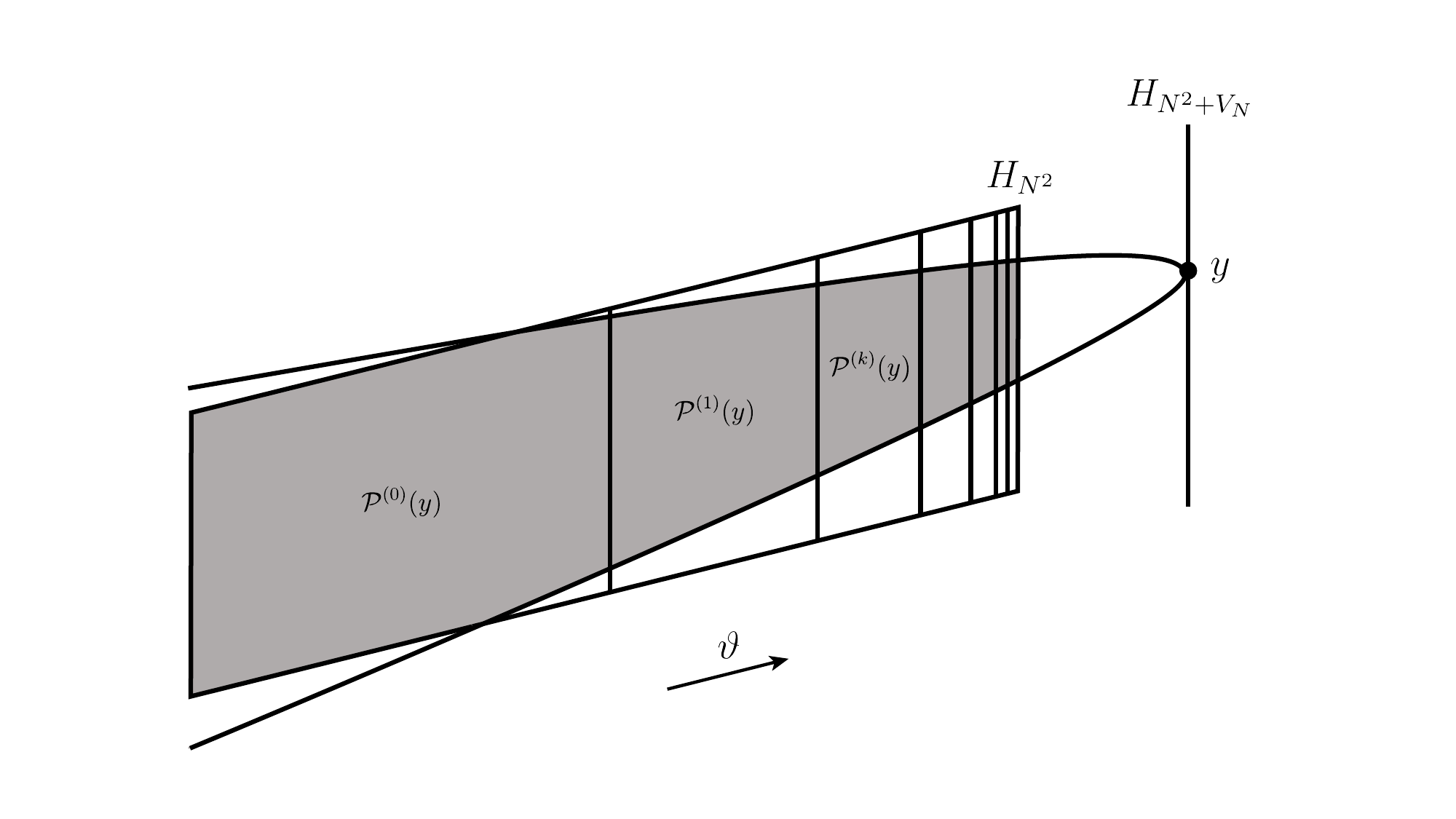}
    \caption{\textit{Since the random walk at time $n$ has variance of order $\sqrt{n}$, on the event the random walk hits $y$, it will typically only visit the shaded region.}}
    \label{fig: parallelogramparabola}
\end{figure}
 We repeat the argument from Lemma \ref{lem: box1}. Let $x_1, x_2, \ldots$ be a lexicographic ordering of the vertices of $\mathcal P(0,N)$ emphasizing the first coordinate. Let $\mathcal F_i$ be the $\sigma$-algebra generated by $\{\omega(x_1), \ldots ,\omega(x_i) \}$. Fix $N\in \N$, $y \in H_{N^2 + V_N}$ and $m \in \N$, and define the martingale
\begin{align*}
M_i = E [ P_\omega^z(X_{T_{N^2 + V_N}}=y,T_{N^2 + V_N}=m , B_N)\vert \mathcal F_{i} ],
\end{align*}
as well as $U_i = \text{esssup}(\vert M_i - M_{i-1} \vert \vert \mathcal F_{i-1}) $. As in the proof of Lemma \ref{lem: box1}, we have
\begin{align*}
U_i \leq \vert \mathtt P_{i}(X_{T_{M+V_N}} = y, T_{M+V_N} = m, x_i \: \text{is visited} ) - \mathtt P_{i-1}(X_{T_{M+V_N}} = y, T_{M+V_N} = m,x_i \: \text{is visited})  \vert,
\end{align*}
where $\mathtt{P}_i(\cdot) =  E[P_\omega^z(\cdot, B_N) \vert \mathcal F_{i}]$. For $x \in \mathcal P(0,N)$, let $L_x=N^2 + V_N - \langle x,e_1 \rangle $ and define the interval
\begin{align*}
I(x,m) =  [m - \E^z[T_{L_x} ] - R_j(L_x)L_x^{1/2},m - \E^z[T_{L_x} ] + R_j(L_x)L_x^{1/2}].
\end{align*}
We now bound $P_\omega^z(X \: \text{visits} \: x \: \text{in} \: I(x,m)^c, T_{N^2 + V_N}= m)$. On the event the random walk visits $x \in H_{N^2 + V_N - L}$ and hits $H_{N^2 + V_N}$ at time $m$, with high probability it will visit $x$ at some time in $I(x,m)$. More explicitly, 
\begin{align*}
&P_\omega^z(X \: \text{visits} \: x \: \text{in} \: I(x,m)^c, T_{N^2 + V_N}= m,B_N) \\
&=P_\omega^z(X \: \text{visits} \: x \: \text{in} \: I(x,m)^c\cap [m], T_{N^2 + V_N}= m,B_N) \\
&\leq \sum_{k  \in I(x,m)^c\cap [m]} P_\omega^z(X_k = x)P_\omega^x(T_{N^2 + V_N}= m-k) \\
&\leq \sum_{k  \in I(x,m)^c \cap [m]}P_\omega^z(X_k = x)P_\omega^x(  \vert T_{L_x} - \E^x [T_{L_x}] \vert > R_j(L_x)L_x^{1/2} ) \\
&\leq P_\omega^x(  \vert T_{L_x} - \E^x [T_{L_x}] \vert >C R_j(L_x)L_x^{1/2} ),
\end{align*}
where the first equality follows from the observation that on $B_N$, the distance traveled between two regeneration times is bounded by $R_j(N) < V_N$, and so on the event the random walk hits $x$, it must happen before it reaches $H_{N^2+V_N}$. From this calculation, we have by Markov's inequality
\begin{align*}
&P \left (P_\omega^z(X \: \text{visits} \: x \: \text{in} \: I(x,m)^c, T_{N^2 + V_N}= m) >C\exp(-c(\log R_j(N))^\alpha/2)   \right) \\
&\leq P \left ( P_\omega^x(  \vert T_{L_x} - \E^x [T_{L_x}] \vert >C R_j(L_x)L_x^{1/2} )>C\exp(-c(\log R_j(N))^\alpha/2)   \right) \\
&\leq \prob^x(  \vert T_{L_x} - \E^x [T_{L_x}] \vert >C R_j(L_x)L_x^{1/2} )C^{-1}\exp(c(\log R_j(N))^\alpha/2) \\
&\leq \tilde C \exp(-c(\log R_j(N))^\alpha/2),
\end{align*}
where the last inequality follows from Lemma \ref{lem: exit-estimates} and that $L_{x} \geq V_N $ for every $x \in \mathcal{P}(0,N)$.
Define the event
\begin{equation*}
G_N =  \left\{\omega \in \Omega: \begin{alignedat}{2}  \begin{gathered} \forall x \in \mathcal P(0,N),\forall z \in \tilde{\mathcal P}(0,N), \forall m \in [0, CN^2], \\[.25cm]
 P_\omega^z(X \: \text{visits} \: x \: \text{in} \: I(x,m)^c, T_{N^2 + V_N}= m, B_N) < C\exp(-c(\log R_j(N))^\alpha/2) \end{gathered} \end{alignedat} \right \}.
\end{equation*}
 Then by the union bound, $P(G_N^c)  < C \exp(-c(\log R_j)^\alpha/2) $. For $\omega \in G_N$, we have 
\begin{align*}
U_i \leq \vert &\mathtt P_{i}(X_{T_{M+V_N}} = y, T_{M+V_N} = m, x_i \: \text{is visited in}\: I(x_i,m) ) \\
&- \mathtt P_{i-1}(X_{T_{M+V_N}} = y, T_{M+V_N} = m,x_i \: \text{is visited in}\: I(x_i,m))  \vert + N^{-\xi(1)}.
\end{align*}
We bound $U_i$ using the same argument as in the proof of Lemma \ref{lem: box1}: until the first hitting time of $x_i$, $\mathtt P_i$ and $\mathtt P_{i-1}$ have the same distribution, and so we can couple them until the first hitting time of $x_i$. Under the event $B_n$, the regeneration times are bounded by $R_j(N)$, which combined with estimates from Lemma \ref{lem: ann-kernel1} gives us for $x_i \in F(y)$
\begin{align*}
U_i \leq \mathtt P_i( x_i \: \text{is visited in}\: I(x_i,m) ) \cdot C R_j(N)L_{x_i}^{-(d+1)/2}.
\end{align*}
To bound $U_i$ when $x_i \not \in F(y)$, we define the event
\begin{align*}
J_N =  \left\{\omega \in \Omega: \begin{alignedat}{2}  \begin{gathered} \forall z \in \tilde{\mathcal P}(0,N), \forall n \in [V_N, \infty), \\[.25cm]
P_\omega^z(\norm{X_n - \E^z[X_n]}_\infty > R_5(n) n^{1/2}) \leq C\exp(-c R_5(n))\end{gathered} \end{alignedat} \right \}.
\end{align*}
From a union bound, Markov's inequality, and Lemma \ref{lem: exit-ball}, we get that $P(J_N^c) \leq  C\exp(-c R_5(V_N))$. For $\omega \in J_N$ and $x \not \in F(y)$, by definition of $F(y)$ we have 
\begin{align*}
P_\omega^x(X_{T_{N^2 + V_N}} = y) \leq P_\omega^x( \exists n > V_N, \norm{X_n - \E^z[X_n]}_\infty > R_5(n) n^{1/2}) = N^{-\xi(1)}.
\end{align*}
Hence for $\omega \in J_N$, $U_i = N^{-\xi(1)}$ when $x_i \not \in F(y)$. Applying these bounds, as well as similar arguments from Lemma \ref{lem: box1}, we have for $\omega \in J_N$
\begin{align*}
U \coloneqq \sum_{i} U_i^2 & \leq \sum_{k=0}^K \sum_{x_i \in \mathcal P^{(k)}(v)} P_\omega^z( x_i \: \text{is visited in}\: I(x_i,m))^2 \frac{C R^{2d+2}_j }{L_{x_i}^{d+1}}  + N^{-\xi(1)}.
\end{align*}
To bound $U$, define the event $E_N = D^{(\theta,h)} \cap \{\omega \in \Omega: E^z_\omega \otimes E^z_\omega [\mathcal I_{2N^2}] < N^{1+ \epsilon} \}$, which for appropriate $h$ satisfies $P(E^c_N) \leq C\exp(-c(\log N)^\alpha)$ by Proposition \ref{prop: intersection} and Lemma \ref{lem: box2}. We first bound the $k=0$ summand. For $x_i \in \mathcal P^{(0)}(v)$, $L_{x_i} \geq N^2/2$ and so for $\omega \in E_N$ we have
\begin{align*}
&\sum_{x_i \in \mathcal P^{(0)}(v)} P_\omega^z( x_i \: \text{is visited in}\: I(x_i,m))^2 \frac{C R^{2d+2}_j }{L_{x_i}^{d+1}}\\
&\leq CR_j^{2d+2}(N) N^{-2(d+1)}   \sum_{x_i \in \mathcal P^{(0)}(v)} P_\omega^z(x_i \: \text{is visited})^2\\
& \leq CR_j^{2d+2}(N)  N^{-2(d+1)}  E^z_\omega \otimes E^z_\omega[\mathcal I_{2N^2}] \\
&\leq C R_{j+1}(N) N^{-2d-1+ \epsilon}.
\end{align*}
Note that when $x_i \in \mathcal P^{(k)}(v)$ for $k>0$, we have $V_N + 2^{-k-1}N^2 \leq L_{x_i} < V_N + 2^{-k}N^2 $. Hence when $k>0$, we have for $\omega \in  E_N$ 
\begin{align*}
&\sum_{x_i \in \mathcal P^{(k)}(v)} P_\omega^z( x_i \: \text{is visited in}\: I(x_i,m))^2 \frac{C R^{2d+2}_j }{L_{x_i}^{d+1}}  \\
&\leq \frac{C R^{2d+2}_j }{(V_N+N^2 2^{-k-1})^{d+1}} \sum_{x_i \in \mathcal P^{(k)}(v)} P_\omega^z( x_i \: \text{is visited in}\: I(x_i,m))^2 \\
&\leq \frac{C R^{2d+2}_j }{(V_N+N^2 2^{-k-1})^{d+1}} \sum_{x_i \in \mathcal P^{(k)}(v)} \frac{ R_h(N) N^{2 \theta d}L_{x_i}}{N^{2d}} \\
&\leq \frac{C R^{2d+2}_j R_h(N) }{(V_N+N^2 2^{-k-1})^{d+1}} \frac{ N^{2 \theta d}(V_N + N^2 2^{-k})}{N^{2d}}  \vert \mathcal P^{(k)}(v) \vert \\
&\leq \frac{C R^{2d+2}_j R_h(N) }{(V_N+N^2 2^{-k})^{d}} \frac{ N^{2 \theta d}}{N^{2d}} \vert \mathcal P^{(k)}(v) \vert  \\
&\leq \frac{C R^{2d+2}_j R_h(N) }{(V_N+N^2 2^{-k})^{d}}  N^{d+1}2^{-k(d+1)/2} \frac{ N^{2 \theta d}}{N^{2d}} \\
&\leq \frac{C R^{2d+2}_j  R_h(N)}{(V_N+N^2 2^{-k})^{(d-1)/2}}  \frac{ N^{2 \theta d}}{N^{2d}}.
\end{align*}
Putting both estimates together, we conclude that for $\omega \in J_N \cap E_N \cap G_N$
\begin{align*}
U &\leq C R_{j+1}(N) N^{-2d-1+ \epsilon} + \sum_{k=1}^K \frac{C R^{2d+2}_j }{(V_N+N^2 2^{-k})^{(d-1)/2}}  \frac{ N^{2 \theta d}}{N^{2d}} \\
&\leq C R_{j+1}(N) N^{-2d-1+ \epsilon} + \frac{C R_{\max \{j+1,h+1 \}} }{V_N^{(d-1)/2}}  \frac{ N^{2 \theta d}}{N^{2d}} \\
&\leq \frac{C  R_{\max \{j+1,h+1 \}} }{V_N^{(d-1)/2}}  N^{-2d},
\end{align*}
where the last inequality follows from assuming $\eta < 2/(d-1)$, and that $\theta$ can be chosen arbitrarily close to $0$. We now apply McDiarmid's inequality 
\begin{align*}
  &P \bigg( \vert E [ P_\omega^z(X_{T_{N^2 + V_N}}=y,T_{N^2 + V_N}=m ,B_N)\vert \mathcal F ] \\
 &\qquad  -\prob^z(X_{T_{N^2 + V_N}}=y,T_{N^2 + V_N}=m, B_N ) \vert > CN^{-d}V_N^{- (d-1)/5} \bigg) \\
& \leq \exp \left(-c\frac{N^{-2d}V_N^{- 2(d-1)/5}}{N^{-2d} V_N^{-(d-1)/2} }\right)  +P((J_N \cap E_N\cap G_N)^c)  \\
& \leq C\exp(-c(\log N)^{\alpha(j+2)/(j+3)}).
\end{align*}
Lemma \ref{lem: sub-slowdown} implies $\prob(B^c_N) \leq  C\exp(-c(\log R_j(N))^\alpha) $ for any $\alpha<1+ (d-1)/3d$, and so by Markov's inequality  
\begin{align*}
P\left(P_\omega(B_N^c) > C\exp(-c(\log R_j(N))^\alpha) /2 \right ) \leq C\exp(-c(\log R_j(N))^\alpha/2) .
\end{align*}
With this estimate, we conclude that $P(R(N,\eta))>1-\exp(-c(\log N)^{\alpha})$ for any $\alpha<1+ (d-1)/3d$. 
 \end{proof}
 \begin{proof}[Proof of Proposition \ref{prop: heatkernel}]
 Continue as in the proof of \cite[Proposition 3.1]{berger2016local}, replacing in \cite[Lemma 3.7]{berger2016local} with Lemma \ref{lem: box3}.
 \end{proof}
\subsection{Proof of Theorem \ref{thm: slowdown}} \label{sec: slowdown-appendix} 
We use this intersection result to provide estimates for the mean of the quenched exit distribution.
 \begin{lemma} \label{lem: quenched-mean}
Let $d \geq 3$ and assume $P$ is uniformly elliptic, i.i.d. and satisfies condition $(T)$. Fix $\epsilon>0$, and let $K(N,\epsilon)$ be the event that for every $z \in \tilde P(0,N)$,
\begin{align*}
\norm{E_\omega^z[X_{T_{\partial P(0,N)}}]-\E^z[X_{T_{\partial P(0,N)}}]}_{\infty} \leq N^\epsilon.
\end{align*}
Then $P(K(N,\epsilon)) = 1-N^{-\xi(1)}$.
\end{lemma}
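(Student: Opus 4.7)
The plan is to mimic the martingale plus McDiarmid strategy of Lemmas \ref{lem: box1} and \ref{lem: box3}, now applied to the coordinate-valued observable
$$\phi_i(\omega) := E_\omega^z\!\left[(X_{T_{\partial \mathcal P(0,N)}})_i\right] \qquad (i = 1, \ldots, d).$$
Fix $z \in \tilde{\mathcal P}(0,N)$ and $i$, enumerate the vertices of $\mathcal P(0,N)$ lexicographically as $x_1, \ldots, x_K$, set $\mathcal F_k = \sigma(\omega(x_1),\ldots,\omega(x_k))$, and consider the martingale $M_k = E[\phi_i(\omega) \mid \mathcal F_k]$, so that $M_0 = \E^z[(X_{T_{\partial \mathcal P(0,N)}})_i]$ and $M_K = \phi_i(\omega)$. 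The improvement over a direct use of Proposition \ref{prop: heatkernel} (whose density error, once integrated against a coordinate of size $\asymp N$, is too weak) will come from the upgraded intersection bound Lemma \ref{lem : intersections-upgrade}, which is essentially $n^{o(1)}$ and available here since $d\geq 3$.

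To bound the martingale difference $U_k = \operatorname{esssup}(|M_k - M_{k-1}| \mid \mathcal F_{k-1})$, I apply the strong Markov property at the hitting time $T_{x_k}$ to write
$$\phi_i(\omega) = c + P_\omega^z(T_{x_k} < T_{\partial \mathcal P(0,N)}) \cdot E_\omega^{x_k}\!\left[(X_{T_{\partial \mathcal P(0,N)}})_i\right],$$
where both $c$ and the visit probability are independent of $\omega(x_k)$. Decomposing by the first step from $x_k$ and the number of returns to $x_k$ gives the Möbius form
$$E_\omega^{x_k}\!\left[(X_{T_{\partial \mathcal P(0,N)}})_i\right] = \frac{\sum_e \omega(x_k,e)\, q_e\, \mu_e}{\sum_e \omega(x_k,e)\, q_e},$$
with $q_e = P_\omega^{x_k+e}(\text{never returns to } x_k)$ and $\mu_e = E_\omega^{x_k+e}[(X_{T_{\partial \mathcal P(0,N)}})_i \mid \text{never returns to } x_k]$, both independent of $\omega(x_k)$. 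Since this Möbius expression is a convex combination of the $\mu_e$'s, one gets $U_k \leq (\max_e \mu_e - \min_e \mu_e) \cdot P_\omega^z(x_k \text{ is visited})$. Given a polylog bound $\max_e \mu_e - \min_e \mu_e \leq R_j(N)$, Lemma \ref{lem : intersections-upgrade} yields
$$\sum_k U_k^2 \leq R_j^2(N) \cdot E_\omega^z \otimes E_\omega^z[I_{N^2}] \leq R_j^2(N)\cdot N^{\epsilon/2}$$
on a good event, and McDiarmid's inequality gives $P(|M_K - M_0| > N^\epsilon) \leq 2\exp(-cN^{3\epsilon/2}) = N^{-\xi(1)}$. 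A union bound over the polynomially many $z \in \tilde{\mathcal P}(0,N)$ and the $d$ coordinates closes the argument.

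The main obstacle is establishing the quenched polylog bound $\max_e \mu_e - \min_e \mu_e \leq R_j(N)$. The annealed analogue $|\E^{y+e}[(X_{T_{\partial \mathcal P}})_i] - \E^y[(X_{T_{\partial \mathcal P}})_i]| = O(1)$ is immediate from Lemma \ref{lem: ann-kernel2}, but transferring it to the quenched conditional means is delicate: a naive bootstrapping fails because each McDiarmid pass multiplies (not divides) the a priori oscillation bound by $N^{\epsilon/2}$, so genuine regularity input is unavoidable. My approach is to invoke a discrete Harnack-type estimate for $\omega$-harmonic functions: the map $y \mapsto E_\omega^y[(X_{T_{\partial \mathcal P(0,N)}})_i]$ is $\omega$-harmonic in $\mathcal P(0,N)^\circ$ with boundary data bounded by $NR_5(N)$, which in the bulk yields oscillation $O(\operatorname{osc}/N) = O(R_5(N))$ across unit shifts; for $x_k$ within a polylog neighborhood of $\partial \mathcal P$ one patches in the semi-local limit theorem Proposition \ref{prop: heatkernel} applied to slightly smaller parallelograms contained in $\mathcal P(0,N)$ so as to shield the Harnack application from the boundary. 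This combination produces the required $R_j(N)$ bound without circular use of the very statement being proved, after which the above McDiarmid estimate and union bound finish the proof.
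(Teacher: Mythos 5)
Your overall skeleton is the right one and matches the paper's: the paper proves this lemma by rerunning the martingale/McDiarmid argument of Lemma 4.11 of \cite{berger2012slowdown} essentially verbatim, the only new input being the upgraded intersection bound of Lemma \ref{lem : intersections-upgrade}, exactly as you set up in your first two paragraphs. The problem is the step you yourself flag as the main obstacle --- the polylogarithmic bound on the oscillation of the conditional exit expectation --- and the Harnack argument you propose for it does not work.

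No quenched Harnack inequality with deterministic constants is available for general uniformly elliptic, non-reversible i.i.d.\ environments; this is one of the standing difficulties of the model, and nothing in the paper's toolbox supplies it. Even granting an elliptic Harnack inequality, what it yields for an $\omega$-harmonic function is geometric oscillation decay, i.e.\ a H\"older-type estimate $\operatorname{osc}_{B_1}u\leq CN^{-\gamma}\operatorname{osc}_{B_N}u$ for some small $\gamma>0$, not the Lipschitz gradient bound $O(\operatorname{osc}/N)$ you assert; since the boundary data $(X_{T_{\partial\mathcal P}})_i$ oscillates by order $NR_5(N)$ (or $N^2$ for $i=1$), this would give $N^{1-\gamma}R_5(N)$ at best, which is useless. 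The correct mechanism is not quenched at all: the martingale difference compares the mixed measures $\mathtt P_k$ and $\mathtt P_{k-1}$ of Lemma \ref{lem: box1}, which are coupled until the first visit to $x_k$; after that visit, on the event $B_N$ of polylogarithmic regeneration gaps, both walks regenerate within distance $R_j(N)$ of $x_k$, and beyond the regeneration level the environment consists of unrevealed sites, hence is i.i.d.\ under both measures, so the two post-regeneration paths have the same annealed law up to a shift of the starting point of size $O(R_j(N))$. Annealed smoothness (Lemma \ref{lem: ann-kernel2} together with translation invariance) then bounds the difference of expected exit positions by $O(R_j(N))$. By recasting everything as fully quenched conditional expectations $\mu_e$ you lose access to precisely this annealed structure, and no valid substitute is supplied. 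With that coupling step restored, the rest of your computation --- $\sum_k U_k^2\leq R_j^2(N)\,E_\omega^z\otimes E_\omega^z[I_{N^2}]\leq R_j^2(N)N^{\epsilon/2}$ via Lemma \ref{lem : intersections-upgrade}, then McDiarmid and a union bound over $z$ and the coordinates --- is exactly the paper's proof.
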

\begin{proof}
Rerun the same proof of \cite[Lemma 4.11]{berger2012slowdown}, replacing the intersection bounds \cite[Lemma 4.11]{berger2012slowdown} with Lemma \ref{lem: intersections-upgrade}.
\end{proof}
With this lemma, we are ready to prove Theorem \ref{thm: slowdown}.
\begin{proof}[Proof of Theorem \ref{thm: slowdown}]
In \cite{berger2012slowdown}, Berger proved Theorem \ref{thm: slowdown} for $d \geq 4$. The only part of his proof that depends on $d$ is \cite[Proposition 4.5]{berger2012slowdown}. Lemma \ref{lem: exit-estimates} and Proposition \ref{prop: heatkernel} implies \cite[Proposition 4.5(1)]{berger2012slowdown} and \cite[Proposition 4.5(3)]{berger2012slowdown} for $d \geq 3$, respectively, and so we only need to prove \cite[Proposition 4.5(2)]{berger2012slowdown}:
\begin{align*}
P \left( \norm{E_\omega^z[X_{T_{\partial P(0,N)}}]-\E^z[X_{T_{\partial P(0,N)}}]}_{\infty} \leq R_3(N) \right) > 1- N^{-\xi(1)},
\end{align*}
 which is sharper than our estimate Lemma \ref{lem: quenched-mean}. We now explain why for the proof of the theorem, Lemma \ref{lem: quenched-mean} suffices.

Let $\overline D$ and $\overline{ \mathbb D}$ be the quenched and annealed exit distribution from $\mathcal P(0,N)$ conditioned on exiting through $\partial^+\mathcal P(0,N)$, respectively.  \cite[Proposition 4.5]{berger2012slowdown} is only used in order to show $\overline D$ and $\overline {\mathbb D}$ are $(N^{-\theta \frac{d-1}{2(d+1)}},(d+1)N^\theta)$-close for any $\theta \in (0,1/2)$-- see \cite[Definition 11]{berger2012slowdown} for the definition of two distributions to be $(\lambda,k)$-close for $\lambda < 1$ and $k \in \N$, and the remarks after the definition. Reviewing the proof, to prove that these two distribution are $(N^{-\theta \frac{d-1}{2(d+1)}},(d+1)N^\theta)$-close, it is enough to show for all $\epsilon>0$
\begin{align*}
P \left(  \norm{E_\omega^z[X_{T_{\partial P(0,N)}}]-\E^z[X_{T_{\partial P(0,N)}}]}_{\infty} \leq N^\epsilon \right) > 1- N^{-\xi(1)}.
\end{align*}
Hence Lemma \ref{lem: quenched-mean} is adequate for our purposes, and we are done with the proof.
\end{proof}
\subsection{Proof of Lemma \ref{lem: d-box}} \label{sec: d-box-appendix}
Combining Theorem \ref{thm: slowdown} and \eqref{eq: berger-slowdown}, we have for $d \geq 3$
\begin{align} \label{eq: slowdown-better}
\forall \alpha<d, \: \prob^0(\tau_1 > u) < Ce^{-(\log u)^{\alpha}}.
\end{align} 
Rerunning the proof of Proposition \ref{prop: intersection}, replacing Lemma \ref{lem: sub-slowdown} with \eqref{eq: slowdown-better}, we have for $d \geq 3$
\begin{align}\label{eq: intersection-better}
\forall \alpha<d, \: P\left(E^0_\omega\otimes E^0_\omega  [\mathcal I_n] > n^{1/2+\epsilon}  \right) < C\exp(-c (\log n)^\alpha).
\end{align}
 Recall the event $F(n)$ from Proposition \ref{prop: heatkernel}. Going over the proof of this proposition, we see that $P(F(n)^c)$ is dominated by the tail estimates of the number of intersections and regeneration times. Hence we can replace Lemma \ref{lem: sub-slowdown} and Proposition \ref{prop: intersection} with \eqref{eq: slowdown-better} and \eqref{eq: intersection-better} to get $P(F(n)^c)\leq C\exp(-c (\log n)^\alpha)$ for every $\alpha<d$.

Finally, we can rerun the arguments in the proof of \cite[Proposition 4.1]{berger2012slowdown}, replacing their quenched heat kernel bounds with Proposition \ref{prop: heatkernel}. Tracking their proof, we see that $P(E(n)^c)$ is dominated by the probability of the events in Lemmas \ref{lem: sub-slowdown} and \ref{lem: exit-ball}, and $F(n)^c$. We conclude that $P(E(n)^c)$ has the desired upper bound. 
\bibliographystyle{acm}
\bibliography{references}
\end{document}